\documentclass[11pt]{amsart}
\usepackage{amsfonts}
\usepackage{color}
\usepackage{amsthm}
\usepackage{amsmath}
\usepackage{enumerate}
\usepackage[usenames,dvipsnames]{xcolor}
\usepackage[colorinlistoftodos,prependcaption]{todonotes}
\usepackage{hyperref,xargs}
\usepackage{enumerate}
\usepackage[all]{xy}
\usepackage[T1]{fontenc}
\usepackage[utf8]{inputenc}
\usepackage{amssymb}
\usepackage[polish,english]{babel}
\usepackage[percent]{overpic}
\usepackage{xspace} 
\hypersetup{
  pdfstartview={FitH},
  colorlinks = true,
  linkcolor=blue,
  urlcolor = black,
  citecolor=red, 
  pdfpagemode = UseNone}

\theoremstyle{plain}
\newtheorem{theorem}{Theorem}[section]
\newtheorem{lemma}[theorem]{Lemma}
\newtheorem{proposition}[theorem]{Proposition}

\newtheorem{corollary}[theorem]{Corollary}
\theoremstyle{definition}
\newtheorem{definition}[theorem]{Definition}
\newtheorem{remark}[theorem]{Remark}

\newcommand{\HE}[3]{H_E^{#1}(#2,#3)}
\newcommand{\set}[1]{\left\{#1\right\}}
\newcommand{\cl}[1]{\overline{#1}}
\newcommand{\inv}[1]{\mathrm{Inv}\left(#1\right)}
\newcommand{\inte}[1]{\mathrm{int}\left(#1\right)}
\newcommand{\abs}[1]{\left\lvert{#1}\right\rvert}
\newcommand{\norm}[1]{\left\lVert{#1}\right\rVert}
\newcommand{\ip}[2]{\langle {#1} , {#2} \rangle }

\newcommand{\ev}{\mathrm{ev}}

\def\coef{\mathbb{F}}
\def\:{\colon}
\def\ss{\subset}

\def\d{\cdot}
\def\f{\varphi}
\def\r{\rho}

\def\R{\mathbb R}
\def\N{\mathbb N}
\def\G{\Gamma}
\def\tG{G_{\psi}}

\def\mN{\mathbb{N}}
\def\mR{\mathbb{R}}
\def\mH{\mathbb{H}}
\def\mZ{\mathbb{Z}}
\def\ls{\mathcal{LS}}
\newcommand{\propC}{(C)\xspace}
\def\ch{\mathrm{ch}_E}

\DeclareMathOperator{\invariant}{Inv}
\DeclareMathOperator{\interior}{int}
\DeclareMathOperator{\codim}{codim}
\DeclareMathOperator{\dime}{dim_E}

\DeclareMathOperator{\lyap}{Lyap}

\begin{document}
\subjclass[2010]{37B30, 58E05, 37C10}
\keywords{Conley index, $\ls$-flows, (local) Morse homology, Morse-Conley-Floer homology.}
\begin{sloppypar}
\let\phi\varphi

\author[Izydorek]{Marek Izydorek\textsuperscript{1}}
\address{\textsuperscript{1} Gdansk University of Technology, Faculty of Applied Physics
and Mathematics, Gabriela Narutowicza 11/12, 80-233 Gdansk, Poland}
\author[Rot]{Thomas O. Rot\textsuperscript{2}}
\address{\textsuperscript{2}Mathematisches Institut, Universit\"at zu K\"oln, Weyertal 86 - 90, 50931 K\"oln, Germany. thomas.rot@uni-koeln.de}
\author[Starostka]{Maciej Starostka\textsuperscript{1,3}}
\address{\textsuperscript{3}Institute of Mathematics, Polish Academy of Sciences, ul. \mbox{Sniadeckich 8}, 00-656 Warsaw, Poland}
\author[Styborski]{Marcin Styborski\textsuperscript{1}}
\author[Vandervorst]{Robert C.A.M. Vandervorst\textsuperscript{4}}
\address{\textsuperscript{4}Vrije Universiteit Amsterdam. Department of Mathematics, De Boelelaan 1081a, 1081 HV Amsterdam, The Netherlands. vdvorst@few.vu.nl}
\title[Homotopy invariance]{Homotopy invariance of the Conley index and local Morse homology in Hilbert spaces}

\maketitle

\begin{abstract}
In this paper we introduce a new compactness condition --- Property-\propC ---  for flows in (not necessary locally compact) metric spaces. For such flows a Conley type theory can be developed. For example (regular) index pairs always exist for Property-\propC flows and a Conley index can be defined. An important class of flows satisfying this compactness condition are $\ls$-flows. We apply $E$-cohomology to index pairs of $\ls$-flows and obtain the $E$-cohomological Conley index. We formulate a continuation principle for the $E$-cohomological Conley index and show that all $\ls$-flows can be continued to $\ls$-gradient flows. We show that the Morse homology of $\ls$-gradient flows computes the $E$-cohomological Conley index. We use Lyapunov functions to define the Morse-Conley-Floer cohomology in this context, and show that it is also isomorphic to the $E$-cohomological Conley index.
\end{abstract}

\section{Introduction}

Conley index theory is a powerful tool to study dynamical systems. In the finite-dimensional setting there is a  well-developed theory, cf.\ \cite{conley,mischaikowmrozek}.
In the last two decades a number of infinite-dimensional extensions of the theory have been constructed, see for instance~\cite{benci1991new,gip,rybakowski2012homotopy}.  These were applied to obtain of existence and multiplicity results in variational problems for ODE's and PDE's, cf.\ \cite{gip,izydorek200122,izydorek2002conley, maksymiuk2015cohomological}. 
In this paper we approach the infinite dimensional Conley index from a  different angle. The Conley index runs into two problems in the infinite-dimensional setting. 

The first problem is that the spaces are not locally compact, which makes many basic constructions in Conley theory more difficult. Following ideas of Benci~\cite{benci1991new} we introduce a new compactness condition for flows on metric spaces. Isolated invariant set of Property-\propC flows always admit an index pair, cf.~Lemma~\ref{lemma:existence_ip}. We define the (classical) Conley index of an isolated invariant set of a Property-(C) flow as the ordinary singular homology of any index pair of the isolated invariant set. 

The second problem is that flows in infinite dimensions may be \emph{strongly indefinite}: isolated invariant sets have both an infinite number of unstable directions as well as an infinite number of stable directions. This implies that the classical Conley index of a strongly indefinite Property-\propC flow often fails to contain any topological information about the flow. This is closely related to the fact that the unit ball in an infinite-dimensional Hilbert space retracts to its boundary. 

In order to address the second problem we restrict to the class of $\ls$-flows, see Definition~\ref{defn:LS}. Many naturally occurring flows are $\ls$-flows, cf. the survey~\cite{izydoreksurvey}. We proceed to probe the topology of index pairs of $\ls$-flows with $E$-cohomology instead of singular homology, as this cohomology theory is better adapted the strongly indefinite nature of the flows. This cohomology theory was introduced by Abbondandolo in~\cite{abbondandolo1997new}. We review $E$-cohomology in Appendix~\ref{app:ecoh}. Based on this cohomology theory Starostka constructed in~\cite{starostka_morse} an infinite-dimensional extension of the cohomological Conley index, called the $E$-cohomological Conley index as the $E$-cohomology of an index pair. He showed that the $E$-cohomological Conley index of an isolated invariant set is well-defined, i.e.~does not depend on the chosen index pair. 

The classical Conley index is useful because of two important properties: non-triviality and continuation invariance. Continuation invariance allows us to deform a dynamical system to one in which the Conley index can be computed. The non-triviality of the Conley index of the deformed system implies that there exists non-trivial invariant sets in both the deformed as well as the undeformed dynamical system. This principle can be used to detect and localize bounded orbits such as equilibria and periodic orbits. In this paper we prove that the $E$-cohomological index satisfies both non-triviality and continuation invariance, cf.~Proposition~\ref{prop:nontriviality} and Theorem~\ref{thm:continuation}, which is fundamental for applications. 

Another approach towards the Conley index is the intrinsic definition given in \cite{rotvandervorst} in the finite-dimensional setting. The idea in this paper is to consider Lyapunov functions for isolating neighborhoods and define the Morse-Conley-Floer index as the local Morse homology of the Lyapunov functions. The  intrinsic definition coincides with the traditional topological definition of the Conley index. The methods in \cite{rotvandervorst} do not carry over directly to the infinite-dimensional setting for the same reasons as pointed out above.
However, for isolated invariant sets of gradient $\ls$-flows a version of Morse homology~\cite{abbondandolomajerhomology} is available. The local Morse homology of an isolated invariant set is defined by the equilibria in the isolated invariant set and certain connecting orbits between the equilibria. In Section~\ref{sec:localMorse} we show that the local Morse homology of a gradient $\ls$-flow computes the $E$-cohomological Conley index. 
In Section~\ref{sec:mcf} we use the local Morse homology for $\ls$-flows to give an intrinsic approach to the Conley index  for isolated invariant sets, which will be referred to as the Morse-Conley-Floer homology.  
The latter is a variation on the existence of a Lyapunov function and Morse-Conley-Floer homology is the defined as the local Morse homology
of the obtained $\ls$-gradient flow.
We show in  Section~\ref{sec:mcf} that the intrinsically defined Morse-Conley-Floer index is isomorphic to the $E$-cohomological Conley index.

\section{Preliminaries}
\subsection{Property-(C) flows} In this section we discuss a property of flows on a Hilbert space that allows us to perform the basic constructions in Conley index theory. Let $\mH$ be a real separable Hilbert space and $\phi$ a flow on $\mH$. If there is no confusion  about which flow we use, we will also write $x \cdot t$ for $\phi(t,x)$. For a subset $U\subset \mH$ we define the invariant set of $\phi$ in $U$ to be 
$$
\invariant (U, \phi) :=  \{x \in U\,|\, \phi(t,\mR) \subset U\}. 
$$ 
A closed and bounded set $U$ is an isolating neighborhood if
$$
\invariant(U,\phi) \subset \interior U.
$$ 
A set $S$ is invariant if $\invariant(S,\phi)=S$ and if for an invariant set $S$  there exists an isolating neighborhood $U$ with $S = \inv U$ then $S$ is said to be an isolated invariant set. Let us recall the notion of index pair.
\begin{definition}
Let $S$ be an isolated invariant set. We say that a closed and bounded pair $(N,L)$ is an index pair for $S$ if
\begin{itemize}
 \item $S=\inv{\cl{N \setminus L}} \subset \interior \overline{N \setminus L}$; 
 \item $L$ is positively invariant with respect to $N$; 
 \item $L$ is an exit set, i.e.~if for $x \in N$ there exists $t > 0$ such that $x \cdot t \not\in N$ then there exists $t' \in [0,t]$ such that $x \cdot [0,t'] \subset N$ and $x \cdot t' \in L$. 
\end{itemize}
\end{definition}
An index pair of an isolating neighborhood $U$ is an index pair $(N,L)$ contained in $U$ for the isolated invariant set $S=\invariant U$. We can carry out the basic constructions of Conley theory for flows satisfying the following property:
\begin{itemize}
\item[\propC]
Each sequence $\{x_n\}_{n\in\mN}\in\mH$ for which the set $\bigcup_{n \in \mN} x_n \cdot [-n,n]$ is bounded has a convergent subsequence. 
\end{itemize}
This property is closely related to the Palais-Smale property for gradient flows, cf.~Lemma~\ref{lem:g2} below. Note that isolated invariant sets are always compact for flows satisfying Property-\propC. 
Isolated invariant sets of Property-\propC flows always admit index pairs:
Following~\cite{benci1991new}, for any pair $(U,V)$ of subsets of $\mH$,  we introduce  
\[
  G^T_\f(U):=\set{x\in H\,\,|\,\, x\d[-T,T]\subset U}=\bigcap_{\abs t\leq T}\f(t,U),
\]
and
\[
\G_{\f}^T(U,V):=\set{x\in G_{\f}^T(U)\,\,|\,\,x\d[0,T]\cap V\neq\varnothing}.
\]
For $\G_{\f}^T(U,\partial U)$ we will write $\G_{\f}^T(U)$. 
 
\begin{lemma}\cite[Thm. 1.4.]{benci1991new}\label{lemma:existence_ip}
Let $U$ be an isolating neighborhood for a Property-\propC flow $\phi$. For all $T>0$ sufficiently large $(G_{\f}^T(U),\G_{\f}^T(U))$ is an index pair. 
\end{lemma}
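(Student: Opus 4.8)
The plan is to verify, for $N:=G_\f^T(U)$ and $L:=\G_\f^T(U)\ss N$, the defining properties of an index pair for $S:=\inv U=\invariant(U,\f)$: (0) $N$ and $L$ are closed and bounded; (i) $\inv{\cl{N\setminus L}}=S$ and $S\ss\interior\cl{N\setminus L}$; (ii) $L$ is positively invariant with respect to $N$; (iii) $L$ is an exit set. Note first that $S$ is compact, since $U$ is bounded and $\f$ has Property-\propC. Boundedness in~(0) is immediate from $L\ss N\ss U$. Closedness of $N=\bigcap_{\abs t\le T}\f(t,U)$ follows because each $\f(t,U)$ is the preimage of the closed set $U$ under the continuous map $x\mapsto x\d(-t)$; closedness of $L$ follows from a routine subsequence argument: if $x_n\to x$ in $L$, choose $t_n\in[0,T]$ with $x_n\d t_n\in\partial U$, pass to $t_n\to t^\ast\in[0,T]$, and use joint continuity of the flow together with closedness of $\partial U$ to get $x\d t^\ast\in\partial U$, so $x\in L$.

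The engine of the proof, and the only place where largeness of $T$ is needed, is the compactness statement: for every neighbourhood $V$ of $S$ there is $T_0>0$ such that $G_\f^T(U)\ss V$ for all $T\ge T_0$. I would prove this by contradiction: otherwise there are $T_n\to\infty$, which we may take with $T_n\ge n$, and points $x_n\in G_\f^{T_n}(U)\setminus V$; then $\bigcup_n x_n\d[-n,n]\ss U$ is bounded, so Property-\propC yields a convergent subsequence $x_n\to x$, and since for each fixed $t$ we have $x_n\d t\in U$ for $n$ large, closedness of $U$ forces $x\d\R\ss U$, i.e.\ $x\in S\ss V$, contradicting $x_n\notin V$ with $V$ open. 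Applying this to $V=\interior U$ I fix, once and for all, a value of $T$ so large that $N=G_\f^T(U)\ss\interior U$, hence $N\cap\partial U=\varnothing$.

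Conditions (i) and (iii) are then soft and hold for any $T$. Since $N\setminus L=\set{x\,|\,x\d[-T,T]\ss U,\ x\d[0,T]\cap\partial U=\varnothing}$ and every orbit through a point $x\in S$ stays inside $S\ss\interior U$, joint continuity of $\f$ on the compact interval $[-T,T]$ puts an entire ball around $x$ into $N\setminus L$; hence $S\ss\interior(N\setminus L)\ss\interior\cl{N\setminus L}$. Also $\cl{N\setminus L}\ss U$ gives $\inv{\cl{N\setminus L}}\ss S$, while $S=\inv S\ss\inv{\cl{N\setminus L}}$ since $S\ss N\setminus L$; this is~(i). For~(iii), given $x\in N$ with $x\d t\notin N$ for some $t>0$, put $\tau:=\sup\set{s\ge 0\,|\,x\d[0,s]\ss N}$; then $\tau\le t<\infty$ and $x\d[0,\tau]\ss N$ by closedness of $N$. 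If $x\d\tau\notin L$, then $(x\d\tau)\d[0,T]\ss\interior U$ (it lies in $U$ and misses $\partial U$), so continuity gives $(x\d\tau)\d[T,T+\eta]\ss\interior U$ for small $\eta>0$, and together with $(x\d\tau)\d[\eta-T,T]\ss U$ this yields $x\d[\tau,\tau+\eta]\ss N$, contradicting the definition of $\tau$; hence $x\d\tau\in L$ and $t':=\tau$ does the job.

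The one genuine obstacle is~(ii), positive invariance, which fails for small $T$ and is exactly what the compactness statement repairs. Let $x\in L$ and let $t_0\in[0,T]$ be minimal with $x\d t_0\in\partial U$; since $x\in N\ss\interior U$ we have $t_0>0$. Suppose $x\d[0,\sigma]\ss N$. Because $N$ is disjoint from $\partial U$ whereas $x\d t_0\in\partial U$, we must have $\sigma\le t_0$; and then for every $s\in[0,\sigma]$ the point $(x\d s)\d(t_0-s)=x\d t_0$ lies in $\partial U$ with $t_0-s\in[0,T]$, while $x\d s\in N$, so $x\d s\in\G_\f^T(U)=L$. This establishes positive invariance and completes the verification.
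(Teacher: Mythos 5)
The paper itself gives no proof of this lemma --- it simply cites Benci's theorem --- so there is no argument in the paper to compare yours against. Your proof is a correct, self-contained verification, and it follows the natural line of attack (which is, as far as I recall, also roughly Benci's): use Property-\propC to show that $G_\f^T(U)\ss\interior U$ for $T$ large, and then check the index-pair axioms directly.

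A few small remarks, none of them gaps. In the positive-invariance step you write $\sigma\le t_0$; it is in fact strictly $\sigma<t_0$ (if $\sigma=t_0$ then $x\d t_0\in N\cap\partial U=\varnothing$), but since you only use $s\le t_0$ for $s\in[0,\sigma]$, the conclusion is unaffected. In the exit-set step you write the auxiliary containment as $(x\d\tau)\d[\eta-T,T]\ss U$; the object you actually need is $(x\d\tau)\d[\sigma-T,\,T+\sigma]\ss U$ for each $\sigma\in[0,\eta]$, which you get by splitting it into $(x\d\tau)\d[\sigma-T,T]\ss(x\d\tau)\d[-T,T]\ss U$ and $(x\d\tau)\d[T,T+\sigma]\ss(x\d\tau)\d[T,T+\eta]\ss\interior U$. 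Both are cosmetic. The one structural comment worth recording: your ``compactness statement'' ($G^T_\f(U)\ss V$ for large $T$, for every open $V\supset S$) is the load-bearing consequence of Property-\propC, and once $N\cap\partial U=\varnothing$ is secured the remaining axioms hold for that $T$ by soft continuity and closedness arguments. This is exactly the right way to organize the proof, and it also isolates where Property-\propC enters, which is useful given that the rest of the paper leans on variants of the same compactness argument (e.g.\ Lemma~\ref{lemma:rho}).
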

\begin{remark}\label{remark:tau}
Recall that an index pair $(N,L)$ is said to be regular if the function $\tau_N\:N\to[0,\infty]$ given by 
$$
\tau_N(x)=\begin{cases}
\sup\set{t>0\,|\,x\d [0,t]\subset N\setminus L}\qquad& x\in N\setminus L\\0 \qquad &x\in L
\end{cases},
$$ 
is continuous. As was observed in~\cite[Remark 3.6]{starostka_morse},~\cite[Theorem 5.5.13]{chang2006methods} implies that regular index pairs always exist for Property-\propC flows. 
\end{remark}

(Regular) index pairs therefore always exist for isolated invariant sets of Property-\propC flows. Moreover the homology of an index pair for $U$ is independent of the chosen index pair, cf.~\cite[Theorem 1.5]{benci1991new}, and is called the classical Conley index of the isolated invariant set. For strongly indefinite flows the Conley index often vanishes, which is why we will probe the topology of the pair $(N,L)$ with another suitable cohomology theory. 

\begin{remark}
The definitions and proofs in this section also makes sense if the Hilbert space $\mH$ is replaced with any complete metric space. 
\end{remark}

\subsection{E-cohomological Conley index}
In this section we give a definition and properties of the $E$-cohomological Conley index. For our purposes, the most important feature of $E$-cohomology theory is that there is a large class of flows such that the flow deformation is an $E$-morphism. For the rest of the paper, we fix a bounded selfadjoint linear invertible operator $L:\mH\rightarrow \mH$ for which there exists a sequence of finite-dimensional subspaces $\{E_n\}$ satisying $\overline{\bigcup_{n \in \mN} E_n} = \mH$ and $L(E_n) = E_n$. 
We will write $P_n:\mH\rightarrow \mH$ for the orthogonal projection to $E_n$. The operator $L$ gives a splitting of $\mH$ in $E^+$ and $E^-$, closed subspaces corresponding to the positive and negative spectrum of $L$ respectively. The splitting allows us to use $E$-cohomology theory, a generalized cohomology theory with a restricted set of admissible morphisms and changed dimension axiom. In Appendix~\ref{app:ecoh} we give short primer on this cohomology theory and we refer to the original paper~\cite{abbondandolo1997new} for more details. 

\begin{definition}
\label{defn:LS}
Let $F =  L + K$ be a vector field on $\mH$. We say that $F$ is an $\ls$-vector field if it is globally Lipschitz and $K$ is completely continuous, i.e.~$K$ maps bounded sets into precompact sets. A flow is an $\ls$-flow is generated by an $\ls$-vector field. 
\end{definition}

\begin{remark}
\label{rem:lipschitz}
We assume that an $\ls$-vector field is globally Lipschitz for convenience only, because $\ls$-vector fields then generate a global flows. Since we are only interested in the flow around isolated invariant sets, it would suffice to work with locally Lipschitz vector fields and local flows. All results can be translated to this setting with minor notational inconveniences. 
\end{remark}
As in~\cite{starostka_morse}, we combine $E$-cohomology theory with the Conley index to get an $E$-cohomological Conley index. To be able to use the results about the existence of index pairs, we prove:

\begin{lemma}\label{lemma:C}
Every $\ls$-flow satisfies Property-\propC.
\end{lemma}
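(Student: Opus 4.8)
The plan is to show directly that the compactness condition \propC{} holds for a flow $\f$ generated by an $\ls$-vector field $F = L + K$. So let $\{x_n\}_{n\in\mN} \subset \mH$ be a sequence such that $B := \bigcup_{n\in\mN} x_n \cdot [-n,n]$ is bounded, and we must extract a convergent subsequence. The key is the variation-of-constants (Duhamel) formula for the flow: writing $y(t) = \f(t, x_n)$, we have $y'(t) = L y(t) + K(y(t))$, hence
\[
  \f(t,x_n) = e^{tL} x_n + \int_0^t e^{(t-s)L} K(\f(s,x_n))\, ds.
\]
Evaluating at $t = -n$ and $t = n$, and recalling that $L$ is a bounded invertible selfadjoint operator (so $e^{tL}$ is a bounded operator with bounded inverse $e^{-tL}$ for each fixed $t$), one can solve for $x_n$ in terms of the values of the flow on $[-n,n]$. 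The cleanest route: note $x_n = \f(-n, \f(n,x_n))$, and apply the formula on $[0, -n]$... actually more symmetrically, from the identity above at time $t=n$ with base point $\f(-n,x_n)$, we get $\f(n,x_n) = e^{2nL}\f(-n,x_n) + \int_{-n}^{n} e^{(n-s)L} K(\f(s,x_n))\,ds$. Since $L$ has both positive and negative spectrum, neither $e^{2nL}$ nor its inverse need be bounded as $n\to\infty$, so this global approach is delicate.

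Instead, I would exploit the \emph{local} nature of the problem together with the finite-dimensional asymptotics. First, by boundedness of $B$ and the fact that $K$ maps bounded sets to precompact sets, the set $K(B)$ is precompact; let $C$ be a bound for $\|K\|$ on $B$. Fix any $T > 0$ and consider $n \geq T$. From the Duhamel formula with base point $x_n$,
\[
  \f(T,x_n) - \f(-T,x_n) = \bigl(e^{TL} - e^{-TL}\bigr) x_n + \int_{-T}^{T} e^{(T-s)L} K(\f(s,x_n))\,ds.
\]
Wait — the operator $e^{TL} - e^{-TL}$ need not be invertible on all of $\mH$. The right fix is to split $\mH = E^+ \oplus E^-$ according to the spectrum of $L$: on $E^+$ the semiflow $e^{tL}$ is an expanding isomorphism for $t>0$ and on $E^-$ it is contracting; so from $\f(T,x_n) = e^{TL}x_n + (\text{compact tail})$ projected to $E^-$, and $\f(-T,x_n) = e^{-TL}x_n + (\text{compact tail})$ projected to $E^+$, we recover the $E^-$-component of $x_n$ from $\f(T,x_n)$ and the $E^+$-component from $\f(-T,x_n)$, in each case modulo a term of the form $\int_0^{\pm T} e^{(\pm T - s)L}K(\f(s,x_n))\,ds$. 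The point is that each such integral lies in (a bounded multiple of) the closed convex hull of $\{e^{rL}v : |r|\le T, v\in K(B)\}$, which is precompact since $K(B)$ is precompact and $\{e^{rL}: |r|\le T\}$ is a norm-bounded (in fact norm-continuous) family. Hence, after passing to a subsequence, both $\f(T,x_n)$ and $\f(-T,x_n)$ converge in $\mH$ along that subsequence by a standard diagonal/Arzelà–Ascoli argument applied on the bounded set $B$ — indeed, one shows the family $t\mapsto \f(t,x_n)$ is equicontinuous and, for each fixed $t$, $\{\f(t,x_n)\}_n$ is precompact (again via Duhamel: $\f(t,x_n) = e^{tL}x_n + \text{compact}$, and bootstrapping from $n\to\infty$).

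Let me state the argument I would actually commit to, which sidesteps the sign issue. Since $x_n = \f(0,x_n)$ and $\f(1,x_n) = e^{L}x_n + \int_0^1 e^{(1-s)L}K(\f(s,x_n))\,ds$ for all $n\ge 1$, and $e^L$ is invertible, we get
\[
  x_n = e^{-L}\f(1,x_n) - e^{-L}\!\int_0^1 e^{(1-s)L}K(\f(s,x_n))\,ds.
\]
The second term ranges in a precompact set (bounded family of bounded operators applied to the precompact set $K(B)$, integrated over a compact interval), so it has a convergent subsequence. For the first term, it suffices to show $\{\f(1,x_n)\}_n$ has a convergent subsequence — but $\f(1,x_n)$ again satisfies such an identity relative to $\f(2,x_n)$, and more usefully one applies Arzelà–Ascoli on $C([0,1],\mH)$: the curves $u_n(t) = \f(t,x_n)$ satisfy $u_n'(t) = Lu_n(t) + K(u_n(t))$, so $\{u_n'\}$ is uniformly bounded (as $u_n(t)\in B$ and $L$ is bounded, $K$ bounded on $B$), hence $\{u_n\}$ is equicontinuous; and for fixed $t$, $\{u_n(t)\}$ is precompact by the Duhamel decomposition into $e^{tL}x_n$ plus a precompact piece — but this still needs $\{x_n\}$ precompact, which is circular. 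The clean non-circular statement: differentiate instead in the "compact direction". Write $v_n(t) = u_n(t) - e^{tL}x_n = \int_0^t e^{(t-s)L}K(u_n(s))\,ds$; then $\{v_n\}$ is equicontinuous and pointwise precompact (values in a fixed precompact set depending on $K(B)$ and $T$), so by Arzelà–Ascoli a subsequence of $v_n$ converges uniformly on $[-1,1]$. Then $e^{L}x_n - e^{-L}x_n = u_n(1) - u_n(-1) - v_n(1) + v_n(-1)$... and once more the operator $e^L - e^{-L}$ fails to be invertible in general.

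The honest resolution — and the main obstacle — is exactly this interplay between the non-invertibility of $e^{TL}-e^{-TL}$ and the need to control the $E^+$ and $E^-$ components of $x_n$ separately, using that the flow is run in \emph{both} time directions (this is why Property-\propC{} uses intervals $[-n,n]$ rather than $[0,n]$). I would carry out the proof by: (i) decomposing along $\mH = E^+\oplus E^-$; (ii) on $E^-$, using forward time: $P^-\f(T,x_n) = e^{TL}P^-x_n + P^-v_n(T)$ with $e^{TL}|_{E^-}$ a contraction, so $P^-x_n = e^{-TL}|_{E^-}\bigl(P^-\f(T,x_n) - P^-v_n(T)\bigr)$, and $e^{-TL}|_{E^-}$ is a \emph{bounded} operator — combined with a subsequence along which $v_n$ converges uniformly (Arzelà–Ascoli, as above) this gives convergence of $P^-x_n$ provided $P^-\f(T,x_n)$ converges, which we get by the same argument run at later times or simply by iterating; (iii) symmetrically on $E^+$ using backward time; (iv) assembling. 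A cleaner packaging, which is probably what the authors do: for each fixed $N$, the projected flow $P_N \f(t, x_n)$ lives in the finite-dimensional space $E_N$ (using $L(E_N) = E_N$), and there precompactness is automatic from boundedness; meanwhile $(I - P_N)x_n$ is controlled by $(I-P_N)$ applied to the Duhamel formula, where the $e^{tL}x_n$ term is handled by running time in the contracting direction on each of $E^{\pm}\cap(I-P_N)\mH$ and the integral term is small uniformly in $n$ because $K(B)$ is precompact and hence $(I-P_N)$ is uniformly small on $K(B)$ for $N$ large. Then a diagonal argument over $N\to\infty$ produces the convergent subsequence. I expect step (ii)/(iii) — extracting genuine convergence (not just precompactness) of the boundary values $\f(\pm T, x_n)$ without circularity, by correctly exploiting the two-sided time interval and the spectral splitting of $L$ — to be the crux; the precompactness of the Duhamel integral term and the finite-dimensional reduction are routine given that $K$ is completely continuous.
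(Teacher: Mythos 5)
You assemble the right ingredients --- the Duhamel representation $\f(t,x)=e^{tL}x+B(x,t)$ with a completely continuous remainder $B$, the spectral splitting $\mH=E^+\oplus E^-$, and the observation that the two-sided interval $[-n,n]$ is what lets one treat the two components separately --- but the argument you actually commit to does not close, and you say so yourself. Step (ii)/(iii) of your final plan recovers $P^-x_n$ from $P^-\f(T,x_n)$ by applying the bounded operator $e^{-TL}|_{E^-}$ and then needs convergence of $P^-\f(T,x_n)$, ``which we get by the same argument run at later times or simply by iterating'': this iteration never terminates, for the same reason your earlier $e^{-L}\f(1,x_n)$ attempt was circular. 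Inverting a contraction gains nothing; the missing idea is to exploit the \emph{expansion} of $e^{tL}$ in the appropriate time direction, quantitatively, via the spectral gap of $L$. The paper argues by contradiction: if $\{P^+x_n\}$ had no convergent subsequence it would contain an $\epsilon$-separated subsequence; since $0$ is isolated from the spectrum of $L$, one may choose $T$ so large that $\norm{e^{TL}(x_k-x_l)}\geq \frac{3R}{\epsilon}\norm{P^+(x_k-x_l)}>3R$, while on the other hand $\norm{e^{TL}(x_k-x_l)}\leq\norm{x_k\cdot T}+\norm{x_l\cdot T}+\norm{B(x_k,T)-B(x_l,T)}\leq 2R+\norm{B(x_k,T)-B(x_l,T)}$. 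Hence $\{B(x_n,T)\}$ would contain an infinite $R$-separated set, contradicting the precompactness of $B(\cdot,T)$ on bounded sets; the symmetric argument with $-T$ handles $P^-x_n$.

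Your alternative ``cleaner packaging'' (iv) is salvageable and is a legitimate variant (show that $\sup_n\norm{(I-P_N)x_n}\to0$ as $N\to\infty$ and combine with finite-dimensional precompactness of $\{P_Nx_n\}_n$ to get total boundedness), but it hinges on exactly the quantitative point you leave implicit behind the phrase ``running time in the contracting direction'': one needs $\norm{e^{-TL}|_{E^+}}\leq e^{-\lambda T}$ with $\lambda$ the distance from $0$ to the spectrum of $L$, so that one first picks $T$ to make $e^{-\lambda T}R$ small and only then picks $N$ to make $(I-P_N)$ uniformly small on the precompact Duhamel term. As written, neither version of your proof supplies the expansion/contraction estimate that does the real work, so the proposal has a genuine gap.
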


\begin{proof} 
Let $\{x_n\}_{n\in \mN}$ be a sequence and $R>0$ such that the set $X = \bigcup_{n \in \mN} x_n \cdot [-n,n]$ is bounded and contained in the ball $B_R(0)$. Let $P^\pm$ be the projections to $E^\pm$. If the sequence $\{x_n\}_{n\in\mN}$ has a convergent subsequence then both sequences $x^+_n:=P^+(x_n)$ and $x^-_n:=P^-(x_n)$ have convergent subsequences. 

Suppose $\{x_n^+\}_{n\in\mN}$ does not have a convergent subsequence. This implies that there exists an $N\in \mN$ and $\epsilon>0$ such that $\norm{x_k^++x_l^+}>\epsilon$ for all $k,l>N$ with $k\not=l$. Since the spectrum of $L$ is isolated from $0$, it follows that there exists $T>0$ such that 
$$
\frac{3R}{\epsilon}\norm{P^+x}\leq \norm{e^{LT}x},\quad\text{for all}\quad x\in B_{R}(0).
$$ 
An $\ls$-flow has the form $x\cdot t=e^{tL}x+B(x,t)$ for a map $B$ that maps bounded sets to precompact sets, cf.~\cite{gip}. Then, for all  $k,l>N$ with $k\not=l$ we have
\begin{align*}
3R&<\frac{3R}{\epsilon}\norm{x^+_k-x^+_l}\leq \norm{e^{TL}(x_k - x_l)}\\ &\leq \norm{x_k\cdot T} + \norm{x_l\cdot T}  + \norm{B(x_k,T) - B(x_l,T)}.
\end{align*}
It follows that $\norm{B(x_k,T) - B(x_l,T)}>R$, which contradicts the fact that $B$ maps bounded sets to precompact sets. The sequence $\{x_n^+\}_{n\in\mN}$ contains a convergent subsequence. By replacing $T$ with $-T$ in the argument above we obtain the same bound assuming $\{x_n^-\}_{n\in\mN}$ does not contain a convergent subsequence. It follows that $\{x_n\}_{n\in\mN}$ always has a convergent subsequence. 
\end{proof}

\begin{proposition}
Let $(N_1,L_1)$ and $(N_2,L_2)$ be two index pairs for an isolated invariant set $S$ of an $\ls$-flow. Then
$$H^*_E(N_1,L_1) \cong H^*_E(N_2,L_2).$$
\end{proposition}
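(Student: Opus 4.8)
The plan is to reduce to the classical (singular homology) version of this statement and then to upgrade it by observing that every map and homotopy occurring in the classical argument is admissible for $E$-cohomology. Since $\ls$-flows satisfy Property-\propC (Lemma~\ref{lemma:C}), the isolated invariant set $S$ is compact, index pairs exist (Lemma~\ref{lemma:existence_ip}), and by~\cite[Theorem 1.5]{benci1991new} the singular homology of an index pair for $S$ does not depend on the chosen index pair. The proof of that result connects any two index pairs $(N_1,L_1)$ and $(N_2,L_2)$ for $S$ by a finite chain of morphisms of pairs — passing through intermediate index pairs, in particular canonical ones of the form $(G^T_{\f}(U),\G^T_{\f}(U))$ as in Lemma~\ref{lemma:existence_ip} for suitable isolating neighborhoods $U$ and large $T$ — in which every arrow is a composition of an inclusion of subsets of $\mH$ with a ``flow-and-truncate'' map $x\mapsto x\cdot t(x)$ for some continuous time function $t(\cdot)$, and in which the composition of the two arrows relating consecutive pairs is homotopic, through a homotopy of the form $(s,x)\mapsto x\cdot(s\,t(x))$, to the relevant identity. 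The existence, continuity and local boundedness of these time functions rest on the exit-set and positive-invariance axioms of index pairs together with the compactness of $S$; I would take all of this from~\cite{benci1991new} and not reprove it.

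The new ingredient is that in the $\ls$ setting all of these maps and homotopies are admissible for $E$-cohomology. An $\ls$-flow has the form $x\cdot t = e^{tL}x + B(x,t)$, where for each fixed $t$ the operator $e^{tL}$ is bounded, invertible, and commutes with the splitting $\mH=E^+\oplus E^-$ (since $L$ does), while $B(\cdot,t)$ is completely continuous (cf.\ the proof of Lemma~\ref{lemma:C}). Hence $\f(t,\cdot)$ is an $E$-morphism — that the flow deformation of an $\ls$-flow is an $E$-morphism is one of the basic features of $E$-cohomology, recalled in Appendix~\ref{app:ecoh} — and this survives restriction to a pair and postcomposition with an inclusion, so the flow-and-truncate maps are $E$-morphisms; likewise, for any continuous $t(\cdot)$ the family $(s,x)\mapsto e^{s\,t(x)\,L}x + B(x,s\,t(x))$ is a continuous path of such maps, i.e.\ an $E$-homotopy. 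Plain inclusions are $E$-morphisms (identity plus zero perturbation). Thus the entire chain of the previous paragraph lies within the class of pairs and $E$-morphisms on which $H^*_E$ is defined.

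Applying the functor $H^*_E$ together with its homotopy invariance, each arrow of the chain then induces an isomorphism on $H^*_E$ — for consecutive pairs the two induced maps are mutually inverse, since their compositions, being $E$-homotopic to identities, induce identities — and composing these isomorphisms along the chain yields $H^*_E(N_1,L_1)\cong H^*_E(N_2,L_2)$.

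The main difficulty is bookkeeping rather than any new idea: one must check that Benci's topological chain can genuinely be realised with every arrow of the flow-and-truncate/inclusion type (so that admissibility is never lost along the way) and every contracting homotopy of the flow form, and one must confirm that any excision-type isomorphisms that appear are instances of the excision available in $E$-cohomology — which is the case here, since those excisions are induced by inclusions of subsets of $\mH$. It is worth stressing that Property-\propC alone does not suffice for this proposition: the passage to the classical Conley index works for any Property-\propC flow, but the promotion to $E$-cohomology genuinely uses the $\ls$-structure, precisely in order to guarantee that the time-$t$ flow maps are $E$-morphisms. This is essentially the argument of~\cite{starostka_morse}, recast in the Property-\propC framework.
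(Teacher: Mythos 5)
Your strategy for the regular case is sound and is essentially the one the paper relies on: for \emph{regular} index pairs the result is exactly \cite[Prop.~3.4]{starostka_morse}, whose proof runs the Benci-style chain of inclusions and flow-and-truncate maps and checks, as you do, that for an $\ls$-flow every such map is an $E$-morphism in the sense of Remark~\ref{remark:extend} and every contracting homotopy is an $E$-homotopy (compare Lemma~\ref{lemma:V}, where precisely these maps appear).

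The gap is in your claim that ``the existence, continuity and local boundedness of these time functions rest on the exit-set and positive-invariance axioms of index pairs together with the compactness of $S$.'' Continuity of the exit-time function $\tau_N$ is \emph{not} a consequence of the index pair axioms; it is by definition what it means for the pair to be regular (Remark~\ref{remark:tau}), and the proposition is stated for arbitrary index pairs. For a non-regular pair $(N,L)$ the map $x\mapsto x\cdot\tau_N(x)$ need not be continuous, so your chain of $E$-morphisms cannot even be written down, and no amount of bookkeeping inside Benci's argument repairs this. The paper closes this gap with a step that has no counterpart in your proposal: by Salamon's regularization \cite[Lem.~5.3, Rem.~5.4]{Salamon_css} one finds a decreasing sequence of regular index pairs $(N,L^m)$ with $\bigcap_{m}L^m=L$, and then the continuity property of $E$-cohomology (Lemma~\ref{lemma:continuity}) --- a \v{C}ech/Alexander--Spanier-type limit axiom, not a formal consequence of functoriality and homotopy invariance --- yields $H^*_E(N,L)\cong H^*_E(N,L^m)$, reducing to the regular case. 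You need to add this reduction (or an equivalent one) for the argument to cover the statement as given.
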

\begin{proof}
If the index pairs are regular, then~\cite[Prop. 3.4]{starostka_morse} states that there is an isomorphism of $E$-cohomological Conley indices. Now suppose that $(N,L)$ is an index pair that is not regular. Following Salamon~\cite[Lem. 5.3, Rem. 5.4]{Salamon_css} we can find a sequence of regular index pairs $\{(N,L^m)\}$ such that $L^m \subset L^n$ if $n \leq m$ and $\bigcap_{m \in \mN}L^m = L$. By the continuity property of $E$-cohomology, see Lemma~\ref{lemma:continuity}, we conclude that $H^*_E(N,L) \cong H^*_E(N,L^m)$. Therefore all index pairs of a given isolated invariant set have isomorphic $E$-cohomology groups.
\end{proof} 

\begin{definition}
Let $U$ be an isolating neighborhood for an $\ls$-flow $\phi$ with $S = \invariant (U,\phi)$. The $E$-\textit{cohomological Conley index} is defined as
$$\ch(U,\phi) := H^*_E(N,L).$$
for any index pair $(N,L)$ of $S$ contained in $U$. This does not depend on the chosen isolating neighborhood $U$ of $S$ and we will define the Conley index of an isolated invariant set to be $\ch(S,\phi):=\ch(U,\phi)$ for any isolating neighborhood. We omit $\phi$ from the notation if no confusion can arise.
\end{definition}

A non-zero $E$-cohomological Conley index detects isolated invariant sets.
\begin{proposition}[Non-triviality]
\label{prop:nontriviality}
Let $U$ be an isolating neighborhood of an $\ls$-flow $\phi$. If $\ch(U,\phi) \neq 0$ then $S:=\invariant(U,\phi) \neq \varnothing$.
\end{proposition}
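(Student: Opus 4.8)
The plan is to prove the contrapositive: assuming $S:=\invariant(U,\phi)=\varnothing$, I will show that $\ch(U,\phi)=0$. Since the $E$-cohomological Conley index is computed from an arbitrary index pair contained in $U$ and, as established above, does not depend on that choice, it suffices to exhibit one index pair $(N,L)$ contained in $U$ with $H^*_E(N,L)=0$.

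The cleanest candidate is the empty pair $(\varnothing,\varnothing)$: all three conditions in the definition of an index pair hold vacuously for $S=\invariant(\cl{\varnothing\setminus\varnothing})=\varnothing$, and $\varnothing\subset U$, so $H^*_E(\varnothing,\varnothing)=0$ already yields the conclusion. To avoid any reliance on a degenerate pair, however, I would instead produce a genuinely constructed index pair that happens to be empty, using Property-\propC. Concretely, I would show that $G_{\phi}^T(U)=\varnothing$ for all sufficiently large $T$. The sets $G_{\phi}^T(U)=\bigcap_{\abs{t}\leq T}\phi(t,U)$ are closed (as $U$ is closed) and decrease in $T$, with $\bigcap_{T>0}G_{\phi}^T(U)=\invariant(U,\phi)=\varnothing$. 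If $G_{\phi}^n(U)$ were nonempty for every $n\in\mN$, choosing $x_n\in G_{\phi}^n(U)$ makes $\bigcup_{n\in\mN}x_n\cdot[-n,n]\subset U$ bounded, so Property-\propC produces a convergent subsequence $x_{n_k}\to x$; for each fixed $T$ and $k$ large we have $x_{n_k}\cdot[-T,T]\subset U$, and closedness of $U$ together with continuity of $\phi$ gives $x\cdot[-T,T]\subset U$ for all $T$, so $x\in\invariant(U,\phi)=\varnothing$, a contradiction. Hence $G_{\phi}^{T}(U)=\varnothing$ for all $T$ beyond some threshold, and a fortiori $\G_{\phi}^T(U)\subset G_{\phi}^T(U)=\varnothing$.

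Finally, picking $T$ large enough that both this emptiness holds and Lemma~\ref{lemma:existence_ip} applies, the pair $(G_{\phi}^T(U),\G_{\phi}^T(U))=(\varnothing,\varnothing)$ is an index pair for $S$ contained in $U$, so $\ch(U,\phi)=H^*_E(\varnothing,\varnothing)=0$, contradicting $\ch(U,\phi)\neq 0$; therefore $S\neq\varnothing$.

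The only nontrivial step is the compactness argument showing that $G_{\phi}^T(U)$ is eventually empty, and this is exactly where Property-\propC is indispensable: in a non-locally-compact Hilbert space a decreasing family of nonempty closed bounded sets can have empty intersection, so without the compactness hypothesis one could draw no conclusion. Everything else — verifying the index-pair axioms for the empty pair and the vanishing of the $E$-cohomology of the empty pair, which is immediate from the axioms recalled in Appendix~\ref{app:ecoh} — is formal.
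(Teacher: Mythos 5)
Your proof is correct and follows essentially the same route as the paper: argue the contrapositive by exhibiting, for $S=\varnothing$, a trivial index pair in $U$ whose $E$-cohomology vanishes, and invoke independence of the index from the choice of pair. The paper does this in one line by taking $N=L=U$ (so $H_E^*(U,U)=0$), which sidesteps both the degenerate empty pair and your (valid but unnecessary) Property-\propC argument that $G_{\phi}^T(U)=\varnothing$ for large $T$.
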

\begin{proof}
If $S=\varnothing$ then $N=L=U$ is an index pair in $U$ and $H_E(U,U)=0$.
\end{proof}

\begin{definition}
Two $\ls$-flows $\phi_0$ and $\phi_1$ are said to be related by continuation in $U$ if there exists a continuous family $H: [0,1] \times \mH\times\mR\to \mathbb{H}$ with
$$H(0,x,t) = \phi_0(x,t) \quad \text{and} \quad H(1,x,t) = \phi_1(x,t) \quad \text{for all}\quad  (x,t)\in \mH\times \mR,$$ 
such that each $H(s,\cdot,\cdot)$ is an $\ls$ flow with isolating neighborhood $U$.
\end{definition}

In Section~\ref{sec:continuation} we prove the continuation principle for the $E$-cohomological Conley index of $\ls$-flows. 
\begin{theorem}[Continuation principle]
\label{thm:continuation}
If two $\ls$-flows $\phi_0$ and $\phi_1$ are related by continuation in $U$ then
 $$\ch(U,\phi_0) \cong \ch(U,\phi_1).$$

\end{theorem}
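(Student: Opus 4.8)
The plan is to reduce the statement to a local one in the parameter and then pass to a parametrized flow. Since isomorphism of $E$-cohomology is transitive and $[0,1]$ is compact and connected, it suffices to prove that every $s_0\in[0,1]$ has a relative neighborhood $J=[s_0-\varepsilon,s_0+\varepsilon]\cap[0,1]$ on which $s\mapsto\ch(U,H(s,\cdot,\cdot))$ is constant up to isomorphism; one then covers $[0,1]$ by finitely many such intervals and composes the isomorphisms along the overlaps. Fix $s_0$; the number $\varepsilon>0$ will be shrunk finitely many times below.

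On the complete metric space $J\times\mH$ consider the product flow $\Psi(t,(s,x)):=(s,H(s,x,t))$. It is continuous, its $s$-coordinate is an integral of motion, and so each slice $\{s\}\times\mH$ is invariant with $\Psi$ restricting there to $H(s,\cdot,\cdot)$. The set $J\times U$ is closed and bounded, and
$$\invariant(J\times U,\Psi)=\bigcup_{s\in J}\{s\}\times\invariant(U,H(s,\cdot,\cdot))\ \subset\ J\times\interior U,$$
which is the interior of $J\times U$ in $J\times\mH$ (note $J$ is open in itself); hence $J\times U$ is an isolating neighborhood for $\Psi$. Adapting the proof of Lemma~\ref{lemma:C}, now using the compactness of $J$ and the continuous dependence of $\ls$-flows on the parameter — this is the point at which one needs the completely continuous perturbations to form a compact family over the compact interval $J$ — one checks that $\Psi$ satisfies Property-\propC. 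As noted after Lemma~\ref{lemma:existence_ip}, the constructions of Section~2 are valid in any complete metric space, so Lemma~\ref{lemma:existence_ip} provides, for $T$ large, an index pair $(\mathcal N,\mathcal L):=\bigl(G^T_\Psi(J\times U),\Gamma^T_\Psi(J\times U)\bigr)$ for $\Psi$ in $J\times U$. To apply $E$-cohomology, view $\mathcal N$ and $\mathcal L$ as subsets of $\mR\times\mH$ equipped with the operator $\mathrm{id}_\mR\oplus L$, approximating subspaces $\mR\oplus E_n$ and splitting $(\mR\oplus E^+)\oplus E^-$: the extra direction is finite-dimensional, so every hypothesis on $L$ persists and $H^*_E(\mathcal N,\mathcal L)$ is defined.

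Because the boundary of $J\times U$ in $J\times\mH$ is $J\times\partial U$, one computes $G^T_\Psi(J\times U)=\bigcup_{s\in J}\{s\}\times G^T_{H(s)}(U)$, and similarly for $\Gamma^T_\Psi$. Thus the slice $(\mathcal N_s,\mathcal L_s):=(\mathcal N,\mathcal L)\cap(\{s\}\times\mH)$ is $\bigl(\{s\}\times G^T_{H(s)}(U),\ \{s\}\times\Gamma^T_{H(s)}(U)\bigr)$, which by Lemma~\ref{lemma:existence_ip} applied to the $\ls$-flow $H(s,\cdot,\cdot)$ — shrinking $\varepsilon$ so that a single value of $T$ works for all $s\in J$, possible by the same parametrized compactness — is an index pair for $\invariant(U,H(s,\cdot,\cdot))$ in $U$. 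Hence $\ch(U,H(s,\cdot,\cdot))\cong H^*_E(\mathcal N_s,\mathcal L_s)$, and it remains only to show that for every $s\in J$ the slice inclusion $(\mathcal N_s,\mathcal L_s)\hookrightarrow(\mathcal N,\mathcal L)$ induces an isomorphism on $H^*_E$. Granting this for $s$ and for $s_0$ gives $\ch(U,H(s,\cdot,\cdot))\cong H^*_E(\mathcal N,\mathcal L)\cong\ch(U,H(s_0,\cdot,\cdot))$, the local statement.

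This last isomorphism is where I expect the real work to lie. An inclusion of subsets is an admissible $E$-morphism, so the map is defined; the issue is that the total index pair of the product flow must be shown to carry the same $E$-cohomology as any one of its slices, and the flow $\Psi$ itself does not help here since it fixes $s$ — the parameter direction has to be collapsed using the continuation family $H$. I would construct a deformation of $(\mathcal N,\mathcal L)$ onto $(\mathcal N_{s_0},\mathcal L_{s_0})$ inside $\mR\times\mH$ that interpolates the parameter toward $s_0$ while using the flow to stay inside the index pair, and then verify — level by level in the $E$-filtration, which is a finite bookkeeping here since the only new directions come from the finite-dimensional factor $J$ — that the maps involved are admissible $E$-morphisms, so that homotopy invariance of $E$-cohomology (cf.\ Appendix~\ref{app:ecoh}) applies. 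A conceptually cleaner alternative, if the strongly indefinite $\ls$-setting admits it, is to bypass the Benci pairs and work with an isolating block $B\subset U$ for $H(s_0,\cdot,\cdot)$ having no boundary tangencies: then for $\varepsilon$ small the splitting of $\partial B$ into strictly inward and strictly outward points is unchanged for all $s\in J$, so $B$ together with its exit set is literally a common index pair for every $H(s,\cdot,\cdot)$, $s\in J$, and $\ch(U,H(s,\cdot,\cdot))=H^*_E(B,B^-)$ is manifestly independent of $s$. In either route the obstacle is the same in spirit: obtaining uniform, parametrized control of the index-pair construction — matching exit sets and interior conditions uniformly in $s$, respectively producing such robust blocks; everything else is a parametrized repetition of the material in Section~2.
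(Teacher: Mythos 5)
You take a genuinely different route from the paper: instead of comparing index pairs of the two flows directly, you pass to the product flow $\Psi$ on $J\times\mH$ and hope to squeeze a single index pair $(\mathcal N,\mathcal L)$ down onto its slices. The step you flag yourself --- that each slice inclusion $(\mathcal N_s,\mathcal L_s)\hookrightarrow(\mathcal N,\mathcal L)$ is an isomorphism on $H^*_E$ --- is not a peripheral detail but the entire content of the theorem, and your proposal does not supply it. The flow $\Psi$ fixes $s$, so $\Psi$-orbits give no mechanism to collapse the parameter direction; the deformation you would need must use the continuation family $H$ to drift the $s$-coordinate toward $s_0$ while remaining inside $\mathcal N$, respecting the exit set $\mathcal L$, and, crucially in this $\ls$-setting, it must be verified to be an admissible $E$-morphism in the sense of Remark~\ref{remark:extend}, i.e.\ of the form $M(x)x+U(x)$ with $U$ completely continuous, $M(x)E^-=E^-$, and compact-valued time reparametrization. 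None of this is carried out. Your fallback of a transversal isolating block $B$ whose entrance/exit splitting is robust in $s$ is also not available: the existence results at hand for Property-\propC flows (Lemma~\ref{lemma:existence_ip} and the Salamon-type regularization Lemma~\ref{lemma:regularization}) produce regular index pairs, not blocks with no boundary tangencies, and in the strongly indefinite infinite-dimensional setting there is no analogue of Conley's block-existence theorem to invoke. A smaller technical wrinkle is that the vector field generating $\Psi$ has linear part $0\oplus L$, which is not invertible, so $\Psi$ is not an $\ls$-flow in the paper's sense; one can patch this by absorbing the $\mR$-factor into the compact part, but it must be addressed.

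The paper instead avoids the product flow entirely. It fixes nested Benci-type pairs $(G^T_\phi(U),\Gamma^T_\phi(U))$ and $(G^{3T}_\phi(U),\Gamma^{3T}_\phi(U))$ for $\phi$, and $(G^{2T}_\psi(U),L_\beta)$, $(G^{4T}_\psi(U),\Gamma^{4T}_\psi(U))$ for $\psi$, proves a comparison lemma for nested index pairs (Lemma~\ref{lemma:V}) in which the relevant maps are explicitly flow-induced --- hence visibly $E$-morphisms with uniformly bounded stopping times --- and then exhibits a homotopy-commutative diagram of five such maps $g, f_1, \xi, \tilde g, f_2$ between the two flows' index pairs. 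A two-out-of-three argument, using that $g^*$ and $\tilde g^*$ are isomorphisms by Lemma~\ref{lemma:V}(v), forces $\xi^*$ to be one. That direct-map strategy localizes precisely the difficulty your parametrized approach leaves open: making the comparison maps admissible in the $E$-category and proving they induce isomorphisms. Your opening reduction --- cover $[0,1]$ by finitely many intervals and compose --- is correct and matches how the paper's local hypotheses (A1)--(A3) relate to the global statement, but the local step is where the proof lives, and it is missing.
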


\section{Proof of the Continuation Principle}
\label{sec:continuation}

Throughout the current section we will assume that the given flows are $\ls$-flows, hence can be written in the form form 
$$\f(t,x)=e^{tL}x+U(t,x).$$
To prove the continuation principle we will make use of maps induced by $\ls$-flows of the form 
$$
\Psi(x):=\phi(\tau(x),x)=e^{\tau(x)L}x+U(\tau(x),x),
$$
for a continuous function $\tau:X\rightarrow \mR$ with compact image. Such maps induce maps in $E$-cohomology, see Remark~\ref{remark:extend}.

\begin{lemma}\label{lemma:V}
  Suppose that $(N_1,L_1)$ is a regular index pair for $\f$ with the exit time function $\tau_{N_1}$ and let $(N_2,L_2)$ be another index pair such that $N_2\ss \cl{N_1\setminus L_1}$ and $\inv{\cl{N_1\setminus L_1}}=\inv{\cl{N_2\setminus L_2}}$. Assume that there is a $T_0>0$ such that $x\d[0,T_0]\cap L_1\neq\varnothing$ for all $x\in L_2$ (cf.~Figure \ref{fig:assumptionindex}). Set $Q:=(L_2\d\R_{\geq0}\cap N_1)\cup L_1$. Then:
\begin{enumerate}[(i)]
  \item The inclusion $(N_2\cup Q,L_1)\hookrightarrow (N_2\cup Q,Q)$ induces an isomorphism. 
  \item The inclusion $i\:(N_2,L_2)\hookrightarrow(N_2\cup Q,Q)$ induces an isomorphism;
  \item There is $T'>0$ such that $x\d[0,T']\cap (N_2\cup Q)\neq\varnothing$ for all $x\in N_1$;
  \item The inclusion $j\:(N_2\cup Q,L_1)\hookrightarrow(N_1,L_1)$ induces an isomorphism.
  \item The map $g\:(N_2,L_2)\to(N_1,L_1)$ defined by
\[
g(x)=\left\{
         \begin{array}{ll}
           x\d T_0 & \hbox{if $x\d[0,T_0]\ss N_1\setminus L_1$} \\
           x\d \tau(x) & \hbox{otherwise}
         \end{array},
       \right.
\]
induces an isomorphism.
\end{enumerate}
\end{lemma}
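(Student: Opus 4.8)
The plan is to prove the five statements in the order given, exploiting two mechanisms throughout: long exact sequences of triples of pairs, and deformations defined by following the flow. The latter are usable because a map or homotopy of the form $x\mapsto\varphi(\tau(x),x)$ with $\tau$ continuous and bounded is an admissible $E$-morphism (Remark~\ref{remark:extend}), so the substantive work is to produce such time-functions $\tau$ (continuity being the subtle point) and to check that the resulting maps land in the intended sets. Property-\propC (Lemma~\ref{lemma:C}) is used repeatedly to get compactness of limit orbits, and regularity of $(N_1,L_1)$ is used to keep the time-functions continuous.

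For (i), I would note that the inclusion in question induces exactly the map $H^*_E(N_2\cup Q,Q)\to H^*_E(N_2\cup Q,L_1)$ in the long exact sequence of the triple $(N_2\cup Q,Q,L_1)$, so it is enough to prove $H^*_E(Q,L_1)=0$; for this I would retract $Q$ onto $L_1$ along the flow. Each point of $Q\setminus L_1$ is a forward translate $y\d s\in N_1$ of some $y\in L_2$, and the hypothesis $y\d[0,T_0]\cap L_1\neq\varnothing$ together with positive invariance of $L_1$ in $N_1$ forces its forward orbit to meet $L_1$; by regularity of $(N_1,L_1)$ the first hitting time $\theta$ is continuous there and equals $\tau_{N_1}$, so $h_\lambda(q)=q\d(\lambda\theta(q))$ stays in $Q$ (the orbit segment remains in $N_1$ since $L_1$ is the exit set of $N_1$, hence in $L_2\d\R_{\geq0}\cap N_1\subset Q$), fixes $L_1$, and contracts $Q$ onto $L_1$. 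For (ii) I would use the excision property of $E$-cohomology: one checks $N_2\cap Q=L_2$ (a point of $N_2\cap Q$ is either in $N_2\cap L_1$ or a forward translate of a point of $L_2$ lying in $N_2\subset\cl{N_1\setminus L_1}$, and positive invariance of $L_1$ and of $L_2$ places it in $L_2$), so that $(N_2\cup Q,Q)$ is obtained from $(N_2,L_2)$ by gluing $Q$ along $L_2$, and excising $Q\setminus N_2$ identifies $H^*_E(N_2\cup Q,Q)$ with $H^*_E(N_2,L_2)$; as usual one may have to thicken $L_2$ slightly to meet the excisiveness hypothesis.

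For (iii) I would argue by contradiction using Property-\propC: if there were $x_n\in N_1$ with $x_n\d[0,n]\cap(N_2\cup Q)=\varnothing$, then $x_n\d[0,n]\subset N_1\setminus L_1$ (the segment stays in $N_1$ because $L_1\subset Q$ is the exit set, and avoids $L_1\subset Q$); re-centering, the midpoints $x_n\d(n/2)$ carry orbit segments of length $n$ inside the bounded set $N_1$, so Lemma~\ref{lemma:C} yields a subsequential limit $\gamma_0$ whose entire orbit lies in $\cl{N_1\setminus L_1}$, whence $\gamma_0\in\inv{\cl{N_1\setminus L_1}}=\inv{\cl{N_2\setminus L_2}}\subset\interior\cl{N_2\setminus L_2}$; but $x_n\d(n/2)\notin N_2\supset\cl{N_2\setminus L_2}$, contradicting that the limit $\gamma_0$ lies in the open set $\interior\cl{N_2\setminus L_2}$. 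For (iv) I would use (iii) to define, for $x\in N_1$, the first entry time $\sigma(x)\in[0,T']$ of the orbit of $x$ into $N_2\cup Q$; the orbit stays in $N_1$ up to time $\sigma(x)$ because $L_1\subset N_2\cup Q$ is the exit set, and $\sigma$ is continuous (again by regularity of $(N_1,L_1)$). Then $r_\lambda(x)=x\d(\lambda\sigma(x))$ is a strong deformation retraction of $N_1$ onto $N_2\cup Q$ fixing $N_2\cup Q\supset L_1$, so $H^*_E(N_1,N_2\cup Q)=0$ and the long exact sequence of the triple $(N_1,N_2\cup Q,L_1)$ shows that $j$ is an isomorphism.

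For (v): regarding $g$ as a map $(N_2,L_2)\to(N_1,Q)$ via $(N_1,L_1)\hookrightarrow(N_1,Q)$, I would show it is homotopic through maps of pairs to the plain inclusion $(N_2,L_2)\hookrightarrow(N_1,Q)$ by the flow-homotopy $G_\lambda(x)=x\d(\lambda\tau(x))$, where $\tau$ is the time-function occurring in the definition of $g$; each $G_\lambda$ sends $N_2$ into $N_1$ and $L_2$ into $Q$ since the relevant forward orbit segments stay in $N_1$ and are translates of points of $L_2$. The inclusion $(N_2,L_2)\hookrightarrow(N_1,Q)$ factors as $(N_2,L_2)\xrightarrow{i}(N_2\cup Q,Q)\hookrightarrow(N_1,Q)$, the first map an isomorphism by (ii) and the second because $r_\lambda$ from (iv) retracts $(N_1,Q)$ onto $(N_2\cup Q,Q)$; and $(N_1,L_1)\hookrightarrow(N_1,Q)$ is an isomorphism by the long exact sequence of $(N_1,Q,L_1)$ together with $H^*_E(Q,L_1)=0$ from (i). Composing these isomorphisms forces $g$ to induce an isomorphism. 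The main obstacle in all of this is the continuity and well-definedness of the various flow-defined retractions and homotopies: first-hitting and first-entry times are only semicontinuous in general, and orbits may leave and re-enter $N_1$, so one must use the regularity of $(N_1,L_1)$ (continuity of $\tau_{N_1}$) and the positive invariance of $L_1$ and $L_2$ at every step; once the maps are seen to be continuous and to land in the right sets, their admissibility as $E$-morphisms is automatic from Remark~\ref{remark:extend}.
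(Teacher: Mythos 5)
Your overall strategy — flow-defined deformations, excision, Property-\propC compactness — is the same as the paper's, and parts (i), (ii), (iii) match the published argument in spirit (the paper handles (i) by a single weak deformation retraction of the pair $(N_2\cup Q,Q)$ rather than through the exact sequence of the triple, and (v) by the single factorization $g=j\circ\Phi(1,\cdot)\circ i$, but the ideas are the same). The genuine problem is in part (iv). You define $\sigma(x)$ as the first entry time of the orbit of $x$ into the closed set $N_2\cup Q$ and assert that $\sigma$ is continuous ``again by regularity of $(N_1,L_1)$''. Regularity of $(N_1,L_1)$ gives continuity of the first exit time $\tau_{N_1}$ from $N_1\setminus L_1$ (equivalently, the first hitting time of $L_1$); it says nothing about first hitting times of the larger and differently-shaped set $N_2\cup Q$. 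First hitting times of closed sets along a flow are in general only upper semicontinuous: an orbit can touch $\partial(N_2\cup Q)$ tangentially while nearby orbits miss that first touch and only enter much later. Relative positive invariance of $N_2\cup Q$ does not repair this. (A concrete counterexample: take $\phi$ to be horizontal translation on the plane, $N_1=[-2,2]\times[-1,1]$ with $L_1=\{2\}\times[-1,1]$, and $N_2=[0,1]\times[-\tfrac12,\tfrac12]$ with $L_2=\{1\}\times[-\tfrac12,\tfrac12]$; then $\sigma(-1,y)$ jumps from $1$ to $3$ as $y$ crosses $\tfrac12$.) So $r_\lambda(x)=x\d(\lambda\sigma(x))$ need not be continuous and the claimed strong deformation retraction of $N_1$ onto $N_2\cup Q$ does not exist in general.

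The paper circumvents this by \emph{not} asking for a retraction that fixes $N_2\cup Q$. It uses the homotopy
$\Phi'(\lambda,x)=x\d\min\{\lambda T',\tau_{N_1}(x)\}$
(flow for time $\lambda T'$, clipped at the first exit from $N_1\setminus L_1$), which is continuous precisely because $\tau_{N_1}$ is. This homotopy does not fix $N_2\cup Q$, but it satisfies $\Phi'(\lambda,N_2\cup Q)\subset N_2\cup Q$ (by relative positive invariance of $N_2\cup Q$) and $\Phi'(1,N_1)\subset N_2\cup Q$ (combining (iii) with the positive invariance), which are exactly the hypotheses of Lemma~\ref{lemma:deformation_retract_2}. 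So the correct fix is to replace your $\sigma$-based strong retraction by this clipped-time ``weak'' deformation and invoke Lemma~\ref{lemma:deformation_retract_2} directly, rather than computing $H^*_E(N_1,N_2\cup Q)$. The same substitution is needed in your argument for (v), where you invoke the $r_\lambda$ of (iv) to deform $(N_1,Q)$ onto $(N_2\cup Q,Q)$; with the clipped homotopy in place, your chain of isomorphisms there goes through.
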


\begin{proof}
\noindent (i) Consider $\Phi\:[0,1]\times(N_2\cup Q,Q)\to(N_2\cup Q,Q)$ defined by
  \[
  \Phi(\lambda,x)=\left\{
         \begin{array}{ll}
           x\d (\lambda T_0) & \hbox{if $x\d[0,\lambda T_0]\ss N_1\setminus L_1$} \\
           x\d \tau(x) & \hbox{otherwise}
         \end{array},
       \right.
\]
The map $\Phi$ fulfills all the assumptions of Lemma~\ref{lemma:deformation_retract}. The only non-trivial thing to check is that $\Phi(\lambda,Q)\ss Q$. To see this, let $y\in Q$. If $y\in L_1$, then $\Phi(\lambda,y)=y$. Suppose that $y=x\d t'$, $x\in L_2$ and $t'\geq0$. Then $\Phi(\lambda,y)=x\d(t'+\lambda T_0)\in Q$ or $\Phi(\lambda,y)=x\d(t'+\tau(y))\in Q$.

 \noindent (ii) Since  $(N_2,L_2)=(N_2,N_2\cap Q)\hookrightarrow (N_2\cup Q,Q)$ the strong excision axiom\footnote{Recall the strong excision axiom from~\cite{abbondandolo1997new}. If $X$ and $Y$ are closed $E$-locally compact subsets of $H$ and $i:(X,X\cap Y)\hookrightarrow(X\cup Y,Y)$ is the inclusion map then $H^*_E(i)$ is an isomorphism.} implies the assertion.

 \noindent (iii)  Suppose that on the contrary, that there is a sequence $\{x_n\}_{n\in\mN}\subset N_1$ such that $x_n\d[0,2n]\cap (N_2\cup Q)=\varnothing$ for all $n\in\N$. Set $y_n:=x_n\d n$. Then $y_n\d[-n,n]\ss\cl{N_1\setminus L_1}$ and by Property \propC the sequence $\{y_n\}_{n\in\mN}$ converges up to subsequence to some $y_0\in \inv U\ss\inte{N_2\setminus L_2}$. That is, for $n$ sufficiently large $y_n=x_n\d n\in N_2$. This is a contradiction. 

 \noindent (iv) Consider $\Phi'\:[0,1]\times(N_1,L_1)\to(N_1,L_1)$ defined by
  \[
  \Phi'(\lambda,x)=\left\{
         \begin{array}{ll}
           x\d (\lambda T') & \hbox{if $x\d[0,\lambda T']\ss N_1\setminus L_1$} \\
           x\d \tau(x) & \hbox{otherwise}
         \end{array},
       \right.
\]
  where $T'>0$ is given by Part (iii). Since $N_2\cup Q$ is positively invariant with respect to $\f$ we have $\Phi'(\lambda,x)\in N_2\cup Q$ for all $x\in N_2\cup Q$. By (iii) $\Phi'(1,N_1)\subset N_2\cup Q$ and obviously $\Phi'(0,x)=x$. Thus, $\Phi$ satisfies assumptions of Lemma~\ref{lemma:deformation_retract_2}.

  \noindent (v) The conclusion follows, since $g$ is the composition of previously defined maps $g=j\circ\Phi(1,\,\d\,)\circ i$.
\end{proof}

\begin{figure}
\begin{overpic}[width=0.5\textwidth]{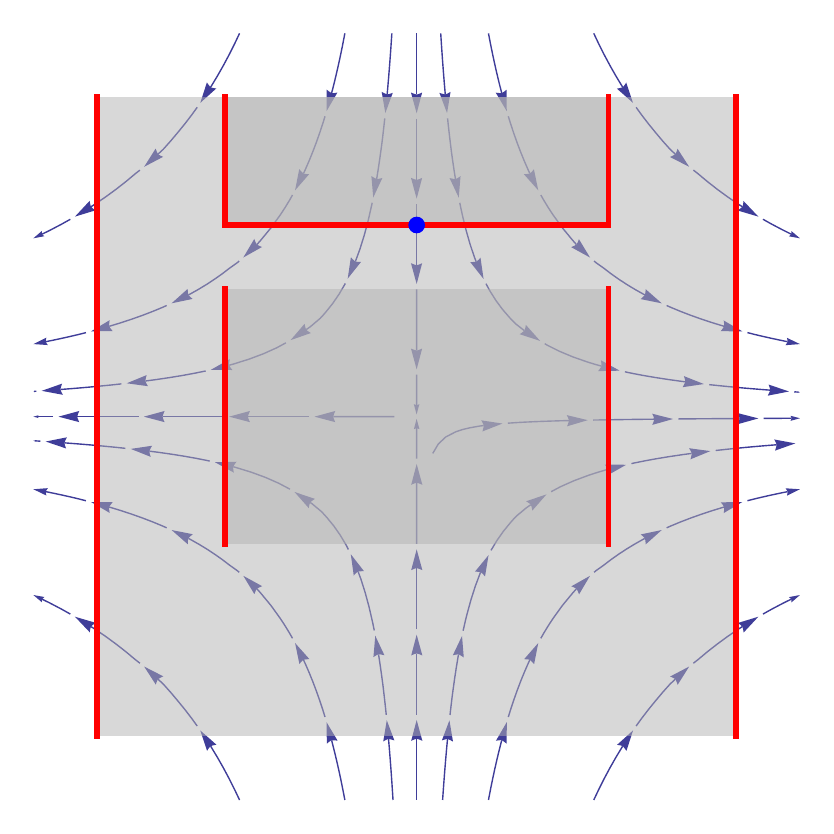}
 \put (14,14) {$N_1$}

 \put (93,62) {\textcolor{red}{$L_1$}}
 \put (28,36) {$N_2$}
 \put (28,75) {$N_2$}
 \put (74,70) {\textcolor{red}{$L_2$}}

\end{overpic}
\caption{It is not true that for nested index pairs as in Lemma~\ref{lemma:V} that there always exists a $T_0$ such that $x\cdot[0,T_0]\cap L_1\not=\emptyset$ for all $x\in L_2$, see also Remark~\ref{remark:googpairs}.}
\label{fig:assumptionindex}
\end{figure}

Let $U_\rho=N_\rho(U)=\set{x\in H\,\,|\,\,d(x,U)<\rho}$ denote the $\rho$-neighborhood of a set $U$.

\begin{lemma}\label{lemma:rho}
  Let $\f\:\R\times \mH\to \mH$ be a Property-\propC flow and $U$ be an isolating neighborhood for $\f$. Then there exist
  \begin{enumerate}[(i)]
   \item $T_0>0$ and $\epsilon_0>0$ such that $N_{\epsilon_0}(G^{T_0}_\f(U))\subset\inte U$. 
   \item $\r>0$ such that $U_\r$ is an isolating neighborhood and $\inv U=\inv{U_\r}$.
   \item $T>T_0$ such that $G_{\f}^T(U_\r)\ss\inte U$.
  \end{enumerate}
\end{lemma}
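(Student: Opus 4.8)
The plan is to establish the three items in order, since each one will be proved by a compactness argument relying on Property-\propC, and the later items reuse the sets and constants produced by the earlier ones.

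For (i), I would first observe that $G^{T_0}_\f(U) = \bigcap_{|t|\le T_0}\f(t,U)$ is a nested decreasing family (in $T_0$) of closed bounded sets, and that $\bigcap_{T_0>0}G^{T_0}_\f(U) = \invariant(U,\f)$. Since $U$ is an isolating neighborhood, $\invariant(U,\f)\subset\interior U$. The hard part — and I expect this to be the main obstacle throughout the lemma — is that $\mH$ is not locally compact, so one cannot simply invoke ``a shrinking intersection of compact sets eventually lands in the open set $\interior U$.'' Instead I would argue by contradiction: if for every $n\in\mN$ there were a point $y_n\in N_{1/n}(G^n_\f(U))$ with $y_n\notin\interior U$, pick $x_n\in G^n_\f(U)$ with $d(x_n,y_n)<1/n$. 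Then $x_n\cdot[-n,n]\subset U$, so $\bigcup_n x_n\cdot[-n,n]$ is bounded (it lies in $U$), and Property-\propC gives a convergent subsequence $x_n\to x_0$. A standard continuity argument shows $x_0\in\invariant(U,\f)\subset\interior U$; but $y_n\to x_0$ as well and $y_n\notin\interior U$, so $x_0\in\mH\setminus\interior U$, a contradiction. This yields $T_0$ and $\epsilon_0$ simultaneously (take $\epsilon_0 = 1/n$ and $T_0 = n$ for the first $n$ that works).

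For (ii), I would use the $\epsilon_0$ from (i): choose $\r>0$ with $\r<\epsilon_0$ and also small enough that $U_\r$ is still bounded and $U_\r\subset N_{\epsilon_0}(G^{T_0}_\f(U))$ — but this last inclusion needs care since $U$ itself need not be contained in $N_{\epsilon_0}(G^{T_0}_\f(U))$. The cleaner route is: since $\invariant(U_\r,\f)\supset\invariant(U,\f)$ trivially, it suffices to show the reverse for small $\r$, i.e.\ no new invariant points appear. Again by contradiction: if $\r_n\to0$ and $x_n\in\invariant(U_{\r_n},\f)\setminus\invariant(U,\f)$, then $x_n\cdot\mR\subset U_{\r_n}$, hence $x_n\cdot[-n,n]$ is bounded, so by Property-\propC $x_n\to x_0$ up to subsequence with $x_0\cdot\mR\subset\overline U = U$, so $x_0\in\invariant(U,\f)\subset\interior U$; then for large $n$, $x_n\in\interior U$ and a neighborhood-sized piece of its orbit stays in $\interior U$ — combined with $x_0\cdot\mR\subset\interior U$ and a further compactness/continuity argument on $\invariant(U,\f)$ one derives $x_n\cdot\mR\subset U$, contradicting $x_n\notin\invariant(U,\f)$. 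Shrinking $\r$ further if needed ensures $U_\r$ is an isolating neighborhood, i.e.\ $\invariant(U_\r,\f)\subset\interior U_\r$, which follows because $\invariant(U_\r,\f)=\invariant(U,\f)\subset\interior U\subset\interior U_\r$.

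For (iii), fix the $\r$ from (ii). Since $\invariant(U_\r,\f)=\invariant(U,\f)\subset\interior U$, I would run the same contradiction scheme as in (i) but with $U_\r$ in the role of the ambient neighborhood and $U$ in the role of the target open-ish set: if for every $n>T_0$ there is $x_n\in G^n_\f(U_\r)\setminus\interior U$, then $x_n\cdot[-n,n]\subset U_\r$ is bounded, Property-\propC gives $x_n\to x_0$, continuity gives $x_0\in\invariant(U_\r,\f)=\invariant(U,\f)\subset\interior U$, contradicting $x_n\to x_0\notin\interior U$ (using that $\mH\setminus\interior U$ is closed). The first such $n$ furnishes the required $T>T_0$ with $G^T_\f(U_\r)\subset\interior U$. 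In all three parts the only genuinely delicate point is replacing local-compactness arguments by the Property-\propC extraction of convergent subsequences together with the closedness of $U$, $\mH\setminus\interior U$, and the continuity of the flow; once that template is set up in (i), parts (ii) and (iii) are variations on it.
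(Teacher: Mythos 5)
Your proposal follows essentially the same route as the paper: all three parts are handled by contradiction, extracting a convergent subsequence via Property-\propC and playing the limit point, which lands in $\inv{U}\ss\inte{U}$, against a sequence confined to the closed set $\mH\setminus\inte{U}$. Parts (i) and (iii) are complete as written; for (i) the paper simply takes $\epsilon_0=\tfrac12 d(\partial U,\inv{U})$ and cites a lemma of Chang, which encodes exactly your argument.

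The one step you should tighten is in (ii), where you defer to ``a further compactness/continuity argument'' to conclude $x_n\cdot\mR\ss U$ for your chosen $x_n$; as framed, that is the awkward direction and is not immediate. The clean fix is to choose the sequence differently at the outset: if $\inv{U_{\rho_n}}\neq\inv{U}$, then since $\inv{U_{\rho_n}}$ is an invariant set it cannot be contained in $U$ (an invariant subset of $U$ lies in $\inv{U}$, forcing equality), so one may pick $x_n\in\inv{U_{\rho_n}}\setminus U$ rather than merely $x_n\notin\inv{U}$. Property-\propC then gives $x_n\to x_0\in\inv{U}\ss\inte{U}$ along a subsequence, directly contradicting $x_n\notin U$, and no statement about the entire orbit of $x_n$ is needed. (The paper's own proof of (ii) takes $x_n\in\inv{U_{1/n}}\setminus\inv{U}$ and is terse at exactly the same spot, so this is a shared soft point rather than a defect peculiar to your write-up.)
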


\begin{proof}
  (i) It is enough to take $\epsilon_0=\frac12(\partial U,\inv U)$. See~\cite[Lem. 5.5.19]{chang2006methods} for details.

  \noindent (ii) Suppose that on the contrary there exists a sequence of points $\{x_n\}_{n\in\mN}\subset \inv{U_{\frac1n}}$ such that $x_n\not\in\inv U$. Since $\f$ satisfies Property \propC, the sequence $\{x_n\}_{n\in\mN}$ tends up to subsequence, to some $x_0\in\inv U\subset\inte U$, which is a contradiction.

  \noindent (iii) Suppose that for all $n\geq1$ $G_{\f}^n(U_\r)\not\ss\inte U$. There is a sequence $x_n\in G_{\f}^n(U_\r)\setminus\inte U$ which is subconvergent to $x_0\in\inv{U_\r}=\inv U$. Impossible. 
\end{proof}

\noindent\textbf{Assumption:} From now on we assume that $\f,\psi\:\R\times \mH\to\ \mH$ are given $\ls$-flows, $U\subset \mH$ is an isolating neighborhood for $\f$ and 
\begin{enumerate}[({A}1)]
\item \label{enum:a2} the mapping $x\mapsto\f(t,x)$ is uniformly continuous on $U_\r$ (uniformly with respect to $t$ for all $\abs t\leq 4T$), that is for any $\epsilon>0$  there is $\delta>0$ such that for all $x,y\in U_\r$ and $\abs t\leq 4T$ the inequality $\norm{x-y}<\delta$ implies that
  \begin{equation*}
   \norm{\f(t,x)-\f(t,y)}<\epsilon.
  \end{equation*}
  Here the time $T$ is given by Lemma~\ref{lemma:rho}.
  \item \label{enum:a0}  $\rho<\min\set{\epsilon_0,2\delta(\epsilon_0)}$, where $\rho$ and $\epsilon_0$ are given by Lemma~\ref{lemma:rho} and $\delta$ is given by (A\ref*{enum:a2}).
\item \label{enum:a1}  for any $t\in[-4T,4T]$ and any $x\in U_{\rho}$
\begin{equation*}\label{eq:close}
  \norm{\f(t,x)-\psi(t,x)}<\frac\r2,
\end{equation*}
\end{enumerate}

\begin{lemma}\label{lemma:common_nbhd}
  Let $U$ be an isolating neighborhood for $\f$. Under assumption (A\ref*{enum:a1}) the set $U$ is also an isolating neighborhood for $\psi$. 
\end{lemma}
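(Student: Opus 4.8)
The plan is to show that $\invariant(U,\psi)\subset\interior U$, i.e.\ that no full $\psi$-orbit that stays in $U$ can touch the boundary $\partial U$. Suppose toward a contradiction that $x\in\invariant(U,\psi)$ but $x\notin\interior U$, so $x\in\partial U$. The idea is that $x$ also gives rise to an (approximate) $\f$-orbit: by assumption (A\ref{enum:a1}) the two flows differ by less than $\rho/2$ on $U_\rho$ for times in $[-4T,4T]$, and by (A\ref{enum:a0}) we have $\rho<\epsilon_0$ and $\rho<2\delta(\epsilon_0)$. The hope is to conclude that $\f(t,x)$ stays in $U_\rho$ for all $t$, hence in particular $x\cdot[-T,T]\subset U_\rho$ under $\f$, so $x\in G_\f^T(U_\rho)$, and then Lemma~\ref{lemma:rho}(iii) gives $x\in\interior U$, the desired contradiction.

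The work is in the iteration argument that propagates closeness beyond the single window $[-4T,4T]$. Fix $x\in\invariant(U,\psi)\cap\partial U$. I first claim $\psi(t,x)\in U_\rho$ for all $t$ (trivially, since the whole $\psi$-orbit lies in $U\subset U_\rho$). Now compare with $\f$ on the window $|t|\le T$: for such $t$, $\psi(t,x)\in U\subset U_\rho$, so (A\ref{enum:a1}) gives $\norm{\f(t,x)-\psi(t,x)}<\rho/2$, hence $\f(t,x)\in N_{\rho/2}(U)\subset U_\rho$. Thus $x\cdot[-T,T]\subset U_\rho$ for the flow $\f$, which says exactly $x\in G_\f^T(U_\rho)$. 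By Lemma~\ref{lemma:rho}(iii), $G_\f^T(U_\rho)\subset\interior U$, so $x\in\interior U$, contradicting $x\in\partial U$. Hence $\invariant(U,\psi)\subset\interior U$, and since $U$ is closed and bounded, it is an isolating neighborhood for $\psi$.

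The main obstacle I anticipate is bookkeeping the constants: one must make sure the single-window estimate $|t|\le T$ (rather than $4T$) is all that is needed here, so that no iteration is actually required for \emph{this} lemma — the larger window $4T$ and the uniform-continuity constant $\delta(\epsilon_0)$ in (A\ref{enum:a2})--(A\ref{enum:a0}) are presumably reserved for later lemmas (the actual construction of a common index pair), not for the statement that $U$ is an isolating neighborhood for $\psi$. So the only genuine point to verify is that membership in $G_\f^T(U_\rho)$ follows from (A\ref{enum:a1}) alone, together with the fact that the full $\psi$-orbit through a point of $\invariant(U,\psi)$ lies in $U$. If some subtlety forces an iteration (e.g.\ if one needed $\f(t,x)\in U_\rho$ for $|t|$ beyond $T$), the fix is the standard one: use (A\ref{enum:a2}) to control $\norm{\f(t,\psi(s,x))-\f(t+s,x)}$ for $|t|,|s|$ small via uniform continuity and the flow property, then step along the orbit in increments, keeping the error below $\rho$ using $\rho<2\delta(\epsilon_0)$ and $\rho<\epsilon_0$. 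But for the stated conclusion the one-window argument should suffice.
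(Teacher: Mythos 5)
Your argument is correct and is essentially the paper's proof: the paper establishes the chain $\invariant(U,\psi)\subset\invariant(U_{\rho/2},\psi)\subset G_\psi^T(U_{\rho/2})\subset G_\f^T(U_\rho)\subset\interior U$, where the middle inclusion is exactly your one-window application of (A3) and the last is Lemma~\ref{lemma:rho}(iii). You are also right that no iteration beyond the window $[-T,T]$ is needed here and that the $4T$ window and $\delta(\epsilon_0)$ are reserved for the later index-pair comparisons.
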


\begin{proof}
  This is a consequence of the inclusions 
  \[
    \inv{U,\psi}\ss \inv{U_{\frac\r2},\psi}\ss\tG^T(U_{\frac\r2})\ss G_\f^T(U_\r)\ss\inte U,
  \]
  where the last inclusion follows from Lemma~\ref{lemma:rho}.
\end{proof}

\begin{lemma}\label{lemmma:sequence}
  The following inclusions hold
  \[
    \tG^{4T}(U)\ss G_{\f}^{3T}(U)\ss\tG^{2T}(U)\ss G_{\f}^T(U).
  \]
\end{lemma}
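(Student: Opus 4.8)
The plan is to establish the four inclusions one at a time, alternating between the two flows, using assumption (A\ref{enum:a1}) (closeness of $\f$ and $\psi$ on $U_\rho$ for times up to $4T$) and assumption (A\ref{enum:a2})--(A\ref{enum:a0}) (uniform continuity of $\f$ and the smallness of $\rho$) to pass between them. The key elementary observation is the following sandwich principle: if $x \cdot [-S,S] \subset U$ under one flow, and the other flow stays within $\rho/2$ of the first for $|t| \le S$, then under the second flow the orbit over $[-S,S]$ lies in $U_{\rho/2}$; and conversely, if we can also control an extra factor — using uniform continuity to push an orbit that lies in $U_{\rho}$ back into $U$ after a small perturbation — we can convert an orbit-in-$U_\rho$ statement into an orbit-in-$U$ statement at the cost of shrinking the time interval. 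The alternation of $4T \leadsto 3T \leadsto 2T \leadsto T$ is exactly what lets each such step lose one factor of $T$ while staying within the range $[-4T,4T]$ on which the hypotheses are valid.

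Concretely, I would argue as follows. For the first inclusion $\tG^{4T}(U) \subset G_\f^{3T}(U)$: take $x \in \tG^{4T}(U)$, so $\psi(t,x) \in U$ for $|t| \le 4T$. Write $\f(t,x) = \psi(t,x) + [\f(t,x) - \psi(t,x)]$. As long as the relevant orbit points stay in $U_\rho$ (which they do, being within $\rho/2 < \rho$ of points of $U$ by (A\ref{enum:a1})), the estimate $\norm{\f(t,x) - \psi(t,x)} < \rho/2$ gives $\f(t,x) \in U_{\rho/2} \subset U_\rho$ for $|t| \le 4T$; in particular $\f(t,x) \in U_\rho$ for $|t| \le 3T$. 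Then I use (A\ref{enum:a2}) and (A\ref{enum:a0}): since $\rho < 2\delta(\epsilon_0)$ we have $d(\f(t,x),U) < \rho/2 < \delta$, and applying the flow $\f$ for a small additional time and invoking uniform continuity together with $\epsilon_0 \le \frac12 d(\partial U, \inv U)$ (Lemma~\ref{lemma:rho}(i)) forces $\f(t,x) \in \inte U$ for $|t| \le 3T$, i.e.\ $x \in G_\f^{3T}(U)$. The middle inclusion $G_\f^{3T}(U) \subset \tG^{2T}(U)$ is the symmetric statement with the roles of $\f$ and $\psi$ reversed, again using (A\ref{enum:a1}) (which is symmetric in the two flows) and losing one more factor of $T$. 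The last inclusion $\tG^{2T}(U) \subset G_\f^T(U)$ repeats the argument of the first one, now over $[-T,T]$.

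The main obstacle, and the only place real care is needed, is the conversion step "orbit lies in $U_\rho$ over $[-(k{+}1)T,(k{+}1)T]$" $\implies$ "orbit lies in $\inte U$ over $[-kT,kT]$": naively the $\rho/2$ perturbation only gives membership in the \emph{closed} $\rho$-neighborhood, not in $\inte U$, so one must exploit uniform continuity to absorb the perturbation. The bookkeeping of which constants control which estimate — that $\rho < \epsilon_0$ handles the crude containment while $\rho < 2\delta(\epsilon_0)$ handles the uniform-continuity correction, and that $4T$ is large enough that every time parameter invoked stays in $[-4T,4T]$ — is the substance of the proof; everything else is the triangle inequality. I expect the write-up to consist of proving one representative inclusion in detail and remarking that the other two follow by the identical argument (with $\f \leftrightarrow \psi$ interchanged in the even step), shrinking the time interval by $T$ each time.
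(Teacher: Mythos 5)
Your high-level skeleton --- alternate between the two flows and lose one factor of $T$ at each step --- matches the paper, but the mechanism you propose for the crucial ``lose a $T$'' conversion step does not close, and the paper uses a different ingredient that you never invoke.

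The gap is precisely in the step ``the $\f$-orbit of $x$ lies in $U_\r$ over $[-(k+1)T,(k+1)T]$ $\Rightarrow$ the $\f$-orbit of $x$ lies in $\inte U$ over $[-kT,kT]$.'' You propose to handle this with the uniform continuity assumption (A\ref*{enum:a2}) and Lemma~\ref{lemma:rho}(i). But uniform continuity only compares two orbits issuing from nearby initial points; knowing that $\f(t,x)$ lies within $\r/2$ of \emph{some} point $z\in U$ tells you nothing useful about $z$'s $\f$-orbit, so the comparison never produces a point that you know lies in $G_\f^{T_0}(U)$, which is what Lemma~\ref{lemma:rho}(i) would require. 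Concretely, taking $z=\psi(t,x)\in U\ss U_\r$, assumption (A\ref*{enum:a1}) places $\f(s,z)$ in $U_{\r/2}$ for $|s|\le T$, giving $z\in G_\f^T(U_\r)$, not $z\in G_\f^{T_0}(U)$; and even if you could reach $z\in\inte U$, that does not place $\f(t,x)$ in $\inte U$, because $\inte U$ has no uniform slack of size $\r/2$. The appeal to ``a small additional time'' does not repair this.

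The ingredient the paper uses instead, which you should replace your uniform-continuity step with, is the semigroup decomposition $G_\f^{(k+1)T}(V)=G_\f^{kT}\bigl(G_\f^T(V)\bigr)$ (and likewise for $\tG$) combined with Lemma~\ref{lemma:rho}(iii): $G_\f^T(U_\r)\ss\inte U$. That containment --- a pure Property-\propC consequence --- is exactly what absorbs the $\r$-fattening at the cost of one $T$. For instance the last inclusion reads
\[
\tG^{2T}(U)\ss G_\f^{2T}(U_{\r/2})\ss G_\f^{2T}(U_\r)=G_\f^T\bigl(G_\f^T(U_\r)\bigr)\ss G_\f^T(\inte U)\ss G_\f^T(U),
\]
using (A\ref*{enum:a1}) once and then the composition identity plus Lemma~\ref{lemma:rho}(iii). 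Note also that (A\ref*{enum:a2}) and (A\ref*{enum:a0}) play no role here, and the middle inclusion $G_\f^{3T}(U)\ss\tG^{2T}(U)$ is \emph{not} obtained by a pure $\f\leftrightarrow\psi$ swap, because Lemma~\ref{lemma:rho}(iii) is only available for $\f$: the paper passes from $\tG^T(U_{\r/2})$ back to $G_\f^T(U_\r)$ by a second application of (A\ref*{enum:a1}) before invoking (iii). Your symmetry remark glosses over this asymmetry.
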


\begin{proof}
  By Lemma~\ref{lemma:rho} (ii)
  \begin{equation}\label{eq:subset}
    G_{\f}^{2T}(U_\r)=G_{\f}^T(G_{\f}^T(U_\r))\subset G_{\f}^T(\inte U)\ss G_{\f}^T(U).
  \end{equation}
  We start from the inclusion on the right hand side. Using (A\ref*{enum:a1}) and \eqref{eq:subset} one has
  \[
    \tG^{2T}(U)\ss G_{\f}^{2T}(U_{\frac\r2})\ss G_{\f}^{2T}(U_\r)\ss G_{\f}^T(U).
  \]
  The same argument can be implied to the inclusions
  \[
    G_{\f}^{3T}(U)\ss\tG^{3T}(U_{\frac\r2})=\tG^{2T}(\tG^T(U_{\frac\r2}))\ss\tG^{2T}(G_{\f}^T(U_\r))\ss\tG^{2T}(U),
  \]
  and
  \[
    \tG^{4T}(U)\ss G_{\f}^{4T}(U_{\frac\r2})=G_{\f}^{3T}(G_{\f}^T(U_{\frac\r2}))\ss G_{\f}^{3T}(G_{\f}^T(U_\r))\ss G_{\f}^{3T}(U).
  \]
\end{proof}

Let $(N,L)$ be a regular index pair for $\f$. Then $U:=\cl{N\setminus L}$ is an isolating neighborhood for this flow and $(U,U\cap L)$ is a regular index pair. It is not true that $(U,U\cap L)$ is an index pair for $\psi$ in general, nor it is a regular pair. However, since $U$ is an isolating for $\psi$, Lemma~\ref{lemma:existence_ip} implies that $(\tG^{2T}(U), \Gamma_{\psi}^{2T}(U,U\cap L))$ is an index pair, but it might not be regular. In the proof of Theorem~\ref{thm:continuation} we would like to arrange things in such a way that three of four index pairs are regular. For this reason we need the following lemma which is due to Salamon, cf.~\cite[Lem. 5.3, Rem. 5.4]{Salamon_css}. 

\begin{lemma}\label{lemma:regularization}
   There is a set $L_{\beta}$ such that $(\tG^{2T}(U),L_{\beta})$ is a regular index pair for $\psi$. Moreover, for any $\beta>0$, $L_\beta$ can be chosen close to $\Gamma_{\psi}^{2T}(U,U\cap L)$ in the sense that for all $z\in L_{\beta}$ there is $w\in \Gamma_{\psi}^{2T}(U,U\cap L)$ such that $\norm{z-w}<\beta$.
\end{lemma}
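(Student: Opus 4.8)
The plan is to reproduce Salamon's regularization construction \cite[Lem.~5.3, Rem.~5.4]{Salamon_css}, checking that its only genuinely non-formal ingredient --- sequential compactness of the isolated invariant set and of certain orbit segments --- is available here through Property-\propC (Lemma~\ref{lemma:C}). Write $N_1:=\tG^{2T}(U)$ and $L_0:=\Gamma_{\psi}^{2T}(U,U\cap L)$. By Lemma~\ref{lemma:common_nbhd} the set $U$ is isolating for $\psi$, so $(N_1,L_0)$ is an index pair for $\psi$, as noted just before the lemma; in particular $L_0$ is closed and bounded, positively invariant with respect to $N_1$, an exit set, and $S=\inv{\cl{N_1\setminus L_0}}\ss\interior\cl{N_1\setminus L_0}$.

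For $\sigma>0$ I would take as candidate exit set the flow-collar
\[
  L^{\sigma}:=\set{x\in N_1\,\,|\,\,x\d[0,t]\ss N_1\text{ and }x\d t\in L_0\text{ for some }t\in[0,\sigma]}=\set{x\in N_1\,\,|\,\,\theta(x)\leq\sigma},
\]
where $\theta(x)\in[0,\infty]$ is the first time the $\psi$-orbit of $x$ meets $L_0$ while staying in $N_1$ ($\theta(x)=\infty$ if this never happens). Since $L_0$ is an exit set, an orbit in $N_1$ that leaves $N_1$ must first meet $L_0$; hence $N_1\setminus L^{\sigma}=\set{x\in N_1\,|\,\theta(x)>\sigma}$ and $\theta(x\d s)=\max(0,\theta(x)-s)$ whenever $x\d[0,s]\ss N_1$. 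Most index pair axioms for $(N_1,L^{\sigma})$ are then routine: $L^{\sigma}\ss N_1$ is bounded; it is closed, since for an \emph{already convergent} sequence $x_n\to x_0$ in $L^{\sigma}$ one extracts $t_n\to t_0$ in the compact interval $[0,\sigma]$ and invokes continuity of $\psi$ and closedness of $N_1,L_0$ (no compactness of $\mH$ is used here); it is positively invariant with respect to $N_1$ by the formula for $\theta(x\d s)$; and it is an exit set because it contains $L_0$. For $\inv{\cl{N_1\setminus L^{\sigma}}}=S\ss\interior\cl{N_1\setminus L^{\sigma}}$, one inclusion is automatic from $L_0\ss L^{\sigma}$, and the rest holds once $\sigma$ is small: $S$ is compact by Property-\propC and bounded away from $L_0$ (this uses compactness of $S$ and the $\Gamma$-type structure of $L_0$, cf.~the proof of Lemma~\ref{lemma:existence_ip}), while $L^{\sigma}$ is contained in the $\sigma\sup_{N_1}\norm{F}$-neighbourhood of $L_0$, so $L^{\sigma}\cap S=\varnothing$, which also yields $S\ss\interior\cl{N_1\setminus L^{\sigma}}$.

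The main point is that $(N_1,L^{\sigma})$ is \emph{regular}. Along orbits in $N_1$ its exit-time function equals $\max(0,\theta(x)-\sigma)$, so regularity is the continuity of this function; the width-$\sigma$ collar is designed precisely to absorb the pointwise discontinuities of the raw hitting time $\theta$. I would establish continuity as in \cite[Lem.~5.3]{Salamon_css}: the half that goes through formally uses only continuity of $\psi$, closedness of $N_1$ and $L_0$, positive invariance of $L_0$ (so that leaving $N_1$ forces passage through $L_0$, hence $L^{\sigma}$), and a convergent-subsequence argument of the kind used in the proof of Lemma~\ref{lemma:V}(iii); the other half requires a genuine compactness argument --- supplied by Property-\propC --- that, after collaring, recovers the semicontinuity property of $\theta$ that fails pointwise. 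I expect this to be the main obstacle: it is where the argument is analytic rather than formal, and the only place where the non-compactness of $\mH$ could interfere. One thus obtains a regular index pair $(N_1,L^{\sigma})$ for all sufficiently small $\sigma>0$.

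Finally, the closeness statement. The $\ls$-field $F=L+K$ is bounded on the bounded set $N_1$, say $\norm{F}\leq M$ there; given $\beta>0$ put $\sigma(\beta):=\min\set{\beta/(2M),\sigma_0}$, where $\sigma_0>0$ is small enough for the previous paragraph, and $L_\beta:=L^{\sigma(\beta)}$. Then $(N_1,L_\beta)$ is a regular index pair for $\psi$, and for $z\in L_\beta$ there is $t\in[0,\sigma(\beta)]$ with $z\d[0,t]\ss N_1$ and $w:=z\d t\in\Gamma_{\psi}^{2T}(U,U\cap L)$, so that $\norm{z-w}=\norm{\int_0^{t}F(z\d s)\,ds}\leq tM\leq\tfrac{\beta}{2}<\beta$, which is the asserted bound.
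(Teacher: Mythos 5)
The paper itself gives no proof of this lemma; it attributes the result to Salamon and cites \cite[Lem.~5.3, Rem.~5.4]{Salamon_css}. Your proposal is a genuine attempt to reconstruct that argument, and much of it is sound: the collar $L^\sigma=\{x\in N_1 : \theta(x)\le\sigma\}$ is indeed closed, positively invariant with respect to $N_1$, and an exit set by exactly the reasoning you give (none of which needs compactness of $\mH$); the isolation condition for small $\sigma$ and the $\beta$-closeness estimate (bounding $\norm{z-z\cdot t}$ by $\sigma$ times a sup-bound of the $\psi$-vector field on the bounded set $N_1$) are also correct, apart from the notational slip of writing $F$, which the paper reserves for $\phi$, instead of the field generating $\psi$.

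The problem is the step you yourself flag as ``the main obstacle'': the continuity of the exit-time function $\tau_{N_1}=\max(0,\theta-\sigma)$, which is the entire content of ``regular.'' You do not prove it, and your heuristic that the collar ``absorbs the pointwise discontinuities of the raw hitting time'' does not hold up as stated. The hitting time $\theta$ is automatically lower semicontinuous (your formal half), and passing from $\theta$ to $\max(0,\theta-\sigma)$ does not change its upper-semicontinuity behaviour at any point with $\theta(x)>\sigma$: there, continuity of $\tau_{N_1}$ is literally equivalent to upper semicontinuity of $\theta$, which is the property that fails for the uncollared Benci pair. The collar only kills discontinuities inside $\{\theta\le\sigma\}$, where the function is forced to vanish. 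Making the construction work requires either a more careful choice of the thickened exit set (Salamon's actual construction is not simply a uniform flow-time collar of $L_0$) or an additional argument exploiting the $\Gamma$-structure of $L_0$ and Property-\propC to rule out upward jumps of $\theta$ in $N_1\setminus L^\sigma$; neither is supplied. So the proposal has a real gap precisely at the point where the lemma has content, though you are candid about it.
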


\begin{remark}\label{remark:googpairs}
If we set $(N_1,L_1)=(G_{\f}^S(U),\G_{\f}^S(U,U\cap L))$ and $(N_2,L_2)=(G_{\f}^T(U),\G_{\f}^T(U,U\cap L))$ for $T>S$, then the last assumption of Lemma~\ref{lemma:V} is satisfied.  
\end{remark}

\begin{theorem}\label{thm:continuation}
  Assume that $\f,\psi$ are given $\ls$-flows satisfying (A\ref*{enum:a2}), (A\ref*{enum:a0}) and (A\ref*{enum:a1}).   Then 
\[  
  \ch(\f,U)=\ch(\psi,U).
\]
\end{theorem}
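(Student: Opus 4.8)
The plan is to interpolate between the $E$-cohomological Conley indices of $\f$ and $\psi$ through a chain of four index pairs connected by inclusion-induced isomorphisms, applying Lemma~\ref{lemma:V} twice and using the results of Lemma~\ref{lemma:existence_ip}, Lemma~\ref{lemmma:sequence} and Lemma~\ref{lemma:regularization} to verify the hypotheses. Concretely, I would fix a regular index pair $(N,L)$ for $\f$ and set $U:=\cl{N\setminus L}$, so that $(U,U\cap L)$ is a regular index pair for $\f$ (Remark~\ref{remark:tau}), and $U$ is then an isolating neighborhood for $\psi$ by Lemma~\ref{lemma:common_nbhd} (using (A\ref{enum:a1})). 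By Lemma~\ref{lemma:existence_ip}, the pairs $P_\f^S:=(G_\f^S(U),\G_\f^S(U,U\cap L))$ and $P_\psi^S:=(\tG^S(U),\G_\psi^S(U,U\cap L))$ are index pairs for $\f$ respectively $\psi$ for all large $S$, in particular for $S\in\set{2T,3T,4T}$.

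The key chain of comparisons is as follows. First, for $\f$ alone, the nested pairs $(N_1,L_1)=P_\f^{3T}$ and $(N_2,L_2)=P_\f^{4T}$ satisfy $N_2\ss\cl{N_1\setminus L_1}$ and have the same invariant set (both equal $\inv U$), and the last hypothesis of Lemma~\ref{lemma:V} holds by Remark~\ref{remark:googpairs}; hence $H^*_E(P_\f^{3T})\cong H^*_E(P_\f^{4T})$, and of course $H^*_E(P_\f^{3T})\cong\ch(\f,U)$ since any two index pairs for $\inv U$ give isomorphic $E$-cohomology. The analogous statement for $\psi$ gives $H^*_E(P_\psi^{3T})\cong\ch(\psi,U)$. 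The crucial middle step is to compare a $\psi$-index pair with an $\f$-index pair: I would apply Lemma~\ref{lemma:V} with $(N_1,L_1)$ a regular index pair for $\f$ — for this I regularize, replacing $L_1$ as needed, or better, set $(N_1,L_1)=P_\f^{3T}$ which is of the explicit $G$-form and hence regular by Remark~\ref{remark:tau}/Lemma~\ref{lemmma:sequence} — and $(N_2,L_2)$ either $P_\psi^{4T}$ or the regularized version $(\tG^{2T}(U),L_\beta)$ from Lemma~\ref{lemma:regularization}. The inclusions of Lemma~\ref{lemmma:sequence}, namely $\tG^{4T}(U)\ss G_\f^{3T}(U)\ss\tG^{2T}(U)\ss G_\f^T(U)$, guarantee $N_2\ss\cl{N_1\setminus L_1}$; the equality of invariant sets follows from Lemma~\ref{lemma:rho}(ii) and Lemma~\ref{lemma:common_nbhd}; and the hypothesis $x\d[0,T_0]\cap L_1\neq\varnothing$ for $x\in L_2$ is where the closeness assumption (A\ref{enum:a1}) together with the flow estimates (A\ref{enum:a2}) enters — a point on a $\psi$-exit set must, after a uniformly bounded time, be pushed by the $\f$-flow out of $G_\f^{3T}(U)$ because the two flows stay $\rho/2$-close on $U_\rho$ over time intervals of length $4T$. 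Lemma~\ref{lemma:V}(v) then yields an isomorphism $H^*_E(P_\psi^{\cdot})\cong H^*_E(P_\f^{\cdot})$, and chaining all isomorphisms gives $\ch(\f,U)\cong\ch(\psi,U)$.

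The main obstacle I anticipate is the bookkeeping of which of the four pairs must be regular and precisely which times ($T$, $2T$, $3T$, $4T$) to feed into $G_\f$ and $\tG$ so that simultaneously (a) all the set-inclusions $N_2\ss\cl{N_1\setminus L_1}$ demanded by Lemma~\ref{lemma:V} hold — this is exactly the content of Lemma~\ref{lemmma:sequence}, which is why it was stated with the $4T$-chain — and (b) the hypothesis relating the two exit sets can be extracted from (A\ref{enum:a1}) and (A\ref{enum:a2}). Verifying this last hypothesis rigorously — producing the uniform $T_0$ with $x\d[0,T_0]\cap L_1\neq\varnothing$ for all $x\in L_2$ — is the genuinely non-formal part: it requires translating the metric closeness of $\f$ and $\psi$ into the statement that $\psi$-orbits that exit $U$ force the corresponding $\f$-orbits to exit a slightly larger set in comparable time, and then invoking Remark~\ref{remark:googpairs} to identify the resulting exit behavior with hitting $\G_\f^{3T}(U,U\cap L)$. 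Once that hypothesis is in hand, the rest is a diagram chase through inclusion-induced maps, all of which are isomorphisms by the strong excision axiom for $E$-cohomology and the deformation-retract lemmas cited inside the proof of Lemma~\ref{lemma:V}.
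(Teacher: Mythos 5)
Your scaffolding (the four index pairs at times $T,2T,3T,4T$, the regularization Lemma~\ref{lemma:regularization}, the nesting Lemma~\ref{lemmma:sequence}, and the role of (A\ref{enum:a2})--(A\ref{enum:a1})) is correct, but the ``crucial middle step'' contains a genuine gap. Lemma~\ref{lemma:V} is a statement about two nested index pairs \emph{of the same flow}: its proof is carried out entirely with the $\f$-flow --- the set $Q=(L_2\d\R_{\geq0}\cap N_1)\cup L_1$ uses the $\f$-forward orbit of $L_2$, Part~(iii) invokes Property-\propC for $\f$, and the deformation retracts in Parts~(i) and~(iv) are $\f$-flow deformations. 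The hypotheses of the lemma (e.g.\ positive invariance of $L_2$ with respect to $N_2$, the exit condition) are conditions on index pairs for $\f$, so it cannot be applied with $(N_1,L_1)$ an $\f$-pair and $(N_2,L_2)$ a $\psi$-pair as you propose. Your final remark that ``the rest is a diagram chase through inclusion-induced maps'' is also off: the maps relating an $\f$-pair to a $\psi$-pair cannot be inclusions --- there is no a priori containment between such pairs --- and the $E$-cohomology isomorphisms they induce are precisely what must be proved, not something given by excision.

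What the paper actually does is to apply Lemma~\ref{lemma:V}(v) \emph{twice, once per flow}: to the nested $\f$-pairs $(N_1,L_1)=(G_\f^T(U),\G_\f^T(U,U\cap L))$ and $(N_2,L_2)=(G_\f^{3T}(U),\G_\f^{3T}(U,U\cap L))$, yielding an isomorphism $g^*$; and to the nested $\psi$-pairs $(\tilde N_1,\tilde L_1)=(\tG^{2T}(U),L_\beta)$ and $(\tilde N_2,\tilde L_2)=(\tG^{4T}(U),\G_\psi^{4T}(U,U\cap L))$, yielding an isomorphism $\tilde g^*$. The bridge between the two flows is built from \emph{flow-induced} maps, not inclusions: $f_1\colon(\tilde N_1,\tilde L_1)\to(N_1,L_1)$ of the form $x\mapsto\f(3T,x)$ (cut off by the exit-time function $\tau_{N_1}$), and $\xi\colon(N_2,L_2)\to(\tilde N_1,\tilde L_1)$ of the form $x\mapsto\psi(3T,x)$ (cut off by $\tau_{\tilde N_1}$), together with a symmetric $f_2$. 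Checking these are well defined, i.e.\ send exit sets into exit sets, is exactly where (A\ref{enum:a2})--(A\ref{enum:a1}) and the nesting $\tilde N_2\ss N_2\ss\tilde N_1\ss N_1$ from Lemma~\ref{lemmma:sequence} enter. The proof then shows the triangles $g\sim f_1\circ\xi$ and $\tilde g\sim\xi\circ f_2$ are homotopy commutative, so on $E$-cohomology $g^*=\xi^*\circ f_1^*$ (hence $\xi^*$ surjective) and $\tilde g^*=f_2^*\circ\xi^*$ (hence $\xi^*$ injective). This ``sandwich'' or two-out-of-three argument is the missing idea in your proposal: the isomorphism between the $\f$-index and the $\psi$-index is extracted indirectly from the two in-flow isomorphisms supplied by Lemma~\ref{lemma:V}, rather than by a single direct application of that lemma across flows.
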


\begin{proof}
Since the index is independent of the choice of index pair we are going chose appropriate index pairs $({N_2},{L_2})$ and $({\tilde N_1},{\tilde L_1})$ for $\f$ and $\psi$ respectively such that $\HE*{N_2}{L_2}\cong\HE*{\tilde N_1}{\tilde L_1}$. Define
\begin{align*}
    (N_1,L_1)&:=(G_{\f}^T(U),\G_{\f}^T(U,U\cap L)), \\
    (\tilde N_1,\tilde L_1)&:=(\tG^{2T}(U),L_{\beta}),\\
    (N_2,L_2)&:=(G_{\f}^{3T}(U),\G_{\f}^{3T}(U,U\cap L)), \\
    (\tilde N_2,\tilde L_2)&:=(\tG^{4T}(U),\Gamma_{\psi}^{4T}(U,U\cap L)).
\end{align*}
Observe that the first three pairs are regular. It follows from Lemma~\ref{lemmma:sequence} that $N_2\ss\cl{N_1\setminus L_1}$ hence that the pairs $(N_1,L_1)$ and $(N_2,L_2)$ satisfy the assumptions of Lemma~\ref{lemma:V}. We claim that pairs $(\tilde N_1,\tilde L_1)$ and $(\tilde N_2,\tilde L_2)$ also satisfy these hypotheses. Indeed, by \cite[Lemma 5.5.19]{chang2006methods} one can find $\gamma>0$ such that $N_\gamma(\tilde N_2)\ss\tilde N_1$, provided that $T$ is sufficiently large. Now, we just take $\beta<\gamma$. The remainder of the proof is devided into 5 steps. 

\emph{Step 1.}
The map $f_1\:(\tilde N_1,\tilde L_1)\to(N_1,L_1)$ given by
 \begin{equation}
  f_1(x)=\begin{cases}
           \f(3T,x) & \hbox{if $\f([0,3T],x)\ss N_1\setminus L_1$}\\
           \f(\tau_{N_1}(x),x)& \hbox{else}
          \end{cases},
 \end{equation} 
 is well defined. Indeed, all orbits of $\f$ must leave $N_1$ through the set $L_1$ and $\tilde N_1\subset N_1$. We just need to be sure that $f_1(L_\beta)\subset L_1$. Let $y\in\G_{\psi}^{2T}$. Then $\psi(t_0,y)\in\partial U$ for some $t_0\in[0,3T]$ and  by (A\ref*{enum:a0}) and (A\ref*{enum:a1})
 \[
  d(\f(t_0,y),\partial U)\leq \norm{\f(t_0,y)-\psi(t_0,y)}+d(\psi(t_0,y),\partial U)<\frac\rho2<\frac{\epsilon_0}{2}.
 \]
 Let $x\in L_\beta$. Taking $\beta$ in Lemma~\ref{lemma:regularization} so small that $\beta<\delta(\frac{\epsilon_0}{2})$, where $\delta$ is given by (A\ref*{enum:a2}), one has a point $y\in\G_{\psi}^{2T}$ such that $\norm{x-y}<\delta(\frac{\epsilon_0}{2})$ and by the uniform continuity of $\f(t_0,\,\d\,)$ 
 \[
  d(\f(t_0,x),\partial U)\leq\norm{\f(t_0,x)-\f(t_0,y)}+d(\f(t_0,y),\partial U)<\epsilon_0.
 \]
 That is $\f([0,t_0],x)\cap L_1\neq\varnothing$ for $x\in L_\beta$.

\emph{Step 2.}
The map $\xi\:(N_2,L_2)\to(\tilde N_1,\tilde L_1)$ given by
 \begin{equation}
  \xi(x)=\begin{cases}
           \psi(3T,x) & \hbox{if $\f([0,3T],x)\ss \tilde N_1\setminus \tilde L_1$}\\
           \psi(\tau_{\tilde N_1}(x),x)& \hbox{else}
          \end{cases},
 \end{equation} 
 is well defined. 
 
 We have to show that $\xi(L_2)\subset \tilde L_1$. The reasoning goes in the same manner as above. Let $x\in L_2$. Then there is $t_1\in[0,3T]$ such that $\f(t_1,x)\in\partial U$. Using (A\ref*{enum:a0}) and (A\ref*{enum:a1}) one has
 \[
  d(\psi(t_1,x),\partial U)\leq\norm{\psi(t_1,x)-\f(t_1,x)}+d(\f(t_1,x),\partial U)<\frac{\epsilon_0}{2},
 \]
 which means that $\psi([0,t_1],x)\cap \tilde L_1\neq\varnothing$.
 
\emph{Step 3.}
By Lemma~\ref{lemma:V} Part (v) the map $g\:(N_2,L_2)\to(N_1,L_1)$ given by 
 \begin{equation}
  g(x)=\begin{cases}
           \f(3T,x) & \hbox{if $\f([0,3T],x)\ss N_2\setminus L_2$}\\
           \f(\tau_{N_1}(x),x)& \hbox{else}
          \end{cases},
 \end{equation} 
 induces the isomorphism $g^*\:\HE{*}{N_1}{L_1}\to\HE*{N_2}{L_2}$.
 
\emph{Step 4.}
Both triangles of the diagram 
 \begin{equation}\label{diagram_contin_maps}
\xymatrix{
(N_1,L_1) & \ar[l]_{g}    (N_2,L_2) \ar[d]^{\xi}  \\
                                & \ar[lu]^{f_1} (\tilde N_1,\tilde L_1) & \ar[l]^{\tilde g} (\tilde{N}_2,\tilde{L}_2) \ar[lu]_{f_2} }
  \end{equation}
  are homotopy commutative. Here $\tilde g$ and $f_2$ are defined in the same manner as in Step~1 and Step~3. We will show that $g\sim f_1\circ\xi$. The homotopy $\tilde g\sim\xi\circ f_2$ can be shown similarly. Let $h\:[0,1]\times(N_2,L_2)\to(N_1,L_1)$ be given by
  \[
    h(\lambda,x)=\f\left(\min\set{3T,\tau_{N_1}(z_\lambda(x))},z_\lambda(x)\right),
  \]
  where $z_\lambda(x)=\psi(\min\{3\lambda T,\tau_{\tilde N_1}(x)\},x)$. It is routine to verify that $h(0,\,\d\,)=g$ and $h(1,\,\d\,)=f_1\circ\xi$. The only thing to check is $h(\lambda,L_2)\ss L_1$ for all $\lambda\in[0,1]$. 
  
  In fact, it is enough to show that for all $0\leq\lambda\leq1$ and $x\in L_2$, there is $t_\lambda\in[0,3T]$ such that $\f(t_\lambda,\psi(3\lambda T,x))$ lies outside $N_1$. Observe that $\psi(3\lambda T,x)\in\tilde N_1\ss N_1$. There is $t'_\lambda\in[3\lambda T,3T]$ such that $\f(t'_\lambda,x)\in\partial U$. Let $t_\lambda=t'_\lambda-3\lambda T$. Then $\f(t_\lambda,\f(3\lambda T,x))\in\partial U$ and by uniform continuity of $\f$ we have
  \begin{align*}
    d(\f(t_\lambda,\psi(3\lambda T,x)),\partial U)&\leq\norm{\f(t_\lambda,\psi(3\lambda T,x))-\f(t_\lambda,\f(3\lambda T,x))}\\&+d(\f(t_\lambda,\f(3\lambda T,x)),\partial U)<\epsilon_0
  \end{align*}
  since $\norm{\f(3\lambda T,x)-\psi(3\lambda T,x)}<\frac\rho2<\delta(\epsilon_0)$ by (A\ref*{enum:a0}).

\emph{Step 5.}
It is a direct consequence of Step~4 that the diagram 
  \begin{equation}\label{diagram_contin_maps}
\xymatrix{
\HE*{N_1}{L_1} \ar[r]^{g^*}  \ar[dr]_{f_1^*} &  \HE*{N_2}{L_2} \ar[dr]^{f_2^*}  \\
                                & \HE*{\tilde N_1}{\tilde L_1} \ar[u]_{\xi^*}  \ar[r]_{\tilde g^*}   & \HE*{\tilde{N}_2}{\tilde{L}_2}  }
\end{equation}
commutes, i.e., $g^*=\xi^*\circ f_1^*$ and $\quad \tilde g^*=f_2^*\circ \xi^*$. Since $g^*$ and $\tilde g^*$ are isomorphisms, so is $\xi^*$, which completes the proof.

\end{proof}

\section{Reineck's Theorem for $\ls$-flows}
\label{sec:reineck}

Reineck~\cite{reineck1991} proved the following theorem: 

\begin{theorem}[\cite{reineck1991}]\label{thm:classicalReineck}
Let $X$ be a smooth vectorfield on a Riemannian manifold $M$, and let $S$ be an isolated invariant set in the flow generated by $X$, with isolating neighborhood $N$. Then $S$ can be continued to an isolated invariant set in a gradient flow without changing $X$ on $M\setminus N$.
\end{theorem}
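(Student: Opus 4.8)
The strategy will be to reduce everything to a single explicit continuation. It suffices to construct a continuous family $(X_\lambda)_{\lambda\in[0,1]}$ of $C^1$ vector fields with $X_0=X$, with $X_\lambda\equiv X$ on $M\setminus N$ for every $\lambda$, such that $N$ is an isolating neighbourhood for every $X_\lambda$, and such that $X_1$ coincides with a negative gradient field on a neighbourhood of $\invariant(N,X_1)$. Granting this, the constant family $\{N\}$ of isolating neighbourhoods exhibits $\invariant(N,X_\lambda)$ as a continuation from $S=\invariant(N,X)$ to an isolated invariant set of a gradient flow, and $X$ is untouched off $N$.

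\textbf{The construction.} By Conley's theory of isolated invariant sets I would first choose a smooth Lyapunov function $g\colon N\to\R$ for $S$, so that $dg_x(X(x))<0$ for $x\in N\setminus S$ and $dg_x(X(x))\le 0$ for $x\in S$. Note that $\nabla g$ has no zero on $N\setminus S$: a zero at a non-stationary point would contradict $dg(X)<0$, while every stationary point of $X$ lying in $N$ belongs to $\invariant(N,X)=S$. Put $c_0=\min_S g$ and $c_1=\max_S g$. Next, fix a small isolating block $B$ for $S$ with $\overline B\subseteq\interior N$ and $g(\partial B)\cap[c_0,c_1]=\varnothing$, and a smooth cutoff $\chi\colon M\to[0,1]$ supported in $\interior B$ with $\chi\equiv 1$ on a neighbourhood of the compact set $g^{-1}([c_0,c_1])\cap B$. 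Define
\[
  X_\lambda \;:=\; (1-\lambda\chi)\,X-\lambda\chi\,\nabla g,\qquad\lambda\in[0,1].
\]
Then $X_0=X$, each $X_\lambda$ equals $X$ outside $\operatorname{supp}\chi\subseteq\interior B\subseteq N$, and $X_1=-\nabla g$ on $\{\chi\equiv 1\}$.

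\textbf{Verification.} The identity
\[
  dg(X_\lambda)=(1-\lambda\chi)\,dg(X)-\lambda\chi\,|\nabla g|^2
\]
exhibits $dg(X_\lambda)$ as a sum of two terms that are $\le 0$ on $N$ and cannot both vanish on $N\setminus S$: where $\lambda\chi<1$ the first is strictly negative, and where $\lambda\chi=1$ the second equals $-|\nabla g|^2<0$. Hence $g$ is a strict Lyapunov function for every $X_\lambda$, with $\{dg(X_\lambda)=0\}\cap N\subseteq S$. Consequently along any complete $X_\lambda$-orbit in $N$ the function $g$ is non-increasing and bounded, and the $\alpha$- and $\omega$-limit sets lie in $S$, so the orbit takes $g$-values in $[c_0,c_1]$ only; since $g(\partial B)\cap[c_0,c_1]=\varnothing$ and $X_\lambda$ agrees with $X$ near $\partial B$ (so $B$ is still an isolating block for $X_\lambda$), such an orbit cannot meet $\partial B$. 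Therefore $\invariant(N,X_\lambda)=\invariant(B,X_\lambda)\subseteq\interior B\subseteq\interior N$, i.e.\ $N$ isolates every $X_\lambda$. Finally $\invariant(N,X_1)\subseteq g^{-1}([c_0,c_1])\cap\interior B\subseteq\{\chi\equiv 1\}$, so $X_1=-\nabla g$ on a neighbourhood of its invariant set, which is precisely the required gradient behaviour.

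\textbf{The main obstacle.} The skeleton above is short; the technical heart is the highlighted input to the construction: producing, inside the \emph{given} $N$ (which one is not allowed to shrink away from), a Lyapunov function together with an isolating block $B$ at positive distance from $\partial N$ on whose boundary $g$ misses the critical interval $[c_0,c_1]$. This is exactly the device reconciling ``leave $X$ alone off $N$'' with ``be a genuine gradient near $S$'': the gradient surgery must be confined to a block strictly interior to $N$, and one must know beforehand that the continued invariant set cannot escape that block. Granting such a block --- which is standard Conley block theory --- the convexity estimate for $dg(X_\lambda)$, the persistence of an isolating block under perturbations supported in its interior, and the identification $\invariant(N,X_\lambda)=\invariant(B,X_\lambda)$ are all routine. (Here ``gradient flow'' is meant near the continued invariant set: $X_1$ equals $X$ far from $S$ and need not be globally a gradient.)
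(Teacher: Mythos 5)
The paper cites this theorem from Reineck~\cite{reineck1991} without supplying a proof, so there is no in-text argument to compare against; here is an assessment of yours on its own merits. Your deformation $X_\lambda=(1-\lambda\chi)X-\lambda\chi\nabla g$ and the estimate $dg(X_\lambda)=(1-\lambda\chi)\,dg(X)-\lambda\chi\,|\nabla g|^2<0$ on $N\setminus S$ are correct, and the overall shape of the argument is the right one. But the step you single out as the ``main obstacle'' and then grant as standard --- the existence of an isolating block $B$ with $\overline B\subset\interior N$ and $g(\partial B)\cap[c_0,c_1]=\varnothing$ --- is not a standard fact and is false in general. Whenever $S$ contains a hyperbolic equilibrium $p$ with nontrivial stable and unstable directions, every smooth Lyapunov function $g$ satisfies $dg_p=0$ (otherwise $dg(X)<0$ would fail on some ray through $p$) and has an indefinite Hessian at $p$: along $W^s(p)\setminus\{p\}$ the value $g$ decreases to $g(p)$ in forward time, so $g>g(p)$ there, while along $W^u(p)\setminus\{p\}$ one has $g<g(p)$. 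Hence $g^{-1}(g(p))$ is, near $p$, a cone through $p$, and the boundary of \emph{every} neighbourhood $B$ of $p$ meets this cone, forcing $g(p)\in g(\partial B)\cap[c_0,c_1]$. The planar field $X(x,y)=(x+y^{3},-y)$, which is not a gradient and has a hyperbolic saddle at the origin, already defeats your block, for any choice of Lyapunov function.

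Since the level-set condition $g(\partial B)\cap[c_0,c_1]=\varnothing$ is what your verification relies on twice --- once to show that complete $X_\lambda$-orbits in $N$ cannot cross $\partial B$ (so that $\invariant(N,X_\lambda)=\invariant(B,X_\lambda)\subset\interior B$), and once to locate $\invariant(N,X_1)$ inside $\{\chi\equiv 1\}$, where $X_1$ is genuinely a gradient --- both conclusions are left without support. A correct argument cannot pin down the continued invariant set by level sets of $g$; one must use the entrance/exit decomposition of $\partial B$ directly (since $X_\lambda=X$ near $\partial B$, the isolating-block property persists and yields $\invariant(B,X_\lambda)\subset\interior B$ without any reference to $g$), and then a genuinely different device is needed for the inclusion $\invariant(N,X_1)\subset\{\chi\equiv 1\}$. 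That gap is precisely where the real content of Reineck's theorem lies, and it is not closed here.
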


We say that a function $b$ has finite dimensional support if there exists an $n$ such that $b(P_n(x))=x$ for all $x$. A function with finite-dimensional support has a compact gradient. Here we prove an analogue of Reineck's theorem for $\ls$-flows.

\begin{theorem}[Reineck's Theorem for $\ls$-flows]
\label{theorem:LSReineck}
Let $F = L + K$ be a continuously differentiable $\ls$-vector field and let $U$ be an isolating neighborhood for the induced flow $\phi$. Then $\phi$ is related by continuation in $U$ to a gradient flow of a function $f:U\rightarrow \mR$ of the form 
$$f(x) = \frac{1}{2} \ip{Lx}{x} - b(x),$$ 
where $b$ has finite-dimensional support.
\end{theorem}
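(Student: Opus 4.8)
The plan is to mimic Reineck's original strategy in the $\ls$-setting, the main point being to produce a genuinely compact perturbation so that the resulting gradient flow is again an $\ls$-flow. First I would use the continuation principle (Theorem~\ref{thm:continuation}) together with Lemma~\ref{lemma:rho} to reduce to a convenient normal form near $S = \invariant(U,\phi)$: pick $\rho>0$ so that $U_\rho$ is still isolating with $\invariant(U_\rho,\phi) = \invariant(U,\phi)$, and observe that it suffices to modify $F$ only on $U_\rho$, keeping $F = L+K$ outside. The idea is to interpolate, through $\ls$-vector fields, between $F$ and a vector field which on a slightly smaller neighborhood equals the negative gradient (with respect to the Hilbert inner product) of the function $f(x) = \tfrac12\ip{Lx}{x} - b(x)$; outside $U_\rho$ all the vector fields in the homotopy agree with $F$, so $U$ remains isolating throughout and Theorem~\ref{thm:continuation} applies provided the homotopy stays within $\ls$-flows and the closeness conditions (A\ref{enum:a2})--(A\ref{enum:a1}) can be arranged by subdividing the homotopy into small steps.

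The construction of $b$ proceeds by finite-dimensional approximation. Since $K$ is completely continuous, for any $\varepsilon>0$ there is $n$ and a smooth map $K_n$ with finite-dimensional support (values and essential dependence in $E_n$, obtained by composing a finite-rank approximation with $P_n$ and smoothing) such that $\sup_{x\in \cl{U_\rho}}\norm{K(x)-K_n(x)} < \varepsilon$. The perturbed vector field $L + K_n$ is then an $\ls$-vector field, $C^1$, $\varepsilon$-close to $F$ on $U_\rho$, and — crucially — its behavior relevant to the dynamics is governed by the finite-dimensional system on $E_n$ coupled to the linear flow $e^{tL}$ on $E_n^\perp$. One then applies the classical Reineck theorem (Theorem~\ref{thm:classicalReineck}) to the restriction of $L + K_n$ to the finite-dimensional invariant subspace $E_n$ — more precisely to the $E_n$-component of the flow on the isolating neighborhood $U_\rho \cap E_n$ — to deform it, rel boundary, to a gradient flow of some $f_n: E_n\cap U_\rho \to \mR$; extending by the quadratic form $\tfrac12\ip{Lx}{x}$ in the $E_n^\perp$ directions gives the desired $f(x) = \tfrac12\ip{Lx}{x} - b(x)$ with $b = b_n$ supported on $E_n$. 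The negative gradient of this $f$ is $-Lx + \nabla b(x)$, which is an $\ls$-vector field since $\nabla b$ has compact (indeed finite-rank) image.

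The main obstacle, and the step requiring the most care, is showing that the whole deformation can be realized \emph{through $\ls$-flows with $U$ as a common isolating neighborhood}, and that the isolated invariant set is not lost when passing from $F$ to $L+K_n$. For the latter one argues as in Lemma~\ref{lemma:rho}(ii)--(iii) and Lemma~\ref{lemma:common_nbhd}: Property-\propC (Lemma~\ref{lemma:C}) plus the $\varepsilon$-closeness forces $\invariant(U_\rho, \phi_{K_n}) \subset \interior U$ once $\varepsilon$ is small relative to $\rho$, using that a sequence of trajectories staying in $U_\rho$ has a convergent subsequence limiting into $\invariant(U,\phi)$. For the former, the homotopy $F \rightsquigarrow L+K_n \rightsquigarrow$ (gradient flow) must be broken into finitely many sub-homotopies, each short enough that the hypotheses (A\ref{enum:a2})--(A\ref{enum:a1}) of Theorem~\ref{thm:continuation} hold between consecutive stages, and one must check the Reineck deformation on $E_n$ depends continuously on parameters and extends (by freezing the $E_n^\perp$ dynamics as the linear flow of $L$) to a continuous family of globally Lipschitz $\ls$-vector fields agreeing with $F$ outside $U_\rho$. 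Modulo these verifications, chaining the continuation isomorphisms yields $\ch(U,\phi) \cong \ch(U, -\nabla f)$, which is the assertion.
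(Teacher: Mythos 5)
Your overall strategy is the one the paper follows: project the completely continuous part to a finite-dimensional subspace $E_n$, apply Reineck's Theorem~\ref{thm:classicalReineck} to the restricted finite-dimensional flow on $U\cap E_n$, and extend by the quadratic form $\tfrac12\ip{Ly}{y}$ in the $E_n^\perp$-directions. The application of classical Reineck and the extension to the product flow are fine.

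However, there is a genuine gap at the key step where you pass from $F = L+K$ to the vector field with finite-dimensional support. You assert that for any $\varepsilon>0$ there is a smooth $K_n$ with ``values and essential dependence in $E_n$'' (i.e.\ of the form $P_n K_n P_n$) such that $\sup_{x\in \cl{U_\rho}}\norm{K(x)-K_n(x)}<\varepsilon$, and you then rely on this closeness (and on subdividing the homotopy) to invoke the continuation principle. This closeness claim is false. Complete continuity of $K$ gives $\norm{K - P_n K}_{\sup}\to 0$ on bounded sets (smallness in the \emph{range}), but it does \emph{not} give $\norm{K - K\circ P_n}_{\sup}\to 0$, i.e.\ it does not control the dependence of $K$ on the $E_n^\perp$-coordinates. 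The paper records the explicit counterexample $K(x)=\norm{x}v$: on the unit ball $\norm{K-P_nKP_n}_{\sup}$ stays bounded away from $0$ for all $n$, so Proposition~\ref{prop:close_fields} does not apply, and no amount of subdividing the homotopy into short pieces changes the fact that the endpoint $L+P_nKP_n$ is not close to $F$ in the sup-norm. The paper gets around this in two stages: Corollary~\ref{cor:finite_range} handles $L+P_nK$ by $\varepsilon$-closeness, and then Proposition~\ref{prop:KP_n} handles the passage to $L+P_nKP_n$ by a different argument that does \emph{not} use sup-norm closeness. Instead it constructs an explicit one-parameter family $H(\lambda,\cdot)$, verifies via Proposition~\ref{prop:maurin} that it induces a continuous family of local flows (the crucial input being compactness of $DK(x)^*$, so that $\norm{DK(x)P_n-DK(x)}\to 0$ in operator norm), and then uses compactness of the isolated invariant set together with openness of isolation to conclude that $U$ remains isolating for $\lambda$ small. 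You would need to replace your $\varepsilon$-closeness argument by something of this kind; as written, the step fails.
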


\begin{proposition} \label{prop:close_fields}
Let $U$ be an isolating neighborhood of an $\ls$-flow $\phi_0$ generated by the $\ls$-vector field $F_0$. Then there exists an $\epsilon>0$, such that if an $\ls$-vector field $F_1$ satisfies
$$
\norm{F_0(x) - F_1(x)} < \epsilon,\qquad \text{for all}\qquad x\in U
$$
then the flows of $\phi_0$ and $\phi_1$, generated by the vector field $F_1$ are related by continuation.
\end{proposition}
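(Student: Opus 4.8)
The plan is to construct an explicit continuation through $\ls$-flows from $\phi_0$ to $\phi_1$ by a straight-line homotopy of the vector fields, and to verify that $U$ remains an isolating neighborhood along the whole path, so that the family satisfies the definition of ``related by continuation in $U$''. Concretely, set $F_s := (1-s)F_0 + sF_1 = L + \left((1-s)K_0 + sK_1\right)$ for $s\in[0,1]$. Since each $F_i$ is of the form $L + K_i$ with $K_i$ completely continuous and globally Lipschitz, so is each $F_s$ (a convex combination of completely continuous maps is completely continuous, and the Lipschitz constants are bounded by $\max$ of the two), hence each $F_s$ is an $\ls$-vector field generating a global $\ls$-flow $\phi_s$. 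The induced map $H(s,x,t) := \phi_s(t,x)$ is continuous in all variables by continuous dependence of solutions of ODEs on parameters (the right-hand side depends continuously and Lipschitz-uniformly on $s$). So the only real content is the uniform isolation statement.

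The first key step is to obtain, from Lemma~\ref{lemma:rho} applied to $\phi_0$, constants $T>0$ and $\rho>0$ such that $U_\rho$ is an isolating neighborhood for $\phi_0$ with $\invariant(U_\rho,\phi_0) = \invariant(U,\phi_0)$ and $G^T_{\phi_0}(U_\rho)\subset\interior U$. The second step is an a-priori closeness estimate: I want to choose $\epsilon>0$ small enough that $\norm{F_0(x)-F_1(x)}<\epsilon$ on $U$ forces $\norm{\phi_0(t,x)-\phi_s(t,x)} < \rho/2$ for all $s\in[0,1]$, all $|t|\le 2T$, and all $x$ in (a neighborhood of) $U$ — at least as long as both orbits stay in $U_\rho$. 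This is a Gronwall argument: the difference $u(t) = \phi_0(t,x)-\phi_s(t,x)$ satisfies $\dot u = F_0(\phi_0)-F_s(\phi_s)$, which splits as $\left(F_0(\phi_0)-F_0(\phi_s)\right) + \left(F_0(\phi_s)-F_s(\phi_s)\right)$; the first term is bounded by $\mathrm{Lip}(F_0)\norm{u}$ and the second by $s\norm{F_0-F_1}\le\epsilon$ whenever $\phi_s(t,x)\in U$. So $\norm{u(t)}\le \epsilon\,(e^{\mathrm{Lip}(F_0)|t|}-1)/\mathrm{Lip}(F_0)$, and choosing $\epsilon < \tfrac{\rho}{2}\cdot\mathrm{Lip}(F_0)\,(e^{2T\,\mathrm{Lip}(F_0)}-1)^{-1}$ gives the bound — with the standard continuity/bootstrap argument to justify that the orbit segments do not leave $U_\rho$ prematurely.

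Given this, the third step is exactly the verification already carried out in Lemma~\ref{lemma:common_nbhd}: the closeness bound (which is precisely hypothesis (A\ref{enum:a1}) with $\phi=\phi_0$, $\psi=\phi_s$) yields
$$
\invariant(U,\phi_s)\subset \invariant(U_{\rho/2},\phi_s)\subset G^T_{\phi_s}(U_{\rho/2})\subset G^T_{\phi_0}(U_\rho)\subset\interior U,
$$
so $U$ is an isolating neighborhood for every $\phi_s$, $s\in[0,1]$. Together with the continuity of $H$ and the fact that each $H(s,\cdot,\cdot)$ is an $\ls$-flow, this shows $\phi_0$ and $\phi_1$ are related by continuation in $U$, as claimed.

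The main obstacle, and the only place requiring care, is the Gronwall/bootstrap step: one must run the estimate only while both orbit segments remain inside $U_\rho$ and then argue — by taking $\epsilon$ small and using that $G^T_{\phi_0}(U_\rho)\subset\interior U$ so there is genuine room — that they cannot in fact escape $U_\rho$ on the time interval $[-2T,2T]$, closing the loop. One subtlety is that the statement of Proposition~\ref{prop:close_fields} only hypothesizes the bound $\norm{F_0-F_1}<\epsilon$ on $U$ itself, not on $U_\rho$; since $\mH$ is not locally compact this needs the globally Lipschitz hypothesis (Remark~\ref{rem:lipschitz}) to control $F_1$ just outside $U$, or alternatively one shrinks to work with the slightly smaller neighborhood on which the a-priori bound has already confined the orbits. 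Everything else (complete continuity of the convex combination, continuous parameter dependence) is routine.
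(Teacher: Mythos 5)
Your proposal matches the paper's proof almost step for step: a straight-line homotopy $F_s=(1-s)F_0+sF_1$ of $\ls$-vector fields, a Gronwall estimate $\norm{\phi_0(t,p)-\phi_1(t,p)}\le\epsilon T e^{cT}$ on $[-T,T]$, and the choice of $\rho,T,\epsilon$ via Lemma~\ref{lemma:rho} together with the chain of inclusions $\inv{\phi_s,U}\subset G^T_{\phi_s}(U)\subset G^T_{\phi_0}(U_{\rho/2})\subset G^T_{\phi_0}(U_\rho)\subset\interior U$. Note that the bootstrap worry you raise dissolves exactly the way you sketch in your second alternative (not the first, since global Lipschitzness controls nothing about $F_0-F_1$ off $U$): the chain starts from $x\in G^T_{\phi_s}(U)$, so the $\phi_s$-orbit is already confined to $U$ while the Gronwall integrand is being estimated, and the conclusion about the $\phi_0$-orbit landing in $U_\rho$ comes out a posteriori.
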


\begin{proof}

Let $c$ be a Lipschitz constant for $F_0$. Fix $p \in U$ and let $x_i$, $i\in\{0,1\}$, be the solutions to $\dot{x}_i(s) = F_i(x_i(s))$ with $x_i(0) = p$.
\begin{align*}
  \norm{x_0(t) - x_1(t)}  &= \abs{\int_0^t [F_0(x_0(s))- F_1(x_1(s))] \, ds} \\
  &\leq \int_0^t \norm{F_0(x_0(s))-F_0(x_1(s))} + \norm{F_0(x_1(s))-F_1(x_1(s))} \, ds \\
  &\leq \int_0^t (c  \norm{x_0(s) - x_1(s)} + \epsilon ) \, ds.
\end{align*}
By Gronwall's inequality we get
$$\norm{x_0(t) - x_1(t)} \leq \epsilon t e^{ct} \leq \epsilon T e^{cT},$$
for all $t \in [-T,T]$.
We can choose $\rho$, $T$ and $\epsilon$ such that (compare Lemmas~\ref{lemma:rho} and~\ref{lemma:common_nbhd})
 \begin{enumerate}
 \item $\inv{\phi_0,U_{\rho}} = \inv{\phi_0,U}$;
 \item $G^T_{\phi_0}(U_{\rho}) \subset \interior U$;
 \item $\epsilon T e^{cT} < \frac{\rho}{2}$.
 \end{enumerate}
Then $U$ is also an isolating neighborhood for each $\ls$-flow $\phi_s$ induced by $F_s(x) := (1-s)F_0(x) + sF_1(x)$. To see this note that 
$$\inv{\phi_s,U} \subset G^T_{\phi_s}(U) \subset G^T_{\phi_1}(U_{\frac{\rho}{2}}) \subset G^T_{\phi_0}(U_\rho) \subset \interior U.$$
\end{proof}

Recall that we assumed the existence of $\{E_n\}_{n\in\mN}$, an increasing sequence of finite-dimensional subspaces of $\mH$ such that $L(E_n) = E_n$ and $\cl{\bigcup_{n \in \mathbb{N}} E_n} = \mH $. Denote by $P_n:\mH \to \mH$ the orthogonal projection onto $E_n$. From the above proposition we have the following corollary.

\begin{corollary}\label{cor:finite_range}
For sufficiently large $n$, the flows induced by the $\ls$-vector fields $F = L + K$ and $F_n = L + P_nK$ are related by continuation.
\end{corollary}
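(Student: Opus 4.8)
The plan is to apply Proposition~\ref{prop:close_fields} with $F_0 = F = L+K$ and $F_1 = F_n = L + P_nK$, so the only thing that needs to be checked is that $\norm{F(x) - F_n(x)} = \norm{K(x) - P_nK(x)}$ can be made smaller than the $\epsilon>0$ produced by that proposition, uniformly for $x$ in the isolating neighborhood $U$. First I would fix the $\epsilon>0$ associated to $F_0$ and $U$ by Proposition~\ref{prop:close_fields}. Since $U$ is bounded (isolating neighborhoods are closed and bounded) and $K$ is completely continuous, the image $K(U)$ is precompact in $\mH$. Hence its closure $\cl{K(U)}$ is compact.

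The key estimate is then a standard compactness argument: for a compact set $C = \cl{K(U)} \subset \mH = \cl{\bigcup_n E_n}$, the orthogonal projections $P_n$ converge to the identity uniformly on $C$. Indeed, for a single $y\in\mH$ we have $\norm{y - P_ny}\to 0$ as $n\to\infty$ because $\cl{\bigcup_n E_n} = \mH$ and the $E_n$ are increasing; the maps $y\mapsto \norm{y - P_ny}$ are $1$-Lipschitz and decreasing in $n$, so Dini's theorem (or a direct $\epsilon$-net argument: cover $C$ by finitely many balls of radius $\epsilon/3$, control the centers) upgrades pointwise convergence to uniform convergence on the compact set $C$. Therefore there is an $n_0$ such that $\norm{y - P_n y} < \epsilon$ for all $y \in C$ and all $n \geq n_0$. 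In particular $\norm{K(x) - P_nK(x)} < \epsilon$ for all $x\in U$ and all $n\geq n_0$.

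It remains to note that $F_n = L + P_nK$ is a genuine $\ls$-vector field: $P_nK$ is completely continuous (composition of the completely continuous $K$ with the bounded operator $P_n$ sends bounded sets to precompact sets), and $F_n$ is globally Lipschitz since $L$ is bounded and $P_nK$ is globally Lipschitz as $P_n$ is $1$-Lipschitz and $K$ is globally Lipschitz. Hence for $n\geq n_0$ Proposition~\ref{prop:close_fields} applies and shows that the flows induced by $F$ and $F_n$ are related by continuation in $U$.

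I do not expect a serious obstacle here; the statement is essentially a packaging of Proposition~\ref{prop:close_fields} together with the elementary fact that projections onto an exhausting sequence of subspaces converge uniformly on compact sets. The only mild subtlety worth stating carefully is the passage from pointwise to uniform convergence of $\norm{\,\cdot\, - P_n(\cdot)}$ on the compact set $\cl{K(U)}$, and the verification that the truncated field is still an $\ls$-vector field so that the hypotheses of Proposition~\ref{prop:close_fields} are literally met.
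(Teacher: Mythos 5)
Your proof is correct and matches the argument the paper leaves implicit (the paper states the corollary as an immediate consequence of Proposition~\ref{prop:close_fields} without spelling out the compactness step). The chain you give is exactly the intended one: $U$ bounded and $K$ completely continuous make $\cl{K(U)}$ compact, $P_n\to\mathrm{id}$ uniformly on compact sets, so $\sup_{x\in U}\norm{K(x)-P_nK(x)}<\epsilon$ for large $n$, and $F_n=L+P_nK$ is still an $\ls$-vector field so Proposition~\ref{prop:close_fields} applies.
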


Pick a nonzero vector $v \in \mathbb{H}$ and put $K(x) = \norm{x}v$. Then on a unit ball $\norm{K - P_nKP_n}_{sup} = 1$. Therefore Proposition~\ref{prop:close_fields} does not show that the flows induced by $F = L + K$ and $F_n =L+P_nKP_n$ are related by continuation. This is true however and we adopt the technique from~\cite[Lemma 4.1]{gip} to show this. We will need the following result.

\begin{proposition}[{\cite[Theorem IX.3.2]{maurin1991analiza}}]\label{prop:maurin} 
Let $U$ be open subset of Banach space $X$ and $V$ an open subset of Banach space $Y$. Moreover, suppose that $f:U \times V \to X$ is continuous and is derivatiable in the $X$ direction and that the map
$$U \times V \to X \ni (x,y) \mapsto f'_X(x,y) \in L(X,X)$$
is continuous. Then $f$ induces a continuous family of local flows.
\end{proposition}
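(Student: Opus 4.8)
The plan is to run the classical Picard--Lindel\"of construction of a local flow together with a Gr\"onwall continuous-dependence estimate; the single delicate point is that closed balls in $X$ and $Y$ are not compact, so continuity of $f$ does not by itself yield uniform continuity on bounded sets, and one must localize near the (compact) trajectory of a reference solution.

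\emph{Step 1 (local data from the $C^1$ hypothesis).} Since $f$ and $f'_X$ are continuous they are locally bounded, so around any $(x_0,y_0)\in U\times V$ one can choose $r>0$ and $M,C<\infty$ with $\bar B(x_0,2r)\times\bar B(y_0,2r)\subset U\times V$ and $\|f'_X\|\le M$, $\|f\|\le C$ on that set. The mean value inequality in Banach spaces then gives $\|f(x_1,y)-f(x_2,y)\|\le M\|x_1-x_2\|$ for $x_1,x_2\in\bar B(x_0,2r)$, $y\in\bar B(y_0,2r)$: a Lipschitz-in-$x$ bound uniform in $y$. With these two estimates the contraction mapping theorem applied to $x(\cdot)\mapsto \xi+\int_0^{\cdot}f(x(s),y)\,ds$ on $C([-a,a],\bar B(x_0,r))$, for $a>0$ small depending only on $M,C,r$, produces a unique local solution whenever $\xi$ is near $x_0$ and $y$ near $y_0$; this defines $\phi_y(t,\xi)$, and the usual maximality/uniqueness arguments make $\mathcal D:=\bigcup_{y\in V}\{y\}\times\operatorname{dom}\phi_y$ open in $V\times\R\times U$.

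\emph{Step 2 (joint continuity).} Fix $(y_0,t_0,\xi_0)\in\mathcal D$ with $t_0\ge 0$ (the case $t_0<0$ is symmetric), and let $x_0(s)=\phi_{y_0}(s,\xi_0)$, defined on $[0,T]$ for some $T>t_0$. The trajectory $\Gamma:=\{(x_0(s),y_0):0\le s\le t_0\}$ is \emph{compact} in $U\times V$, and the one genuinely analytic ingredient is the elementary fact that a continuous map is uniformly continuous near a compact set: for every $\epsilon>0$ there is $\delta>0$ such that $p\in\Gamma$, $q\in U\times V$, $\|p-q\|<\delta$ imply $\|f(p)-f(q)\|<\epsilon$ (a routine sequential-compactness contradiction). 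Shrinking $\delta$ so that moreover $\|f'_X\|\le M$ on the $\delta$-tube about $\Gamma$, compare $x_0$ with $\tilde x(s):=\phi_y(s,\xi)$ for $(\xi,y)$ near $(\xi_0,y_0)$. As long as $\tilde x$ is defined and stays in the $\tfrac\delta2$-tube about $\Gamma$, setting $e(t):=\|x_0(t)-\tilde x(t)\|$ one has
\[
\|f(x_0(s),y_0)-f(\tilde x(s),y)\|\le M\,e(s)+\|f(\tilde x(s),y_0)-f(\tilde x(s),y)\|\le M\,e(s)+2\epsilon
\]
(the last inequality because both $(\tilde x(s),y_0)$ and $(\tilde x(s),y)$ lie within $\delta$ of $(x_0(s),y_0)\in\Gamma$ once $\|y-y_0\|<\tfrac\delta2$), whence $e(t)\le\|\xi_0-\xi\|+2\epsilon T+M\int_0^t e(s)\,ds$ and, by Gr\"onwall, $e(t)\le(\|\xi_0-\xi\|+2\epsilon T)e^{MT}$. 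A standard open--closed continuation argument promotes this a priori bound to an honest one: choosing first $\epsilon$ and then $\xi$ so small that $(\|\xi_0-\xi\|+2\epsilon T)e^{MT}<\tfrac\delta4$, the solution $\tilde x$ cannot escape the $\tfrac\delta2$-tube and, staying in a bounded set on which $f$ is bounded, exists on all of $[0,T]$, which contains a neighborhood of $t_0$. Then $\|\phi_y(t,\xi)-\phi_{y_0}(t_0,\xi_0)\|\le e(t)+\|x_0(t)-x_0(t_0)\|$, and both terms are small for $(y,t,\xi)$ near $(y_0,t_0,\xi_0)$, which is the claimed continuity of the family of local flows.

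\emph{Main obstacle.} Everything apart from the compactness issue is textbook Banach-space ODE theory (Picard iteration, Gr\"onwall, open--closed continuation). The device that circumvents the lack of local compactness is never to demand uniform control of $f$ on a ball, only on a thin tube around the compact reference trajectory $\Gamma$, via the uniform-continuity-near-a-compact-set lemma. The hypothesis that $f$ be $C^1$ in the $X$-direction with continuous derivative is used only to produce the locally uniform Lipschitz constant $M$; it could be weakened to ``$f$ locally Lipschitz in $x$, locally uniformly in $(x,y)$'', but the stated form is the one that is readily checked in applications.
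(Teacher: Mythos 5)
The paper does not prove this proposition; it is cited without proof from Maurin's textbook \cite[Theorem~IX.3.2]{maurin1991analiza}, so there is no ``paper's own proof'' to compare against. Your argument is a correct and self-contained proof along the standard lines one would expect to find in the cited source: Picard--Lindel\"of local existence from the locally uniform $x$-Lipschitz bound supplied by continuity of $f'_X$, followed by a Gr\"onwall continuous-dependence estimate. You have correctly identified and handled the one non-routine point in the Banach-space setting --- that closed balls are not compact --- by replacing ``uniform continuity on a ball'' with ``uniform continuity on a tube around the compact reference trajectory $\Gamma$'', which is exactly the localization the absence of local compactness forces. The open--closed continuation step and the Gr\"onwall inequality are applied correctly, and the only purpose served by the $C^1$-in-$x$ hypothesis (to manufacture the locally uniform Lipschitz constant via the mean value inequality) is accurately flagged. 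No gap.
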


\begin{proposition} \label{prop:KP_n}
Let $F = L + K$ be a continuously differentiable $\ls$-vector field and let $U$ be an isolating neighborhood. Then for sufficiently large $n$ the flows induced by $F$ and $F_n = L + P_nKP_n$ are related by continuation in $U$.
\end{proposition}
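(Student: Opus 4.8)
The plan is to interpolate between $F$ and $F_n = L + P_n K P_n$ not by a straight-line homotopy of vector fields (which fails, as the preceding example shows), but by a two-stage homotopy: first localize the perturbation to act only on the $E_n$-component of the \emph{input}, then on the output as well, applying Corollary~\ref{cor:finite_range} and Proposition~\ref{prop:close_fields} where the perturbations \emph{are} small and handling the one genuinely large change via a change of variables. Concretely, by Corollary~\ref{cor:finite_range} the flow of $F = L+K$ is related by continuation in $U$ to the flow of $F' := L + P_n K$ for $n$ large. So it suffices to connect $F' = L + P_n K$ to $F_n = L + P_n K P_n$. Note $P_n K$ and $P_n K P_n$ both have range in the finite-dimensional space $E_n$, and they differ only in whether $K$ is evaluated at $x$ or at $P_n x$.

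The key step is to exploit the splitting $\mH = E_n \oplus E_n^{\perp}$, which is $L$-invariant by hypothesis. Write $x = (u,w)$ with $u = P_n x \in E_n$, $w = (I-P_n)x \in E_n^\perp$, and write $L = L_n \oplus L_n^\perp$. Then the flow of $F' = L+P_nK$ has the form $\dot u = L_n u + P_n K(u,w)$, $\dot w = L_n^\perp w$, so the $w$-component is \emph{already} a decoupled linear flow $w(t) = e^{tL_n^\perp} w_0$, independent of the homotopy parameter and of $K$. Only the $u$-equation changes as we pass from $F'$ to $F_n$: the term $P_n K(u, e^{tL_n^\perp}w_0)$ becomes $P_n K(u, 0)$. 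I would now introduce the homotopy $G_s(x) := L x + P_n K\bigl(P_n x + (1-s)(I-P_n)x\bigr)$, so $G_0 = F'$ and $G_1 = F_n$. Each $G_s$ is an $\ls$-vector field (it is $L$ plus a completely continuous, finite-rank-valued, continuously differentiable map, globally Lipschitz after cutting off outside a neighborhood of $U$ as in Remark~\ref{rem:lipschitz}), and the family is continuous in $(s,x)$ together with its derivative in the $x$-direction, so by Proposition~\ref{prop:maurin} it induces a continuous family of local flows. It remains only to check that $U$ is an isolating neighborhood for every $G_s$, and here the point is that the invariant set of $G_s$ in $U$ \emph{does not move}: if $x(\cdot)$ is a bounded orbit of $G_s$ lying in $U$ for all time, its $E_n^\perp$-component solves the fixed linear equation $\dot w = L_n^\perp w$ and stays bounded, which (since $L_n^\perp$ is hyperbolic on $E_n^\perp$, as $L$ is invertible and selfadjoint) forces $w \equiv 0$ on the part of the spectrum where...

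Actually, to make the isolation argument uniform I would instead argue as in the proof of Proposition~\ref{prop:close_fields} and Lemma~\ref{lemma:common_nbhd}: using the uniform bound on $\norm{G_s(x) - G_0(x)}$ on $U$ — which is \emph{not} small pointwise, so a direct Gronwall estimate is unavailable — I replace it by the sharper observation that $\invariant(U, G_s)$ is contained in $\{w = 0\} \cap U$ for all $s$ simultaneously, by the decoupled-linear-flow argument above, and on $\{w=0\}$ all the vector fields $G_s$ restrict to the \emph{same} finite-dimensional vector field $u \mapsto L_n u + P_n K(u,0)$; hence $\invariant(U,G_s)$ is independent of $s$ and equals $\invariant(U, F_n)$, and the isolation property $\invariant(U,G_s) \subset \interior U$ holds for all $s$ at once. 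Composing this continuation with the one from Corollary~\ref{cor:finite_range} gives the claim.

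The main obstacle I anticipate is the isolation/compactness bookkeeping: one must be careful that the $\{w=0\}$ argument really applies to \emph{every} complete bounded orbit in $U$ (not just to equilibria), that Property-\propC holds for each $G_s$ (immediate from Lemma~\ref{lemma:C} once each $G_s$ is seen to be an $\ls$-flow), and that the local flows are defined long enough on a neighborhood of $U$ — which is why one cuts off $K$ outside a bounded set to invoke Remark~\ref{rem:lipschitz} and get global flows, exactly as in the continuation setup of Section~\ref{sec:continuation}. The finite-rank structure is what saves us: it turns the "large" perturbation into a change that is invisible on the invariant set, so no smallness hypothesis on $\norm{K - P_n K P_n}$ is needed, in contrast to Proposition~\ref{prop:close_fields}.
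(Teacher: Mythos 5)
Your proof is correct, but after the shared first reduction via Corollary~\ref{cor:finite_range} it takes a genuinely different route from the paper's. The paper also homotopes the inner projection, but does so along the \emph{entire} tail of projections at once: it defines a single family $H(\lambda,\cdot)=L+K\bigl(a_m(\lambda)P_{m+1}+b_m(\lambda)P_m\bigr)$ on $\lambda\in(\tfrac{1}{m+1},\tfrac1m]$ with $H(\lambda,\cdot)\to L+K$ as $\lambda\to 0$, checks via Proposition~\ref{prop:maurin} that this is a continuous family of local flows --- the delicate point being $\norm{DK(x)P_m-DK(x)}\to 0$, which is where the compactness of the adjoint $DK(x)^*$ enters --- and then uses compactness of the invariant set and openness of isolation near $\lambda=0$ to conclude that $U$ stays isolating for $\lambda\leq\tfrac1n$. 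You replace this perturbative openness argument with a structural one: along your straight-line homotopy $G_s$ in the input variable, the $E_n^\perp$-component of every solution obeys the fixed linear equation $\dot w=L|_{E_n^\perp}w$ (note $LE_n^\perp=E_n^\perp$ because $L$ is selfadjoint with $LE_n=E_n$, and $\sigma(L|_{E_n^\perp})\subset\sigma(L)$ is bounded away from $0$, so this flow is hyperbolic), hence every complete orbit in the bounded set $U$ has $w\equiv 0$; on $\{w=0\}$ all the $G_s$ coincide, so $\invariant(U,G_s)$ is literally the same set for every $s$ and lies in $\interior U$ because $U$ isolates for $G_0=L+P_nK$. What this buys: no smallness or openness-of-isolation argument is needed in the second stage, the ``sufficiently large $n$'' comes solely from Corollary~\ref{cor:finite_range}, and the operator-norm convergence $DK(x)P_m\to DK(x)$ is never used (indeed continuity of your flow family already follows from Gronwall for a uniformly Lipschitz family, so even the appeal to Proposition~\ref{prop:maurin} is optional). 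What the paper's version buys is a single homotopy that simultaneously certifies which $n$ works, at the cost of the $C^1$ and adjoint-compactness machinery. The only blemish in your write-up is the trailing false start at the end of the second paragraph, which your subsequent, correct isolation argument supersedes and which should simply be deleted.
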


\begin{proof}
By Corollary~\ref{cor:finite_range} we can assume without loss of generality that $P_nK = K$. Define $H: [0,1] \times U \to \mathbb{H}$ by $H(\lambda,\cdot) = L + K((1+n)(1-n\lambda)P_{n+1} + n[(n+1)\lambda - 1]P_n)$ for $\lambda \in (\frac{1}{n+1},\frac{1}{n}]$ and $H(0,\cdot)=L+K$. If $H$ induces a continuous family of local flows then, by the compactness of the isolated invariant set, there exists $s > 0$ such that  $U$ is an isolating neighborhood for $H(\lambda,\cdot)$ provided $\lambda \in [0,s)$. If this is the case, it is enough to take $n$ such that $\frac1n < s$. 

To show that $H$ indeed induces a continuous family of local flows we will check that $H$ satisfies the assumptions of Proposition~\ref{prop:maurin}. We therefore examine continuity of the map $(\lambda,x) \mapsto D_XH(\lambda,x)$. Let $(\lambda_n,x_n)$ be a sequence converging to $(\lambda,x)$. The only nontrivial case is to check that for $\lambda=0$ the sequence $D_XH(\lambda_n,x_n)$ tends to $D_XH(0,x)$. Without loss of generality we can assume  $\lambda_n = \frac{1}{n}$. We have
\begin{align*}
\Vert D_XH(\lambda_n,x_n)&- D_XH(0,x)\Vert = \norm{DK(P_n(x_n))P_n - DK(x)} \\
  &= \norm{DK(P_n(x_n))P_n - DK(x)P_n + DK(x)P_n - DK(x)} \\
  &\leq  \norm{DK(P_n(x_n))P_n - DK(x)P_n} +  \norm{DK(x)P_n - DK(x)} \\
  &\leq \norm{DK(P_n(x_n)) - DK(x)} + \norm{DK(x)P_n - DK(x)}.
\end{align*}
The sequence $P_n(x_n)$ converges to $x$ so that $\norm{DK(P_n(x_n)) - DK(x)}$ converges to $0$ by the assumption that $K$ is continuously differentiable. On the other hand 
$$\norm{DK(x)P_n - DK(x)}=\norm{P_nDK(x)^* - DK(x)^*} \to 0,$$ 
since the adjoint $DK(x)^*$ is compact.
\end{proof}

\begin{proof}[Proof of Theorem~\ref{theorem:LSReineck}]
By Proposition~\ref{prop:KP_n} there exists an $n\in\mN$ such that the flow $\f$ is related by continuation in $U$ to the product flow $(\psi,\psi^\perp)$. Here the product flow is induced by the vector field $F_n\:E_n\oplus (E_n)^\perp\to E_n\oplus (E_n)^\perp$ given by
$$F_n(x,y)=(Lx+P_nK(x),Ly).$$ 
Note that the space $E_n$ is finite-dimensional and that $U_n:=U\cap E_n$ is an isolating neighborhood for $\psi$. Then Theorem~\ref{thm:classicalReineck} states that the flow $\psi$ can be continued in $U_n$ to a gradient flow of a function defined on some neighborhood of $U_n$ in $E_n$ of the form 
$$\tilde f(x)=\frac12\ip{Lx}{x}+b(x).$$ 
It follows that the flow $(\psi,\psi^\perp)$ can be continued to the gradient flow of 
$$f(x,y)=\tilde f(x)+\frac12\ip{Ly}{y}.$$ 
\end{proof}

\section{Morse homology in Hilbert spaces}
\label{sec:localMorse}
We use the construction of Morse homology of~\cite{abbondandolomajerhomology}. These results have been extended beyond the flat Hilbert space setting~\cite{abbondandolomajercomplex}, which we will not use here. Define
$$
\mathcal{F}(L)=\{f\in C^2(E)\,|\, f(x)=\frac{1}{2}\langle Lx,x\rangle+b(x),\quad \nabla b \quad\text{is compact}\}.
$$
Here and below we always use the Hilbert metric to define the gradient flow. Assume that\footnote{In\cite{abbondandolomajerhomology} the Morse homology is defined for an action window $I$, which we do not need here. In their notation we use $I=\mR$, and write $H_q(f)$ for $H_q(f,\mR)$.}:
\begin{enumerate}
\item[(F1)] $f\in \mathcal{F}(L)$.
\item[(F2)] $f$ satisfies the Palais-Smale (PS) condition, i.e.~any sequence $\{x_n\}_{n\in\mN}\in \mH$ with $f(x_n)\rightarrow c$ and $\nabla f(x_n)\rightarrow 0$ has a convergent subsequence.
\item[(F3)] $f$ is a Morse function.
\item[(F4)] $f$ satisfies the Morse-Smale condition up to order $2$. That is, for any two critical points $x$ and $y$ with index difference\footnote{The Morse index $m_{E^-}(x)$ is defined as the relative dimension of the negative eigenspace of the Hessian at $x$ and $E^-$, see~\cite{abbondandolomajerhomology} for details.} $m_{E^-}(x)-m_{E^-}(y)\leq 2$ we have that $T_pW^u(x)+T_pW^s(y)=\mH$ for all $p\in W^u(x)\cap W^s(y)$.  
\item[(F5)] For every $c\in \mR$ and every $k\in \mZ$ the set $\mathrm{crit}_k (f)\cap f^{-1}(-\infty,c)$ is finite.
\end{enumerate}
Abbondandolo and Majer then proceed to define a $\mathbb{Z}_2$-graded Morse homology for $f$ satisfying (F1)-(F5), and give various invariance theorems. We recall one.
\begin{theorem}\cite[Theorem 1.8]{abbondandolomajerhomology}
Assume that $f_0,f_1$ satisfy (F1)-(F5). If $\norm{f_1-f_0}_\infty<\infty$ then $H_k(f_0)\cong H_k(f_1)$.
\end{theorem}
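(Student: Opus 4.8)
Since this statement is quoted verbatim from \cite{abbondandolomajerhomology}, I only sketch the argument; it is the standard continuation principle of Morse homology, with care taken for the strongly indefinite setting. The plan is to build mutually inverse chain maps $C_*(f_0)\leftrightarrows C_*(f_1)$ from the non‑autonomous gradient equation interpolating the two flows, and to identify the one place where the hypothesis $\norm{f_1-f_0}_\infty<\infty$ is genuinely used. First I would set up the homotopy: writing $f_i(x)=\tfrac12\ip{Lx}{x}+b_i(x)$ with $\nabla b_i$ compact, the linear interpolation $f_s:=(1-s)f_0+sf_1=\tfrac12\ip{Lx}{x}+b_s(x)$, $b_s:=(1-s)b_0+sb_1$, again has $\nabla b_s$ compact, so $f_s\in\mathcal F(L)$ for every $s$, and $\norm{f_s-f_0}_\infty\le\norm{f_1-f_0}_\infty$ uniformly in $s$. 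For $s\in(0,1)$ the $f_s$ need not be Morse, Morse--Smale, nor satisfy (F5); so one first replaces $b_s$ by a small $C^1$-perturbation, supported in a bounded region, making the parametrized moduli spaces of the interpolating equation transverse. Because the perturbation is small and has bounded support, (F2) and (F5) persist and the uniform sup-bound is unchanged.

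Next I would define the continuation chain map. Fix a monotone cutoff $\beta:\mR\to[0,1]$ with $\beta\equiv0$ near $-\infty$, $\beta\equiv1$ near $+\infty$, and consider $\dot u(t)+\nabla f_{\beta(t)}(u(t))=0$. For critical points $x$ of $f_0$ and $y$ of $f_1$ with equal relative Morse index $m_{E^-}(x)=m_{E^-}(y)$, counting the isolated solutions asymptotic to $x$ at $-\infty$ and to $y$ at $+\infty$ defines $\Phi:C_*(f_0)\to C_*(f_1)$. As in \cite{abbondandolomajerhomology}, the linearization along such a $u$ is a compact perturbation of $\partial_t+L$ with asymptotic operators the Hessians of $f_0$ at $x$ and $f_1$ at $y$; compactness of $\nabla b_s$ makes it Fredholm in the relative-index sense with index $m_{E^-}(x)-m_{E^-}(y)$, so the relevant moduli spaces are generically compact $0$-manifolds. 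The chain-map identity $\partial\Phi=\Phi\partial$ is the usual description of the boundary of the one-dimensional continuation moduli spaces as once-broken configurations (a continuation trajectory with a $\nabla f_1$-trajectory, or a $\nabla f_0$-trajectory with a continuation trajectory), together with a gluing theorem.

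The key compactness input is the energy estimate. For a continuation solution $u$,
$$\int_{\mR}\norm{\dot u}^2\,dt \;=\; f_0(x)-f_1(y)+\int_{\mR}\Big(\tfrac{d}{ds}f_s\big(u(t)\big)\Big)\Big|_{s=\beta(t)}\dot\beta(t)\,dt,$$
and since $\tfrac{d}{ds}f_s=f_1-f_0=b_1-b_0$ we get $\big|\tfrac{d}{ds}f_s(u)\big|\le\norm{f_1-f_0}_\infty$, hence the energy is at most $f_0(x)-f_1(y)+\norm{f_1-f_0}_\infty$, uniformly over the moduli space. This is precisely where finiteness of $\norm{f_1-f_0}_\infty$ is used. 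Combined with (F2) it gives convergence of sequences of continuation solutions to broken trajectories in the appropriate topology, exactly as in the autonomous case.

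Finally, running the same construction with the reversed homotopy yields $\Psi:C_*(f_1)\to C_*(f_0)$, and a "homotopy of homotopies" argument — interpolating between the concatenation of the two cutoffs and a constant homotopy and counting the resulting one-parameter family of solutions — produces chain homotopies $\Psi\Phi\simeq\mathrm{id}$ and $\Phi\Psi\simeq\mathrm{id}$; the same energy estimate, with constant depending on $\norm{f_1-f_0}_\infty$ and the $L^1$-sizes of the cutoff derivatives, supplies the compactness here. This gives $H_k(f_0)\cong H_k(f_1)$ for all $k$. The main obstacle is not the algebra of continuation maps, which is dimension-insensitive once the analytic package is in place, but the transversality theory behind the constructions: in a non-locally-compact Hilbert space one cannot apply Sard--Smale naively and must use the perturbation scheme of \cite{abbondandolomajerhomology}, while keeping the perturbed interpolating family inside $\mathcal F(L)$, within bounded $C^0$-distance of $f_0$, and compatible with (F2) and (F5).
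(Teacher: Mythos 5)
The paper does not prove this statement; it is cited verbatim from Abbondandolo--Majer \cite{abbondandolomajerhomology}, so there is no internal proof to compare against. Your sketch is the standard non-autonomous continuation argument (interpolating flow, continuation chain map, energy estimate from the uniform $C^0$-bound, homotopy of homotopies), which is indeed the approach taken in the cited reference, and your energy identity and the role of $\norm{f_1-f_0}_\infty$ are correctly identified.
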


We will localize this to isolating neighborhoods $U$ of the gradient flow of $f$. To work with flows instead of local flows we assume that $\nabla f$ is globally Lipschitz, see Remark~\ref{rem:lipschitz}.

\subsection{Local Morse homology}
\label{localMH}

Morse homology is defined intrinsically by the gradient flow. The complex is the $\mZ_2$ vector space generated by the equilibria of the flow which are the critical points of the Morse function. These complex is graded by the relative Morse index. Define the moduli spaces of parametrized curves
$$
\mathcal{M}(x,y):=\{u:\mR\rightarrow \mH\,|\, \dot u(t)=-\nabla f(u(t)), \lim_{t\rightarrow-\infty}u(t)=x, \lim_{t\rightarrow \infty}u(t)=y\}\\
$$ 
There is a free $\mR$-action on these spaces by time reparametrization, i.e.  the action $s\cdot u(\cdot)\mapsto u(\cdot+s)$. Then for two critical points $x,y$ with $m_{E^-} (x)=m_{E^-} (y)+1$, the moduli space of unparametrized connecting orbits $\widehat{\mathcal{M}}(x,y):=\mathcal{M}(x,y)/\mR$ is a finite set of points. The matrix coefficient of the boundary operator is defined to be this count modulo two, i.e.
$$
\partial x=\sum\left(\# \widehat{\mathcal{M}}(x,y)\right)y,
$$
where the sum runs over all critical points $y$ with $m_{E^-}(y)=m_{E^-}(x)-1$. The differential  $\partial$ counts the number of connecting orbits between critical points of index difference one. The fundamental relation $\partial^2=0$ is proved in Morse homology using compactness and gluing arguments. For two critical points $x\in \mathrm{crit}_k(f)$, $z\in \mathrm{crit}_{k-2}(f)$, one shows that the moduli spaces $\widehat{\mathcal{M}}(x,z)$ of unparameterized solutions to the gradient flow can be compactified to a one-dimensional manifold with boundary by adjoining broken orbits. The boundary components are exactly the once broken orbits. As the number of boundary components of a one-dimensional manifold is even, which is what $\partial^2$ counts, one concludes that $\partial^2=0$ and the Morse homology is defined.

We localize the construction of Abbondandolo and Majer to isolating neighborhoods, see also~\cite{rotvandervorst,Rot:ww,Rot:2014ku} for discussions in the finite-dimensional situation. Let $U$ be an isolating neighborhood of the gradient flow of $f$. As the gradient flow of $f$ satisfying (F1) is an $\ls$-flow it in particular satisfies Property (C). This implies that the isolated invariant set $S$ is compact and that the distance $d(\inv U, \partial U)>0$. Then we define the local moduli space of parametrized orbits in $U$ by
\begin{align*}
\mathcal{M}(x,y;U)&:=\{u\in \mathcal{M}(x,y)\,|\, u(t)\in U \quad \text{for all} \quad t\in \mR\}.
\end{align*}
There are evaluation maps $\ev:\mathcal{M}(x,y)\rightarrow \mH$ defined by $ev(u)=u(0)$. These are embeddings with image $W^{u}(x)\cap W^s(y)$, cf.~\cite[Corollary 4.5]{abbondandolomajerhomology}. By isolation, the set $\ev(\mathcal{M}(x,y;U))$ is given by the union of the components of $\ev(\mathcal{M}(x,y))$ that are completely contained in $U$. But as $\ev(\mathcal{M}(x,y;U))\subset S$ this implies that the closure $\mathrm{cl}(\ev(\mathcal{M}(x,y;U)))\subset S\subset \mathrm{int}(U)$. Thus the broken orbits used to define the compactification of the moduli space of unparameterized orbits have image completely in $\mathrm{int}(U)$. Moreover if there exists a pair of unparameterized orbits $([u],[v])\in \mathcal{M}(x,y;U)/\mR\times \mathcal{M} (y,z;U)/\mR$ with $m_{E^-}(x)=m_{E^-} (y)+1=m_{E^-}(z)+2$, then the proof of~\cite[Proposition 6.2]{abbondandolomajerhomology} shows that there is an essentially unique family of unparameterized orbits in $\mathcal{M}(x,y)/\mR$ converging to the broken orbit. But the image of this family must eventually lie in $S$ and hence in the interior of $U$ by the compactness Property-\propC as $d(S,\partial U)>0$. We have argued that the differential $\partial(f,U)$, obtained by counting orbits of index difference one \emph{completely} contained in $U$ squares to zero. We define the local Morse homology as $H_k(f,U):=\ker \partial_k(f,U)/\mathrm{im}\, \partial_{k+1}(f,U)$. In the next section we investigate the invariance property of this local Morse homology. It is clear that we need less than (F1)-(F5) to define the local Morse homology as we only need information of the flow on isolating neighborhoods. To be precise to define the local Morse homology for $(f,U)$ the following two assumptions are enough.
\begin{enumerate}
\item[(B1)] $U$ is an isolating neighborhood of the gradient flow of a $f\in C^2(\mH,\mR)$, which has the form 
$$
f(x)=\frac{1}{2}\langle Lx,x\rangle+b(x),\qquad \text{for all}\qquad x\in U,
$$
where $\nabla b(x)$ is a compact operator. 
\item[(B2)] $f$ is a Morse function on $U$ and Morse-Smale up to order $2$ on $U$. That is, all critical points in $U$ are non-degenerate and for every $p\in ev(\mathcal{M}(x,y;U))$ with $m_{E^-}(x)+m_{E^-}(y)\leq 2$ we have that $T_pW^u(x)+T_p(W^s(y)=\mH$. 
\end{enumerate}

\begin{theorem}
\label{thm:localmorse}
The local Morse homology of a pair $(f,U)$ satisfying (B1) and (B2) above is well defined. 
\end{theorem}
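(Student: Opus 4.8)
The plan is to check that the local hypotheses (B1)--(B2) are exactly what is needed to carry out the construction of Abbondandolo--Majer~\cite{abbondandolomajerhomology} \emph{localized} to the isolating neighborhood $U$: they play the role of the global conditions (F1), (F3), (F4), while the missing Palais--Smale and finiteness conditions (F2), (F5) are supplied automatically by Property-\propC on $U$. First I would record the consequences of (B1): near $S=\inv U$ the gradient flow of $f$ is an $\ls$-flow, hence satisfies Property-\propC by Lemma~\ref{lemma:C}; therefore $S$ is compact and $d(S,\partial U)>0$. Any critical point of $f$ contained in $U$ is an equilibrium whose full orbit lies in $U$, hence lies in $S\ss\interior U$; since the critical points are isolated by (B2), only finitely many of them lie in $U$. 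The relative Morse index $m_{E^-}$ is finite and well defined thanks to the splitting $\mH=E^+\oplus E^-$ induced by $L$ and the compactness of $\nabla b$, exactly as in~\cite{abbondandolomajerhomology}, so the $\mZ_2$-graded complex generated by the critical points in $U$ makes sense.

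Next I would show that $\partial(f,U)$ is well defined. For critical points $x,y$ in $U$ with $m_{E^-}(x)=m_{E^-}(y)+1$, the Morse--Smale condition up to order $2$ in (B2) makes $\mathcal{M}(x,y;U)$ a one-dimensional manifold on which the $\mR$-action is free, so $\widehat{\mathcal{M}}(x,y;U)$ is a zero-dimensional manifold. As explained in the paragraph preceding the theorem, $\mathrm{cl}(\ev(\mathcal{M}(x,y;U)))\ss S\ss\interior U$, and together with Property-\propC this forces $\widehat{\mathcal{M}}(x,y;U)$ to be compact, hence a finite set of points. Thus the mod-two count $\#\widehat{\mathcal{M}}(x,y;U)$ is finite and $\partial(f,U)$ is defined.

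For $\partial(f,U)^2=0$ I would compactify the moduli spaces of index difference two. Given $x\in\mathrm{crit}_k(f)\cap U$ and $z\in\mathrm{crit}_{k-2}(f)\cap U$, the local Fredholm theory and the Morse--Smale condition up to order $2$ make $\widehat{\mathcal{M}}(x,z;U)$ a one-dimensional manifold each of whose ends converges to a once-broken orbit through an intermediate critical point $y$; conversely the gluing theorem~\cite[Proposition 6.2]{abbondandolomajerhomology} produces, near each once-broken orbit, an essentially unique family of unbroken orbits. Since every broken orbit in play has image in $S\ss\interior U$, this family eventually lies in $\interior U$ and hence contributes to $\mathcal{M}(x,z;U)$. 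Therefore $\widehat{\mathcal{M}}(x,z;U)$ compactifies to a compact one-manifold whose boundary points are in bijection with the once-broken orbits counted by the matrix coefficients of $\partial(f,U)^2$; a compact one-manifold has an even number of boundary points, so $\partial(f,U)^2=0$ and $H_k(f,U):=\ker\partial_k(f,U)/\mathrm{im}\,\partial_{k+1}(f,U)$ is well defined.

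The main obstacle is making sure that the \emph{local} transversality in (B2) genuinely suffices for both the broken-trajectory compactification and the gluing step, i.e.\ that nothing escaping through $\partial U$ can corrupt the counts. This is precisely where $d(S,\partial U)>0$ together with Property-\propC enters: it guarantees that every limiting broken orbit, and every orbit obtained by gluing, stays in a fixed compact subset of $\interior U$, so the compactness and gluing arguments of~\cite{abbondandolomajerhomology} apply verbatim and no global information about $f$ is required.
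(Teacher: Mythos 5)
Your proposal matches the paper's proof in substance and structure: you observe that (B1)--(B2) localize (F1), (F3), (F4), that the Palais--Smale condition on $U$ follows from Property-\propC via what the paper isolates as Lemma~\ref{lem:g2}, and that the finiteness of critical points in $U$ follows from compactness of $S$ together with the Morse condition -- exactly the ``redundancy'' argument the paper uses to deduce (G2) and (G5) from (G1), (G3), (G4). The compactification and gluing discussion you spell out is the same as the paper's paragraph preceding the theorem, so this is the same proof, just with more of the surrounding material folded in.
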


Before we prove this theorem, we give a Lemma that shows that Property-\propC is closely related to the Palais-Smale condition. 
\begin{lemma}
\label{lem:g2}
Let $f \in C^{1,1}(\mH,\mR)$ and suppose that the gradient flow $\phi$ satisfies Property-\propC. Let $\{x_n\}_{n\in\mN}\subset \mH$ be a bounded sequence such that $F(x_n) := \nabla f(x_n) \rightarrow 0$. Then $\{x_n\}_{n\in\mN}$ has a convergent subsequence. 
\end{lemma}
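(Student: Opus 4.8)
The plan is to reduce Lemma~\ref{lem:g2} to an application of Property-\propC by showing that a bounded Palais--Smale sequence for $f$ gives rise to a sequence whose forward/backward orbit segments under the gradient flow $\phi$ stay bounded. The key mechanism is that when $\nabla f(x_n)$ is small, the point $x_n$ moves very little along the gradient flow over a bounded time interval; by letting the time window grow slowly while the gradient shrinks, we keep the orbit segments $x_n\cdot[-n,n]$ inside a fixed bounded set.

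Concretely, suppose $\{x_n\}$ lies in a ball $B_R(0)$ and $\norm{\nabla f(x_n)}\to 0$. First I would record the elementary a priori estimate: since $f$ is $C^{1,1}$ with $F=\nabla f$ globally Lipschitz with some constant $c$ (this is where global Lipschitzness of $\nabla f$, assumed in Section~\ref{sec:localMorse}, enters), an orbit $u(t)=x_n\cdot t$ of $\dot u = -\nabla f(u)$ satisfies a Gr\"onwall-type bound. Writing $u(t) = x_n - \int_0^t \nabla f(u(s))\,ds$ and using $\norm{\nabla f(u(s))} \le \norm{\nabla f(x_n)} + c\,\norm{u(s)-x_n}$, Gr\"onwall's inequality gives
\[
\norm{x_n\cdot t - x_n} \le \frac{\norm{\nabla f(x_n)}}{c}\left(e^{c\abs{t}}-1\right)\qquad\text{for all }t.
\]
(The same bound holds for negative $t$ by the analogous argument.) Now I would pass to a subsequence along which $\norm{\nabla f(x_n)}$ decreases fast enough that $\frac{\norm{\nabla f(x_n)}}{c}\left(e^{cn}-1\right)\le 1$; this is possible because for each fixed $n$ the quantity $e^{cn}-1$ is a constant and $\norm{\nabla f(x_n)}\to 0$, so one can extract indices $n_k$ with $\norm{\nabla f(x_{n_k})}\le \frac{c}{e^{c k}-1}$ (equivalently, reindex so that the tail is controlled). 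Along this subsequence, $x_{n}\cdot[-n,n] \subset B_{R+1}(0)$, so the set $\bigcup_n x_n\cdot[-n,n]$ is bounded.

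With the orbit segments trapped in a bounded set, Property-\propC applied to the gradient flow $\phi$ immediately yields a convergent subsequence of $\{x_n\}$, which is exactly the conclusion; and a convergent subsequence of a subsequence of the original sequence is a convergent subsequence of the original sequence, so we are done. The main obstacle — really the only subtle point — is the bookkeeping in the diagonal/extraction step: Property-\propC is phrased for a sequence $\{x_n\}$ with $\bigcup_n x_n\cdot[-n,n]$ bounded, with the $n$-th term's time window being exactly $[-n,n]$, so one must first pass to a subsequence of the Palais--Smale sequence and then relabel it so that the $k$-th surviving term has a small enough gradient to control the window $[-k,k]$. This is routine but needs to be stated carefully so that the hypothesis of Property-\propC is literally met. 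Everything else (the Gr\"onwall estimate, the final invocation) is immediate.
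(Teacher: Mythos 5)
Your proof is correct and follows essentially the same route as the paper: a Gr\"onwall estimate using the global Lipschitz constant of $\nabla f$ shows that $\norm{x_n\cdot t - x_n}$ is controlled by $\norm{\nabla f(x_n)}$ times $e^{c\abs{t}}$, and passing to a subsequence along which the gradient decays fast enough (the paper uses $\norm{\nabla f(x_n)}\le e^{-cn}/n$) traps $\bigcup_n x_n\cdot[-n,n]$ in $B_{R+1}(0)$ so that Property-\propC applies. The relabeling bookkeeping you flag is exactly what the paper's phrase ``by passing along a subsequence, we may assume'' accomplishes.
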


\begin{proof}
Let $c$ be a Lipschitz constant of $F$ and let $R>0$ such that $\norm{x_n} < R$ for every $n$. By passing along a subsequence, we may assume that
$$\norm{F(x_n)} \leq \frac{e^{-cn}}{n},\qquad \text{for all}\quad n\in\mN.$$
Fix $n\in \mN$ and define $\gamma_n:[0,n]\rightarrow \mR$ by $\gamma_n(t)=\norm{\phi(x_n,t)-\phi(x_n,0)}$. Then
\begin{align*}
\gamma_n(t) &\leq \int_0^t \norm{F(\phi(x_n,s))} \, ds \\
&\leq\int_0^t \norm{F(\phi(x_n,s)) - F(\phi(x_n,0))} + \norm{F(\phi(x_n,0))} \, ds\\ &\leq c  \int_0^t \gamma_n(s)\, ds + e^{-cn}.
\end{align*}
By Gronwall's inequality we have
$$\gamma_n(t) \leq e^{-cn}e^{ct} \leq 1$$
for $t \in [0,n]$ and therefore $\phi(x_n,[0,n])$ lies inside the ball of radius $R+1$ for every $n$. Analogously, we show that $\phi(x_n,[-n,0])$ is bounded. By Property-\propC the sequence $x_n$ converges up to a subsequence.
\end{proof}

\begin{proof}[Proof of Theorem~\ref{thm:localmorse}]
Let $U$ be an isolating neighborhood of the gradient flow of $f\in C^2(\mH,\mR)$. The assumptions (G1)-(G5) below suffice for defining the local Morse homology without any change in the proofs in \cite{abbondandolomajerhomology}.
\begin{enumerate}
\item[(G1)] The function $f$ has the form
$$
f(x)=\frac{1}{2}\langle Lx,x\rangle+b(x),
$$
with $\nabla b(x)$ compact, for all $x\in U$.
\item[(G2)] $f$ satisfies (PS) on $U$. That is, every sequence $\{x_n\}_{n\in\mN}\subset U$, with $f(x_n)\rightarrow c$, and $\nabla f(x_n)\rightarrow 0$, has a convergent subsequence $x_n\rightarrow x\in U$. 
\item[(G3)] $f$ is a Morse function on $U$.
\item[(G4)] $f$ satisfies Morse-Smale on $U$ up to order 2 in $U$. That is, for every $p\in \ev(\mathcal{M}(x,y;U))$ with $m_{E^-}(x)+m_{E^-}(y)\leq 2$ we have that $T_pW^u(x)+T_pW^s(y)=\mH$.
\item[(G5)] For every $c$ and every $k\in \mZ$ the set $\mathrm{crit}_k (f)\cap(-\infty,c)\cap U$ is finite.
\end{enumerate}
There is a redundancy in these axioms. Lemma~\ref{lem:g2} states that (G2) is always satisfied by functions satisfying (G1). Property (G5) is always satisfied by functions satisfying (G1) and (G3). To see this, suppose (G1) and (G3) hold, and let $\{x_n\}_{n\in\mN}$ be a sequence enumerating critical points in $U$. This sequence has a convergent subsequence by Lemma~\ref{lem:g2} and this limit must be a critical point in $U$. The function $f$ is Morse, which implies that critical points are isolated. It follows that there exist only a finite number of critical points. The assumptions (B1) and (B2) are equivalent to (G1), (G3) and (G4), but we have show that then (G2) and (G5) also hold,  and hence the local Morse homology is well defined. 
\end{proof}

\subsection{Morse-Smale functions are dense}
Recall that we assume that there exists a sequence $\{E_n\}_{n\in\mN}\subset \mH$ of finite-dimensional spaces with orthogonal projections $P_n:\mH\rightarrow \mH$ such that $\overline{\bigcup_{n\in\mN} E_n}=\mH$. Also recall that a function $b$ for which there exists an $n$ such that $b(x)=b(P_n(x))$ for all $x$ is said to have finite-dimensional support. A function with finite-dimensional support has compact gradient. We have the following density result.
\begin{proposition}
\label{prop:dense1}
Let $f=\frac{1}{2}\langle Lx,x\rangle+b(x)$ and $U$ an isolating neighborhood of the gradient flow. Then for every $\epsilon>0$ there exists an $f'=\frac{1}{2}\langle Lx,x\rangle+b'(x)$ with $\sup_{x\in U} |f(x)-f'(x)|<\epsilon$ such that $f'$ satisfies (B1) and (B2) on $U$ and $b'$ has finite-dimensional support. The gradient flows of $f$ and $f'$ are related by continuation through gradient flows of functions satisfying (B1). 
\end{proposition}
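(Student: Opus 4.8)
The plan is to build $f'$ in two stages. First I would continue the gradient flow of $f$, through gradient flows of the form (B1), to the gradient flow of
\[
f_n(x):=\tfrac12\ip{Lx}{x}+b(P_nx),
\]
whose nonlinear part has finite-dimensional support. Fixing $R$ with $U\subset B_R(0)$, the identity
\[
f(x)-f_n(x)=\int_0^1\bigl\langle (I-P_n)\nabla b(P_nx+t(I-P_n)x),\,(I-P_n)x\bigr\rangle\,dt,
\]
together with the compactness of $\nabla b$ (so $\nabla b(\overline{B_R(0)})$ is precompact) and the uniform convergence $P_n\to I$ on precompact sets, shows that $\sup_{x\in U}\abs{f(x)-f_n(x)}\to 0$; hence for $n$ large this supremum is $<\epsilon/2$. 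For the continuation I would rerun the proof of Proposition~\ref{prop:KP_n} with $\nabla b$ in the role of $K$, using the homotopy
\[
f^\lambda(x):=\tfrac12\ip{Lx}{x}+b(A_\lambda x),\qquad \nabla f^\lambda(x)=Lx+A_\lambda\nabla b(A_\lambda x),
\]
where $A_\lambda$ is the family of selfadjoint operators assembled from the projections $P_m$ in that proof, normalised so that $A_0=I$ and $A_{1/n}=P_n$. Selfadjointness of $A_\lambda$ makes $A_\lambda\nabla b(A_\lambda\,\cdot\,)=\nabla(b\circ A_\lambda)$ a compact gradient, so every $f^\lambda$ has the form required by (B1); continuity of the family $\lambda\mapsto\nabla f^\lambda$ of local flows --- the only delicate point being at $\lambda=0$ --- is checked exactly as in Proposition~\ref{prop:KP_n}, using that $\nabla b$ and hence $D^2b(x)$ are compact. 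Compactness of the isolated invariant set then produces $s>0$ with $U$ isolating for $\nabla f^\lambda$ for all $\lambda\in[0,s)$, and choosing $n$ large enough that $1/n<s$ (which only strengthens the supremum estimate) gives the continuation from $f=f^0$ to $f_n=f^{1/n}$.

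Next I would repair (B2) by a small finite-dimensional perturbation. Writing $x=(u,v)\in E_n\oplus E_n^\perp$ and $\beta:=b|_{E_n}$, the function $f_n$ splits as $f_n(x)=g(u)+\tfrac12\ip{Lv}{v}$ with $g(u):=\tfrac12\ip{Lu}{u}+\beta(u)$ on $E_n$, so the gradient flow of $f_n$ is the product of the $g$-gradient flow on $E_n$ with the hyperbolic linear flow $\dot v=-Lv$. Since $U$ is bounded, $\invariant(U,\nabla f_n)\subset E_n\times\{0\}$ and $\tilde U_n:=\{u\in E_n\,:\,(u,0)\in U\}$ is an isolating neighborhood in $E_n$ for the $g$-gradient flow. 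By a standard finite-dimensional transversality argument I would choose $g'=\tfrac12\ip{Lu}{u}+\beta'$ on $E_n$, with $\nabla\beta'$ globally Lipschitz and $\norm{g-g'}_{C^2}$ as small as desired on $\overline{B_R(0)}\cap E_n$, that is Morse and Morse--Smale on $\tilde U_n$. Then $f'(x):=\tfrac12\ip{Lx}{x}+\beta'(P_nx)$ has $b'(x)=\beta'(P_nx)$ of finite-dimensional support, has the form (B1), and satisfies $\sup_U\abs{f-f'}<\epsilon$ once $\norm{g-g'}_{C^2}<\epsilon/2$. Because the critical points of $\nabla f'$, and the connecting orbits between them that stay in $U$, all lie in $E_n\times\{0\}$, and because the complementary linear flow is hyperbolic --- its stable and unstable subspaces $E^+\cap E_n^\perp$ and $E^-\cap E_n^\perp$ span $E_n^\perp$ and meet only in $0$ --- the Morse and Morse--Smale properties of $g'$ on $\tilde U_n$ yield (B2) for $f'$ on $U$; here the order-$2$ Morse--Smale condition for $f'$ corresponds, via the index identity $m_{E^-}(u^\ast,0)=\ind(\operatorname{Hess}g'(u^\ast))-\dim(E^-\cap E_n)$, to a fixed finite order of transversality for $g'$. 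Finally, the straight-line homotopy $f^s:=(1-s)f_n+sf'=\tfrac12\ip{Lx}{x}+\bigl((1-s)\beta+s\beta'\bigr)(P_nx)$ runs through functions of the form (B1) with finite-dimensional support whose gradients are uniformly $C^0$-close to $\nabla f_n$ on $U$, so Proposition~\ref{prop:close_fields} supplies a continuation of the gradient flows with $U$ isolating throughout. Concatenating the two continuations proves the statement.

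The hard part is the first stage: replacing $b(x)$ by $b(P_nx)$ is \emph{not} a $C^0$-small change of the vector field on $U$ --- the example $K(x)=\norm{x}v$ just before Proposition~\ref{prop:maurin} already shows this --- so Proposition~\ref{prop:close_fields} cannot be used there, and one must instead re-use the finer mechanism of Proposition~\ref{prop:KP_n}, namely a continuous family of local flows together with compactness of the isolated invariant set, while verifying that the homotopy may be chosen to consist of gradient flows of functions satisfying (B1). By contrast, the second stage is a routine finite-dimensional perturbation combined with the bookkeeping for the hyperbolic complementary directions, and the supremum estimate in the first stage is elementary.
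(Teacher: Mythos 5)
Your proof is correct and follows essentially the same two-stage strategy as the paper: first pass from $f$ to $f_n=\tfrac12\ip{Lx}{x}+b(P_nx)$ via a Proposition~\ref{prop:KP_n}-type continuation and the compactness estimate showing $|b-b\circ P_n|\to 0$ uniformly on $U$, then do a finite-dimensional Morse--Smale perturbation in $E_n$ and continue by the $C^0$-small straight-line homotopy using Proposition~\ref{prop:close_fields}. The paper handles the first stage by citing Proposition~\ref{prop:KP_n} outright together with the remark that ``its proof shows the homotopy is through gradient flows satisfying (B1)''; your version is slightly more careful here, since the homotopy written in Proposition~\ref{prop:KP_n}, namely $L+K(A_\lambda\,\cdot\,)$, is not literally the gradient of $\tfrac12\ip{Lx}{x}+b(A_\lambda x)$ --- one must use $L+A_\lambda\nabla b(A_\lambda\,\cdot\,)=\nabla\bigl(\tfrac12\ip{Lx}{x}+b(A_\lambda x)\bigr)$, which is what you do --- and the verification of continuity of the induced family of local flows goes through unchanged because $A_\lambda$ is selfadjoint and $D\nabla b=D^2b$ is compact. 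Your second stage likewise matches the paper's (which is terse, citing only the density of Morse--Smale functions in finite dimensions and transitivity of density), but makes explicit the product decomposition $f_n=g\oplus\tfrac12\ip{L\,\cdot\,}{\cdot}|_{E_n^\perp}$, the identification of the isolating neighborhood $\tilde U_n$ in $E_n$, the index shift by $\dim(E^-\cap E_n)$, and the equivalence of order-$2$ transversality for $f'$ with transversality for $g'$ once $E_n^\perp=(E^+\cap E_n^\perp)\oplus(E^-\cap E_n^\perp)$ splits the complementary flow hyperbolically.
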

\begin{proof}
The gradient of $f_n(x):=\frac12\langle Lx,x\rangle +b(P_nx)$ equals
$$
\nabla f_n=Lx+P_n\nabla b P_n.
$$
The gradient flow of $f_n$ is related by continuation to the gradient flow of $f$ for $n$ sufficiently large by Proposition~\ref{prop:KP_n}. Note that the proof of Proposition~\ref{prop:KP_n} actually shows that the homotopy is through gradient flows of functions satisfying (B1). Now, as $U$ is bounded, it is contained in a ball $B_R(0)$. By compactness of $\nabla b$ there exists for every $\epsilon>0$ an $n$ such that $\nabla b(B_R(0))\subset P_n(\mH)+B_\epsilon(0)$. From this we estimate for $x\in B_R(0)$ that
\begin{align*}
|b(x)-b(P_n(x))|&=|\int_0^1\langle \nabla b(P_n x+s P_n^\perp x),P_n^\perp(x)\rangle ds|\\
&\leq\int_0^1|\langle P_n^\perp \nabla b(P_nx+s P_n^\perp x),x\rangle|ds\leq \epsilon \norm x\leq \epsilon R.
\end{align*}
Hence we get that $|b-b\circ P_n|_\infty\rightarrow 0$ as $n\rightarrow \infty$ on $U$. We have shown that the space of functions $f'=\frac{1}{2}\langle Lx,x\rangle+b'(x)$ satisfying (B1) on $U$, where $b'$ has finite-dimensional support, is dense in the space of all functions satisfying (B1). Such a function satisfies (B2) if and only if the restriction to $P_n(\mH)$ is Morse-Smale on $U\cap P_n(\mH)$. But in the finite-dimensional case it is a classic fact, cf.~\cite[Remark 6.7]{banyaga}, that functions whose gradient flow is Morse-Smale functions are dense. Density is transitive, which proves the proposition.
\end{proof}

\subsection{Invariance of Local Morse homology}

\begin{proposition}
\label{prop:continuation1}
Let $b_\lambda$ be a smooth family of functions with compact gradient, and $U$ an isolating neighborhood of each gradient flow of $f_\lambda(x):=\frac{1}{2}\langle Lx,x\rangle+b_{\lambda}(x)$. Assume that $f_0,f_1$ satisfy additionally (L2). Assume that $\sup_{x\in U}\norm{\frac{\partial f_\lambda}{\partial \lambda}(x)}<C$. Then $H_*(f_0,U)\cong H_*(f_1,U)$.
\end{proposition}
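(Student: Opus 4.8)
The plan is to mimic the standard continuation argument in Morse homology, as in~\cite[Section 7]{abbondandolomajerhomology}, but carried out \emph{locally} inside the isolating neighborhood $U$. By the invariance result Theorem~\ref{thm:localmorse} and the density Proposition~\ref{prop:dense1}, it suffices to treat the case where the endpoints $f_0,f_1$ (and, after a small perturbation of the homotopy supported near $\lambda=0,1$, the whole family) consist of functions satisfying (B1) and (B2), with $b_\lambda$ of finite-dimensional support; this reduces most transversality issues to the finite-dimensional setting. First I would introduce the non-autonomous gradient equation $\dot u(t) = -\nabla f_{\lambda(t)}(u(t))$ for a smooth monotone cutoff $\lambda\colon\mR\to[0,1]$ with $\lambda\equiv 0$ near $-\infty$ and $\lambda\equiv 1$ near $+\infty$, and define, for critical points $x$ of $f_0$ and $y$ of $f_1$ with $m_{E^-}(x)=m_{E^-}(y)$, the local moduli space $\mathcal{M}^\lambda(x,y;U)$ of solutions $u$ with $u(\mR)\subset U$ and the usual asymptotics. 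The continuation chain map $\Phi\colon C_*(f_0,U)\to C_*(f_1,U)$ is then $\Phi x = \sum (\#\mathcal{M}^\lambda(x,y;U))\, y$.

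The key steps are: (1) \emph{Compactness.} Show that the bound $\sup_{x\in U}\norm{\partial f_\lambda/\partial\lambda}<C$ gives a uniform $C^0$-bound on the action $f_{\lambda(t)}(u(t))$ along trajectories of the non-autonomous equation, so that together with the (PS) condition on $U$ (automatic by Lemma~\ref{lem:g2}, since each $f_\lambda$ satisfies (B1)) and the isolation $d(\inv U,\partial U)>0$ provided by Property-\propC, every sequence in $\mathcal{M}^\lambda(x,y;U)$ has a subsequence converging, in the broken-trajectory sense, to a configuration whose pieces all lie in $S\subset\interior U$ — exactly the argument already used in Section~\ref{sec:localMorse} for the local differential. (2) \emph{Transversality and dimension count.} Generic choice of the homotopy makes $\mathcal{M}^\lambda(x,y;U)$ a smooth manifold of dimension $m_{E^-}(x)-m_{E^-}(y)$; for index difference $0$ it is a finite set of points (by the compactness in step (1) no trajectory escapes $U$), and for index difference $1$ a $1$-manifold whose ends are the broken configurations $\mathcal{M}^\lambda(x,w;U)\times\widehat{\mathcal{M}}_{f_1}(w,y;U)$ and $\widehat{\mathcal{M}}_{f_0}(x,z;U)\times\mathcal{M}^\lambda(z,y;U)$, all contained in $\interior U$. (3) \emph{Chain map and homotopy equivalence.} Counting the ends of these $1$-manifolds yields $\partial(f_1,U)\circ\Phi = \Phi\circ\partial(f_0,U)$, so $\Phi$ is a chain map; concatenating the homotopy with its reverse and using a further parametrized moduli space (with the cutoff itself varying in an extra parameter) produces a chain homotopy between $\Phi_{10}\circ\Phi_{01}$ and the identity, hence $H_*(f_0,U)\cong H_*(f_1,U)$.

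The main obstacle is step (1): ensuring that no Floer-continuation trajectory for the non-autonomous equation can leave $U$. Unlike the autonomous case, the action is not monotone along $u$, so one must exploit the hypothesis $\sup_{x\in U}\norm{\partial f_\lambda/\partial\lambda}<C$ to control $\tfrac{d}{dt}f_{\lambda(t)}(u(t)) = -\norm{\nabla f_{\lambda(t)}(u(t))}^2 + \dot\lambda(t)\,\tfrac{\partial f_\lambda}{\partial\lambda}(u(t))$, and then combine the resulting energy/action bounds with an argument by contradiction in the spirit of Lemma~\ref{lemma:rho}(iii) and the convergence discussion in Section~\ref{sec:localMorse}: a sequence of trajectories with points escaping $U$ would, by Property-\propC applied to time-shifted points, subconverge to an entire trajectory in $\inv U\subset\interior U$, contradicting escape. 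Once escape is excluded the remaining analysis is local and finite-dimensional (after the reduction via Proposition~\ref{prop:dense1}), so it reduces to the classical finite-dimensional continuation theorem, cf.~\cite{banyaga}, applied fiberwise on $U\cap E_n$, together with the already-established fact that broken configurations stay in $\interior U$.
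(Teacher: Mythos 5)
Your proposal follows the classical Floer-theoretic route: a non-autonomous equation $\dot u=-\nabla f_{\lambda(t)}(u)$, a count of index-zero continuation trajectories in $U$, and a chain-homotopy argument. The paper does something genuinely different: it builds an \emph{autonomous} gradient flow of a single function $F(x,\mu)=f_{\omega(\mu)}(x)+r(1+\cos(\pi\mu)+\tfrac1\kappa\eta(\mu))$ on $\mH\oplus\mR$, checks that $U'=U\times[-\tfrac13,\tfrac43]$ is an isolating neighborhood with critical points only at $\mu=0,1$ (this is where the bound $\sup_U\norm{\partial f_\lambda/\partial\lambda}<C$ enters, via the choice of $r$), and reads the continuation map off as the off-diagonal block of the upper-triangular differential $\partial(F,U')$; the chain-map property is then a consequence of $\partial(F,U')^2=0$. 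The point of this detour is that it reduces the entire continuation problem to the autonomous local Morse homology already established in Section~\ref{sec:localMorse}, so no new compactness, transversality or gluing analysis is needed.

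As written, your route has two genuine gaps. First, the opening reduction is circular: Theorem~\ref{thm:localmorse} only gives well-definedness of $H_*(f,U)$ for a fixed pair, and Proposition~\ref{prop:dense1} only gives density; the claim that you may replace $f_0,f_1$ (and the family) by nearby functions with finite-dimensional support \emph{without changing the local Morse homology} is precisely an instance of the invariance you are trying to prove. (It can be repaired by a separate small-perturbation argument producing a chain isomorphism via bijections of critical points and transverse trajectories, but that must be supplied.) Second, and more seriously, the compactness step does not transfer from the autonomous case. The argument of Section~\ref{sec:localMorse} hinges on the fact that a full orbit contained in $U$ lies in $S=\inv{U}\subset\interior U$, so broken limits stay in the interior; a full solution of the non-autonomous equation contained in $U$ lies in no invariant set of any single flow, and Property-(C) is a statement about one flow, so ``applying it to time-shifted points'' only works when the shifts diverge and the limiting equation is the autonomous one for $f_0$ or $f_1$. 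For bounded time shifts you would need a non-autonomous analogue of Lemma~\ref{lemma:C} and a separate argument that limits of continuation trajectories in $U$ remain in $\interior U$. The paper's $\mu$-augmented flow is exactly the device that restores an autonomous flow with isolating neighborhood $U'$ so that these isolation arguments apply verbatim; if you want to keep the non-autonomous formulation you must rebuild them by hand.
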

\begin{proof}
Let $\mH':=\mH\oplus \mR$ and equip $\mH'$ with inner product 
$$
\langle (x,\mu),(y,\lambda)\rangle_{\mH'}=\langle x,y\rangle_\mH+\frac{\mu\lambda}{\kappa},
$$  
for some $\kappa>0$ to be specified later. Define $L':\mH'\rightarrow \mH'$ by $L':=L\oplus\mathrm{Id}$. Let $\omega:\mR\rightarrow \mR$ be a smooth cutoff function with $\omega(t)=1$ for $t\leq \frac{1}{3}$ and $\omega(t)=0$ for $t\geq \frac{2}{3}$ which is strictly decreasing on $(\frac{1}{3},\frac{2}{3})$. Let $\eta$ be a smooth cutoff function with 
$$
\begin{array}{cccccc}
\eta(\mu)=0 &\text{for}&\mu\in[0,1],&\eta'(\mu)<0&\text{for} &\mu<0,\\
\eta'(\mu)>0&\text{for}& \mu>0,&|\eta'(\mu)|>1&\text{for}&\mu\in\mR\setminus[-\frac 12,\frac 32]
\end{array}
$$ 

Define $F:\mH'\rightarrow \mR$ by
$$
F(x,\mu)=f_{\omega(\mu)}+r(1+\cos(\pi \mu)+\frac{1}{\kappa}\eta(\mu))
$$
The negative gradient of $F$ equals 
\begin{align}
\label{eq:gradF}
\begin{split}
-\nabla F(x,\mu)=&-\nabla f_{\omega(\mu)}(x)\\
&-\kappa\left(\omega'(\mu)\frac{\partial f_\lambda}{\partial \lambda}\bigr|_{\lambda=\omega(\mu)}-r\pi \sin(\pi \mu)+\frac{\eta'(\mu)}{\kappa}\right)\frac{\partial}{\partial\mu}.
\end{split}
\end{align}
Let $U':=U\times[-\frac{1}{3},\frac{4}{3}]$. If $r>\frac{2\max_{\mu\in[0,1]}|\omega'(\mu)C|}{\sqrt{3}\pi}$ the critical points of $F$ in $U'$ are at $\mu=0$ and $\mu=1$. Note that the vector field in Equation~\eqref{eq:gradF} is well-defined for $\kappa=0$ and that $U'$ is an isolating neighborhood for this vector field. By Proposition~\ref{prop:close_fields} it follows that for $\kappa>0$ small, the set $U'$ is an isolating neighborhood of $F$. We analyze the local Morse homology of $F$. The critical points of $F$ can be identified with the critical points of $f_0$ at $\mu=0$ and with the critical points of $f_1$ at $\mu=1$. The index is shifted by one at $\mu=0$ as there is one extra unstable direction. So we have the identification
\begin{equation}
\label{eq:identification}
C_k(F,U')\cong C_{k-1}(f_0,U)\oplus C_{k}(f_1,U).
\end{equation}
By~\cite[Theorem 1.14]{abbondandolomajerhomology} there exists a boundary operator $\partial(F,U')$ such that $\partial(F,U')$ counts connecting orbits whenever the orbits are transverse. A moments inspection shows that the differential has the form
$$
\partial(F,U')=\left(\begin{array}{cc}\partial(f_0,U)&0\\ \Phi_k^{10}&\partial(f_1,U)\end{array}\right),
$$
with regards to the identification in \eqref{eq:identification}. As the vector field always has a positive component in $\frac{\partial}{\partial \mu}$ direction for $\mu\in(0,1)$ there are no connections from the critical points at $\mu=1$ to the critical points at $\mu=0$. From $\delta(F,U')^2=0$ we see that $\Phi^{10}_k$, associated with counting connecting orbits from the complex at $\mu$ to the complex at $\mu=1$, is a chain map, hence induces a map $\Phi^{10}_k:H_*(f_0,U)\rightarrow H_*(f_1,U)$. Iterating this trick shows that $\Phi^{10}_k$ is functorial on the homology level, independent of the chosen isolating homotopy, and $\Phi^{00}=\mathrm{Id}$, cf.~\cite{weber,rotvandervorst} for the arguments in the finite-dimensional situation. It follows that $H_*(f_0,U)\cong H_*(f_1,U)$.
\end{proof}

\begin{lemma}
\label{lem:bounded}
Suppose $f\in \mathcal{F}(L)$. Then $f$ is bounded on bounded sets.
\end{lemma}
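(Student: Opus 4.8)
The plan is to unwind the definition of $\mathcal{F}(L)$: a function $f\in\mathcal{F}(L)$ has the form $f(x)=\frac12\ip{Lx}{x}+b(x)$ where $\nabla b$ is compact. The quadratic term $\frac12\ip{Lx}{x}$ is obviously bounded on bounded sets, since $L$ is a bounded operator: on a ball $B_R(0)$ we have $\abs{\frac12\ip{Lx}{x}}\leq\frac12\norm{L}R^2$. So the whole issue reduces to showing that $b$ itself is bounded on bounded sets, and this is where the compactness of $\nabla b$ enters.

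First I would fix $R>0$ and estimate $\abs{b(x)-b(0)}$ for $x\in B_R(0)$ by integrating the derivative along the segment from $0$ to $x$:
\[
b(x)-b(0)=\int_0^1\ip{\nabla b(sx)}{x}\,ds.
\]
Naively this only gives $\abs{b(x)-b(0)}\leq R\sup_{y\in B_R(0)}\norm{\nabla b(y)}$, so the real content is that $\nabla b$ is \emph{bounded} on bounded sets, which is exactly what compactness of $\nabla b$ delivers: $\nabla b(B_R(0))$ is precompact, hence bounded, so $M_R:=\sup_{y\in B_R(0)}\norm{\nabla b(y)}<\infty$. Therefore $\abs{b(x)}\leq\abs{b(0)}+R\,M_R$ for all $x\in B_R(0)$, and combining with the bound on the quadratic term gives $\abs{f(x)}\leq\abs{b(0)}+R\,M_R+\frac12\norm{L}R^2$ on $B_R(0)$. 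Since every bounded set is contained in some ball $B_R(0)$, this proves the lemma.

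I do not expect any genuine obstacle here; the only point worth stating carefully is the implication ``$\nabla b$ completely continuous $\Rightarrow$ $\nabla b$ bounded on bounded sets,'' which holds because precompact subsets of a metric space are bounded. One could also phrase the argument with the mean value inequality instead of the fundamental theorem of calculus if one prefers not to assume $b$ is $C^1$ along segments, but since $b\in C^2$ (as $f\in C^2(E)$ in the definition of $\mathcal{F}(L)$) the integral form is available and cleanest.
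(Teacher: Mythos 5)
Your proof is correct and takes essentially the same approach as the paper: both integrate $\nabla b$ along a line segment, invoke precompactness of $\nabla b(B_R(0))$ to obtain a uniform bound on the gradient, and bound the quadratic term via boundedness of $L$. The only cosmetic difference is that you fix one endpoint at the origin, while the paper estimates $|b(x_1)-b(x_0)|$ for arbitrary $x_0,x_1\in B_R(0)$.
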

\begin{proof}
Let $x_0,x_1\in B_R(0)$, then using that $\nabla b$ is a compact map, we obtain
\begin{align*}
|b(x_1)-b(x_0)|&=|\int_0^1\langle \nabla b(x_0+s (x_1-x_0),x_1-x_0\rangle ds|\\
&\leq \norm{ \nabla b(B_R(0))} 2 R\leq C,
\end{align*}
for some constant $C$ which only depends on $r$. As $L$ is a bounded linear operator we get that $f$ is bounded on bounded sets.
\end{proof}

\begin{corollary}\label{cor:morse_invariance}
Let $f_\lambda$ with $\lambda\in [0,1]$ be a family of functions, and $U$ an isolating neighborhood of the each gradient flow of $f_\lambda$. Assume that $f_\lambda$ satisfies (B1) for each $\lambda\in [0,1]$ and that $f_0,f_1$ satisfy additionally (B2). Then $H_*(f_0,U)\cong H_*(f_1,U)$.
\end{corollary}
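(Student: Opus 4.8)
\textbf{Proof plan for Corollary~\ref{cor:morse_invariance}.}

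The plan is to reduce this to Proposition~\ref{prop:continuation1} by handling the discrepancy between the two settings. Proposition~\ref{prop:continuation1} assumes the uniform bound $\sup_{x\in U}\norm{\partial f_\lambda/\partial\lambda(x)}<C$ on the $\lambda$-derivative, whereas Corollary~\ref{cor:morse_invariance} only assumes that each $f_\lambda$ satisfies (B1) — i.e.\ $f_\lambda(x)=\frac12\ip{Lx}{x}+b_\lambda(x)$ on $U$ with $\nabla b_\lambda$ compact — together with (B2) at the endpoints. The first step is therefore to upgrade the family. Using Proposition~\ref{prop:dense1}, I would replace $f_0$ and $f_1$ by nearby functions $f_0'$, $f_1'$ whose bump terms have finite-dimensional support and which still satisfy (B1), (B2); the proposition also gives that $f_i$ and $f_i'$ are related by continuation through gradient flows satisfying (B1), and (after invoking Proposition~\ref{prop:continuation1} on those small homotopies, whose $\lambda$-derivatives are uniformly bounded since they are built from explicit interpolations with finite-dimensional support) this does not change the local Morse homology. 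So without loss of generality $f_0,f_1$ have finite-dimensional support.

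The second step is to interpolate. Having reduced to the case where everything lives (in the nonquadratic part) on a single finite-dimensional subspace $E_n$ — more precisely $b_0,b_1$ both factor through $P_n$ — I would connect $b_0$ and $b_1$ by the straight-line homotopy $b_\lambda(x):=(1-\lambda)b_0(P_nx)+\lambda b_1(P_nx)$, or rather first push the original family $b_\lambda$ of Corollary~\ref{cor:morse_invariance} to a family with finite-dimensional support by the same averaging/projection trick as in Proposition~\ref{prop:dense1}, so that $U$ remains an isolating neighborhood along the way. For such a concatenated family the $\lambda$-derivative $\partial f_\lambda/\partial\lambda = b_1(P_nx)-b_0(P_nx)$ is the difference of two functions that are bounded on the bounded set $U$ by Lemma~\ref{lem:bounded} (or directly by continuity on the compact set $\cl{P_n(U)}$), hence is uniformly bounded on $U$. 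Thus the hypothesis of Proposition~\ref{prop:continuation1} is met, and that proposition yields $H_*(f_0,U)\cong H_*(f_1,U)$ along each piece; concatenating the isomorphisms finishes the argument.

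The main obstacle, and the point that needs care, is maintaining that $U$ is an isolating neighborhood for every gradient flow appearing in the concatenated homotopy. Along the original family this is assumed, and along the small perturbations coming from Proposition~\ref{prop:dense1} it is built in; but when I form the straight-line homotopy between two perturbed endpoints I must check isolation afresh. This is exactly the kind of statement Proposition~\ref{prop:close_fields} is designed for: if the perturbations are taken $C^1$-small enough (which Proposition~\ref{prop:dense1} allows, since it controls $\sup_U|f-f'|$ and, through its proof, the gradients too), the interpolating vector fields stay uniformly close to ones already known to isolate $U$, so $U$ isolates all of them. A second, more bookkeeping-type obstacle is that Proposition~\ref{prop:continuation1} as stated refers to a hypothesis labelled (L2); I would read this as the endpoint regularity condition (B2) (equivalently (G3)--(G4)), which holds at the endpoints of each sub-homotopy by construction, and otherwise invoke Theorem~\ref{thm:localmorse} to know the local Morse homologies in question are all well defined. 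Once isolation is secured, the rest is a routine concatenation of continuation isomorphisms.
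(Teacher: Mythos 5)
Your high-level plan (approximate endpoints via Proposition~\ref{prop:dense1}, then invoke Proposition~\ref{prop:continuation1}, using Proposition~\ref{prop:close_fields} for isolation and Lemma~\ref{lem:bounded} for the $\lambda$-derivative bound) uses the same ingredients as the paper, and your guess that the undefined label (L2) in Proposition~\ref{prop:continuation1} should be read as (B2) is the natural one. However, the middle step has a genuine gap. You propose to interpolate between the perturbed endpoints $f_0'$ and $f_1'$ by the straight-line homotopy $(1-\lambda)f_0'+\lambda f_1'$. Nothing forces $f_0'$ and $f_1'$ to be close to each other, so at interior $\lambda$ this straight line need not be close to \emph{any} vector field that is a priori known to isolate $U$; Proposition~\ref{prop:close_fields} therefore does not apply, and isolation can genuinely fail (e.g.\ the straight line can create an equilibrium on $\partial U$). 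Your fallback, pushing the whole family $b_\lambda$ to $E_n$, fixes isolation but breaks the other half: Proposition~\ref{prop:continuation1} needs a uniform bound on $\partial f_\lambda/\partial\lambda$, and the hypotheses of the corollary only give (B1) for each fixed $\lambda$, with no control on $\partial b_\lambda/\partial\lambda$ (which, unlike a difference $f'_1-f'_0$ of two functions in $\mathcal{F}(L)$, is not covered by Lemma~\ref{lem:bounded}).

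The paper's missing idea is a \emph{piecewise linear approximation of the whole family}: choose $N$ large, sample at $\lambda=i/N$, replace each $f_{i/N}$ by a nearby $f'_{i/N}$ satisfying (B1)--(B2) (with $f'_0=f_0$, $f'_1=f_1$), and set $f'_\lambda=(i+1-N\lambda)f'_{i/N}+(N\lambda-i)f'_{(i+1)/N}$ on $[\tfrac{i}{N},\tfrac{i+1}{N}]$. By continuity of the original family and compactness of $[0,1]$, for $N$ large and perturbations small this stays uniformly close to $f_\lambda$, so $U$ isolates every $f'_\lambda$ by Proposition~\ref{prop:close_fields}. Meanwhile $\partial f'_\lambda/\partial\lambda=N(f'_{(i+1)/N}-f'_{i/N})$ on each piece, a difference of two functions in $\mathcal{F}(L)$, hence bounded on $U$ by Lemma~\ref{lem:bounded}. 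Proposition~\ref{prop:continuation1} then applies on each sub-interval and the isomorphisms concatenate. This simultaneous resolution of isolation and boundedness is exactly what your straight-line (or single-projection) interpolation does not give you.
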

\begin{proof}
For every $\epsilon>0$, there exists a smooth family $f_\lambda'$ and $N\in \mN$ with $f_0'=f_0$, $f_1'=f_1$ and
$$
f_\lambda'=(i+1-N\lambda )f'_{\frac{i}{N}}+(N\lambda-i)f'_{\frac{i+1}{N}}\qquad \text{if}\qquad \lambda\in[\frac{i}{N},\frac{i+1}{N}]
$$
Moreover we can assume $f_{\frac{i}{N}}'$ satisfies (B1) and (B2) and that the family $s\mapsto (1-s)f_\lambda+s f'_\lambda$ induces an continuation of negative gradient flows for each $\lambda$. We use the fact here that $[0,1]$ is compact, isolation is open and that the Morse-Smale functions are dense in $U$, cf.~Propositions~\ref{prop:close_fields} and~\ref{prop:dense1}. This family is a piecewise linear approximation of the continuation $f_\lambda$. Lemma~\ref{lem:bounded} states that $\frac{\partial}{\partial \lambda} f_\lambda'$ is bounded for $\lambda\in[\frac{i}{N},\frac{i+1}{N}]$ for all $i=0,\ldots N-1$. Then Proposition~\ref{prop:continuation1} states that $H_*(f_{i/N},U)\cong H_*(f_{\frac{i+1}{N}},U)$, and hence $H_*(f_0,U)\cong H_*(f_1,U)$.
\end{proof}

\subsection{Local Morse cohomology and $E$-cohomological Conley index}

We can also consider Morse cohomology, which is defined through the usual dualization process at the chain level. As the coefficients are in the field $\mathbb{Z}_2$, the homology and cohomology groups are isomorphic by the universal coefficients theorem.  We denote local Morse cohomology by $H^*(f,U)$. 

\begin{theorem}
\label{theorem:isomorphism}
Let $U$ be an isolating neighborhood of the gradient flow $\phi$ of $f\in C^2(\mH,\mR)$. Assume that $f$  satisfies (B1) and (B2). Then,
$$H^*(f,U) \cong \mathrm{ch}_E^*(U,\phi).$$
\end{theorem}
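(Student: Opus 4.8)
The plan is to establish the isomorphism $H^*(f,U)\cong\ch^*(U,\phi)$ by a chain of reductions, each licensed by an invariance result already in the excerpt, ending with a case where both sides can be computed directly. First I would use Corollary~\ref{cor:morse_invariance} together with Proposition~\ref{prop:dense1}: the pair $(f,U)$ satisfying (B1) and (B2) can be connected, through gradient flows of functions satisfying (B1), to a pair $(f',U)$ where $b'$ has finite-dimensional support and $(f',U)$ still satisfies (B1) and (B2). On the Morse side, Corollary~\ref{cor:morse_invariance} gives $H^*(f,U)\cong H^*(f',U)$; on the Conley-index side, the continuation through gradient $\ls$-flows plus Theorem~\ref{thm:continuation} gives $\ch^*(U,\phi)\cong\ch^*(U,\phi')$, where $\phi'$ is the gradient flow of $f'$. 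So it suffices to prove the theorem under the extra hypothesis that $b$ has finite-dimensional support, say $b(x)=b(P_n x)$.

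Next, with $b$ supported on $E_n$, the gradient flow splits as a product: on $E_n$ it is the (finite-dimensional) negative gradient flow of $f|_{E_n}(x)=\frac12\ip{Lx}{x}+b(x)$, and on $E_n^\perp$ it is the linear flow $\dot y=-L y$. Writing $E^- = E^-_{n}\oplus (E^-\cap E_n^\perp)$ according to the splitting, the key point is that $E$-cohomology is designed precisely so that, for such product flows, the linear factor on $E_n^\perp$ is ``divided out'': the $E$-cohomological Conley index of the product equals the ordinary (relative singular) cohomology of an index pair for the finite-dimensional factor $\psi$ in $E_n$. Concretely, an index pair $(N,L)$ for $\psi$ in $U_n = U\cap E_n$ gives, after thickening in the $E_n^\perp$ directions, an index pair for $\phi'$ in $U$; and the changed dimension axiom of $E$-cohomology (Appendix~\ref{app:ecoh}) identifies $H^*_E$ of this thickened pair with $H^*(N,L)$, the classical cohomological Conley index of $S_n = \inv(U_n,\psi)$ in finite dimensions. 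Thus $\ch^*(U,\phi') \cong H^*_{\mathrm{Conley}}(S_n,\psi)$.

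Finally I would invoke the finite-dimensional correspondence between local Morse (co)homology and the Conley index. Since $f|_{E_n}$ is Morse and Morse--Smale up to order $2$ on $U_n$, its local Morse cohomology $H^*(f|_{E_n},U_n)$ is isomorphic to the classical cohomological Conley index of the isolated invariant set $S_n$ in the gradient flow of $f|_{E_n}$; this is the standard fact underlying Morse theory for isolated invariant sets, proved e.g.\ in \cite{rotvandervorst} in the form we need. On the other hand, because the $E_n^\perp$-factor contributes a single index shift common to all critical points (the relative Morse index $m_{E^-}$ differs from the finite-dimensional Morse index on $E_n$ by the constant $\dim(E^-\cap E_n^\perp)$ up to the chosen reference), the Morse chain complex of $(f',U)$ is literally the Morse complex of $(f|_{E_n},U_n)$, with the same boundary operator — this is exactly the content of the local-moduli-space analysis in Section~\ref{sec:localMorse}, since all connecting orbits of $\phi'$ contained in $U$ lie in $E_n$. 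Hence $H^*(f',U)\cong H^*(f|_{E_n},U_n)\cong H^*_{\mathrm{Conley}}(S_n,\psi)\cong\ch^*(U,\phi')$, and chaining the isomorphisms back through the continuation gives $H^*(f,U)\cong\ch^*(U,\phi)$.

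The main obstacle I anticipate is the second step: making the identification ``$E$-cohomology of the thickened pair $=$ ordinary cohomology of the finite-dimensional index pair'' precise and natural enough to be compatible with the Morse-theoretic identification. One must check that the thickening of $(N,L)$ in the $E_n^\perp$ directions really produces a valid (ideally regular) index pair for $\phi'$ in $U$, and that the strong excision and changed-dimension axioms of $E$-cohomology recalled in Appendix~\ref{app:ecoh} apply to it with the correct grading shift; controlling the bookkeeping of the relative Morse index $m_{E^-}$ versus the $E$-cohomological degree, so that both chains of isomorphisms land in the same graded group, is where the real care is needed. The density and continuation reductions in the first step, by contrast, are routine given Corollary~\ref{cor:morse_invariance}, Proposition~\ref{prop:dense1}, and Theorem~\ref{thm:continuation}.
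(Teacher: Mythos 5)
Your strategy matches the paper's proof essentially step for step: reduce via Proposition~\ref{prop:dense1}, Corollary~\ref{cor:morse_invariance}, and Theorem~\ref{thm:continuation} to a function $f'$ with $b'$ of finite-dimensional support; exploit the product structure of the resulting gradient flow to pass to the finite-dimensional restriction on $E_n$; invoke the finite-dimensional local Morse homology $\cong$ Conley index correspondence from~\cite{rotvandervorst}; and then return to the $E$-cohomological index by identifying it with the cohomological index of the thickened isolating neighborhood $\tilde U=(U\cap E_n)\times B_1(E_n^\perp)$ using the shifted dimension axiom for $E$-cohomology. The paper carries out precisely this chain of isomorphisms.

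One detail in your bookkeeping is off, and it is exactly in the place you flag as needing care. You write that $m_{E^-}$ differs from the finite-dimensional Morse index on $E_n$ by the constant $\dim(E^-\cap E_n^\perp)$, but in the strongly indefinite setting this quantity is infinite: $E^-$ is infinite-dimensional and $E_n$ is finite-dimensional, so $E^-\cap E_n^\perp$ has finite codimension in $E^-$. The correct shift is $\dim(E_n\cap E^-)$, which is finite. Concretely, for a critical point $x\in E_n$ of $f'$ the negative eigenspace of the Hessian splits as $V^-_n\oplus(E^-\cap E_n^\perp)$ with $V^-_n\subset E_n$; since the $E_n^\perp$-factor of $V^-$ coincides with that of $E^-$, the relative dimension against $E^-$ reduces to $\dim V^-_n-\dim(E_n\cap E^-)$. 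This is the shift that appears in the paper's displayed isomorphism $H^*(f',U)\cong H^{*-\dim(E_n\cap E^-)}(f'\bigr|_{E_n},U\cap E_n)$ and in the identification $\mathrm{ch}^{*-\dim(E_n\cap E^-)}(U\cap E_n,\phi'\bigr|_{E_n})\cong\ch^*(\tilde U,\phi')$. With this correction the two chains of isomorphisms land in the same graded group, as required.
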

\begin{proof}
By Proposition~\ref{prop:dense1} we can find a function $f'=\frac{1}{2}\langle Lx,x\rangle+b'(x)$ with $b'$ finite-dimensional support satisfying (B1) and (B2) such that the gradient flows $\phi$ and $\phi'$ are related by continuation through gradient flows of functions satisfying (B1). By Corollary~\ref{cor:morse_invariance} we have that $H^*(f,U)\cong H^*(f',U)$. Note that the critical points and connecting orbits of $f'$ lie all in $E_n$. Hence we can restrict $f'$ to $E_n$ and compute the finite-dimensional Morse cohomology $f'\bigr|_{E_n}$. We get
$$
H^*(f',U)\cong H^{*-\dim(E_n\cap E^-)}(f'\bigr|_{E_n},U\cap E_n),
$$
where the right hand side denotes the ordinary finite-dimensional Morse homology. This latter group is isomorphic to the finite-dimensional Conley index $\mathrm{ch}^{*-\dim (E_n\cap E^-)}(U\cap E_n,\phi'\bigr|_{E_n})$. We refer to~\cite{rotvandervorst} for a discussion of this isomorphism in this setting. The set $\tilde U=(U\cap E_n)\times B_1(E_n^\perp)$ is also an isolating neighborhood of $\phi'$ and $\mathrm{ch}^{*-\dim(E_n\cap E^-)}(U\cap E_n,\phi'\bigr|_{E_n})\cong \ch^*(\tilde U,\phi')$. By Theorem~\ref{thm:continuation} we have $\ch^*(U,\phi)\cong \ch^*(U,\phi')$ and $\ch^*(U,\phi')\cong \ch^*(\tilde U,\phi')$. By composing the isomorphisms we obtain the required result. 
\end{proof}

\begin{remark}
\label{rem:morsenonmorse}
In the next section we will need the local Morse homology for functions that do not necessarily satisfy the   Condition (B2). Let $U$ be an isolating neighborhood of the gradient flow of $f$ satisfying Condition (B1) on $U$. By the density of functions satisfying (B2), cf.~Proposition \ref{prop:dense1}, and the openness of isolation, cf.~Proposition~\ref{prop:close_fields} we can always continue the gradient flow of $f$ to the gradient flow of a function $f^\alpha$ which satisfies both (B1) and (B2). We define
$$
H_*(f,U):=\varprojlim \{H_*(f^\alpha,U);\Phi_*^{\beta\alpha}\},
$$
where the limit\footnote{\label{foot:iso}Every homology group $H_*(f^\alpha,U)$ is isomorphic to each other. We use the inverse limit to keep track of the isomorphisms which is important when one considers functoriality in local Morse homology, cf.~\cite{Rot:2014ku}.} runs over all $f^\alpha$ satisfying (B1)-(B2) whose gradient flow is related by continuation to the gradient flow of $f$. The maps $\Phi_*^{\beta\alpha}$ used to define the limit are the maps induced by continuation between $f^\alpha$ and $f^\beta$ which are defined in~Corollary \ref{cor:morse_invariance}. Thus we have defined the local Morse homology of a function that is not necessarily Morse.
\end{remark}

\section{Lyapunov functions and Morse-Conley-Floer homology}
\label{sec:mcf}

In~\cite{rotvandervorst} Lyapunov functions were used in a finite-dimensional context to define an intrinsic homology theory for (arbitrary) flows with Morse homological methods. Here we discuss an infinite-dimensional analogue in the context of $\ls$-flows. 
\begin{definition}
Let $S$ be an isolated invariant set of an $\ls$-flow $\phi$. A smooth function $f:\mH\rightarrow \mR$ is a smooth Lypanov function for $(\phi,S)$ if there exists an isolating neighborhood $U$ of $S$ such that
\begin{enumerate}
\item[(i)] $f\bigr|_S=\text{constant}$
\item[(ii)] $\frac{d}{dt}\bigr|_{t=0}f(\phi(t,x))<0$ for all $x\in U\setminus S$.
\end{enumerate}
Such a function $f$ is also referred to as a Lyapunov function for $\phi$ on $U$. The set of all Lyapunov functions for $\phi$ on $U$ is denoted $\lyap(\phi,U)$. If $f$ satisfies Condition (B1) in Section \ref{localMH} then $f$ is said to be an $\ls$-Lyapunov function. The set of $\ls$-Lyapunov functions for $\phi$ on $U$ is denoted by $\lyap_\ls(\phi,U)$. 
\end{definition}

The set of Lyapunov functions for $\ls$-flows is non-empty, but this does not directly imply that the set of all $\ls$-Lyapunov functions is non-empty. We conjecture it to be non-empty, but we will only need the following weaker statement is which we know is true by Reineck's Theorem~\ref{theorem:LSReineck}. In the continuation class of every $\ls$-flow there exist an $\ls$-flow which admits an $\ls$-Lyapunov function. 

Observe, by essentially the same arguments of Sections 2 and 3 of~\cite{rotvandervorst}, that if $U$ is an isolating neighborhood of $\phi$, then $U$ is an isolating neighborhood of the gradient flow of every $f\in \lyap_\ls(\phi,U)$. The set $\lyap_\ls(\phi,U)$ is convex which implies that the $\ls$-gradient flows of $f\in \lyap_\ls(\phi,U)$ are all related by continuation in $U$. 

We define Morse-Conley-Floer homology of an arbitrary isolated invariant set of an $\ls$-flow using Lyapunov functions as follows. 

\begin{definition}
Let $U$ be an isolating neighborhood of an $\ls$-flow $\phi$. Define the Morse-Conley-Floer homology to be
$$
HI_*(U,\phi):=\varprojlim\{H_*(f^\alpha,U);\Phi_*^{\beta\alpha}\}.
$$
The inverse limit runs over the local Morse homology of pairs $(f^\alpha,U)$, where $f^\alpha$ is an $\ls$-Lyapunov function of an $\ls$-flow $\psi^\alpha$ that is related by continuation to $\phi$ in $U$.
\end{definition}
The Morse-Conley-Floer homology is well-defined: Lyapunov functions are generally not Morse, but the local Morse homology of functions satisfying only (B1) is defined in Remark~\ref{rem:morsenonmorse}. Theorem~\ref{theorem:LSReineck} states that every $\ls$-flow is related by continuation to an $\ls$-gradient flow. An $\ls$-gradient flow clearly admits an $\ls$-Lyapunov function and we see that the set over which the inverse limit runs is non-empty. Any two $\ls$-gradient flows are related by continuation in $U$ through $\ls$-gradient flows and we obtain functorial continuation maps $\Phi_*^{\beta\alpha}$ between different choices $f^\beta$ and $f^\alpha$.

By the usual dualization process we obtain the Morse-Conley-Floer cohomology $HI^*$. The Morse-Conley-Floer cohomology is isomorphic to the $E$-cohomological Conley index. 

\begin{theorem}
Let $U$ be an isolating neighborhood of an $\ls$-flow $\phi$. Then 
$$
HI^*(U,\phi)\cong \ch^*(U,\phi).
$$
\end{theorem}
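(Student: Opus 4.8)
The plan is to reduce the statement to Theorem~\ref{theorem:isomorphism}, which already establishes that local Morse cohomology computes the $E$-cohomological Conley index for a function satisfying (B1) and (B2). By Theorem~\ref{theorem:LSReineck}, the $\ls$-flow $\phi$ is related by continuation in $U$ to an $\ls$-gradient flow $\psi^{\alpha}$ of a function $f^{\alpha}$ of the form $f^{\alpha}(x)=\frac12\ip{Lx}{x}-b^{\alpha}(x)$ with $b^{\alpha}$ of finite-dimensional support; in particular $f^{\alpha}$ satisfies (B1) on $U$ and is an $\ls$-Lyapunov function for $\psi^{\alpha}$. So $\psi^{\alpha}$ contributes to the inverse limit defining $HI^*(U,\phi)$, and by Theorem~\ref{thm:continuation} we have $\ch^*(U,\phi)\cong\ch^*(U,\psi^{\alpha})$.

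First I would treat the case where some admissible $f^{\alpha}$ in the inverse system happens to satisfy (B2) as well. Then $H^*(f^{\alpha},U)=HI^*(U,\psi^{\alpha})$ is literally the local Morse cohomology (no inverse limit subtlety, by footnote~\ref{foot:iso} all the groups in the system are canonically identified), and Theorem~\ref{theorem:isomorphism} gives $H^*(f^{\alpha},U)\cong\ch^*(U,\psi^{\alpha})\cong\ch^*(U,\phi)$. To handle a general $f^{\alpha}$ satisfying only (B1), I would invoke Remark~\ref{rem:morsenonmorse}: $H_*(f^{\alpha},U)$ is by definition the inverse limit over Morse-Smale perturbations $f^{\alpha\beta}$ of $f^{\alpha}$, each continued to $f^{\alpha}$ through gradient flows of functions satisfying (B1). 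Each such $f^{\alpha\beta}$ satisfies (B1) and (B2), so Theorem~\ref{theorem:isomorphism} applies to it, and Theorem~\ref{thm:continuation} plus Corollary~\ref{cor:morse_invariance} show that the continuation maps $\Phi_*^{\beta\alpha}$ on Morse homology correspond, under the isomorphisms of Theorem~\ref{theorem:isomorphism}, to the identity (continuation) maps on $\ch^*$. Passing to the inverse limit gives $HI^*(U,\psi^{\alpha})\cong\ch^*(U,\psi^{\alpha})$.

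Next I would verify that the big inverse limit defining $HI^*(U,\phi)$, which runs over \emph{all} $\ls$-Lyapunov functions $f^{\gamma}$ of \emph{all} $\ls$-flows $\psi^{\gamma}$ continued to $\phi$, is canonically isomorphic to $\ch^*(U,\phi)$. The point is that $\ls$-gradient flows related by continuation (in particular all the $\psi^{\gamma}$ appearing here, since the continuation class of $\phi$ is connected and $\lyap_\ls(\psi^{\gamma},U)$ is convex) have canonically isomorphic $\ch^*$, and under the isomorphisms coming from Theorem~\ref{theorem:isomorphism} the transition maps $\Phi_*^{\delta\gamma}$ in the $HI$-system become the canonical continuation isomorphisms on $\ch^*$. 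Hence the whole system maps isomorphically to a constant system with value $\ch^*(U,\phi)$, so the inverse limit is $\ch^*(U,\phi)$. Dualizing, which is harmless over $\mathbb{Z}_2$, yields $HI^*(U,\phi)\cong\ch^*(U,\phi)$.

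The main obstacle is the bookkeeping of naturality: one must check that the isomorphism in Theorem~\ref{theorem:isomorphism} is natural with respect to continuation, i.e.\ that the square relating $\Phi_*^{\beta\alpha}$ (Morse continuation) to the $\ch^*$-continuation isomorphism (Theorem~\ref{thm:continuation}) actually commutes, not just that both vertical maps are isomorphisms. This requires unwinding the construction in Theorem~\ref{theorem:isomorphism} — restricting $f'$ to $E_n$, the dimension shift by $\dim(E_n\cap E^-)$, and the identification $\ch^{*-\dim(E_n\cap E^-)}(U\cap E_n,\phi'|_{E_n})\cong\ch^*(\widetilde U,\phi')$ — and checking each of these steps is compatible with the finite-dimensional continuation maps, which in turn relies on the functoriality statements already recorded in Proposition~\ref{prop:continuation1} and Corollary~\ref{cor:morse_invariance} (and their finite-dimensional analogues in~\cite{rotvandervorst}). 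Once this compatibility is in hand the inverse limit argument is formal.
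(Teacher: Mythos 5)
Your proposal is correct and follows essentially the same route as the paper: use Reineck's theorem (Theorem~\ref{theorem:LSReineck}) to find an $\ls$-gradient flow $\psi^\alpha$ with $\ls$-Lyapunov function in the continuation class of $\phi$, observe that all transition maps in the inverse system defining $HI^*$ are isomorphisms (Footnote~\ref{foot:iso}) so the limit collapses to a single $H^*(f^\alpha,U)$, apply Theorem~\ref{theorem:isomorphism} to get $H^*(f^\alpha,U)\cong\ch^*_E(U,\psi^\alpha)$, and finish with the continuation principle (Theorem~\ref{thm:continuation}). Your extra care in unwinding the $(B1)$-only case via Remark~\ref{rem:morsenonmorse} and in flagging the naturality question is a sound elaboration of what the paper compresses, not a different argument.
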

\begin{proof}
Let $\psi^\alpha$ be an $\ls$-flow, which admits an $\ls$-Lyapunov function in $U$ and which is related by continuation to $\phi$ in $U$. All continuation maps occuring in the inverse limit in the definition of the Morse-Conley-Floer cohomology are isomorphisms, cf. Footnote~\ref{foot:iso}, hence we see that $HI^*(U,\phi)$ is isomorphic to the local Morse cohomology $H^*(f^\alpha,U)$. Theorem~\ref{theorem:isomorphism} states that $H^*(f^\alpha,U)$ is isomorphic to the $E$-cohomological Conley index $\mathrm{ch}_E^*(U,\psi^\alpha)$. The flow $\psi^\alpha$ is related by continuation to $\phi$ in $U$ thus $\mathrm{ch}_E^*(U,\psi^\alpha)$ is isomorphic to $\mathrm{ch}_E^*(U,\phi)$ by Theorem~\ref{thm:continuation}. Therefore $HI^*(U,\phi)\cong \mathrm{ch}_E^*(U,\phi)$.
\end{proof}

\section*{Acknowledgements}
M. Izydorek, T.O. Rot and M. Starostka were partially supported by DAAD and MNISW PPP-PL Grant no. 57217076. M. Starostka was also supported by National Science Centre grant UMO-2015/17/N/ST1/02527.

\appendix
\section{E-cohomology}
\label{app:ecoh}
In this appendix we recall the definition and some properties of $E$-cohomology. For the details we refer the reader to~\cite{abbondandolo1997new}.

Let $\mH$ be a separable real Hilbert space with a splitting $\mH = E^+ \oplus E^-$ into two closed subspaces. Let $X$ be closed and bounded subset of $\mH$. For a finite-dimensional subspace $V$ of $E^-$ put 
$$ X_V := X \cap( E^+\oplus V).$$
Suppose we have another subspace $W$ such that $W = V \oplus U$ where $\dim U = 1$. We orient $U$ by picking a vector $u_0$ and define the positive and negative parts of $X_W$ by
$$X_W^+ = \{w \in X_W\,|\, \ip{w}{u_0} \geq 0\}, \quad X_W^- = \{w \in X_W\,|\, \ip{w}{u_0} \leq 0\}.$$

\begin{figure}[b]
\begin{overpic}[width=0.5\textwidth]{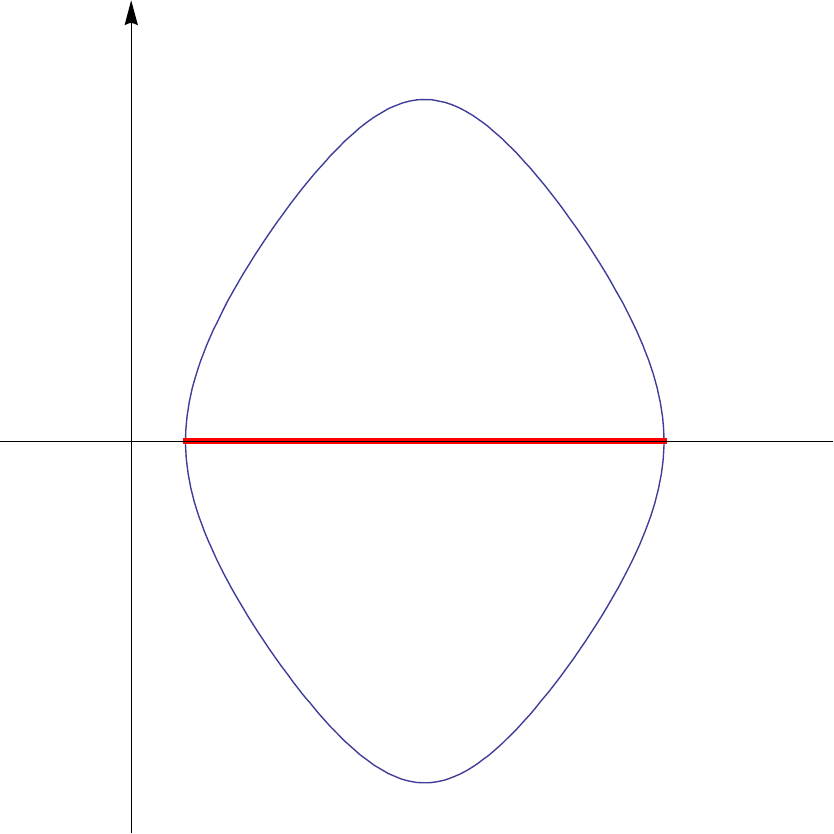}
 \put (9,94) {$U$}
 \put (95,40) {$V$}
 \put (50,50) {\textcolor{red}{$X_V$}}
 \put (57,70) {\textcolor{blue}{$X_W^+$}}
 \put (57,20) {\textcolor{blue}{$X_W^-$}}
 \put (70,85) {\textcolor{blue}{$X_W$}}
\end{overpic}
\caption{To compute the $E$-cohomology of a set $X$ we need to use the Mayer-Vietoris sequence of the triad $(X_W,X_W^+,X_W^-)$.}
\end{figure}
Note that $X_W^+ \cap X_W^- = X_V$ and therefore the Mayer-Vietoris sequence for a triad $(X_W,X_W^+,X_W^-)$ reads 
$$\ldots \rightarrow H^k(X_W^+) \oplus H^k(X_W^-) \to H^k(X_V) \xrightarrow{\Delta^k_{V,W}(X)} H^{k+1}(X_W) \to \ldots $$
Following Abbondandolo we use Alexander-Spanier cohomology above, but any cohomology theory satisfying the strong excision axiom would work.
\begin{definition}\label{def:HqE}
The $q$-th $E$-cohomology group of $X$ is defined as the direct limit
$$H^q_E(X) = \varinjlim_{V \subset E^-, \dim V < \infty} \{H^{q+\dim V}(X_V); \Delta_{V,W}(X)\}.$$
Analogously, we define $E$-cohomology groups $H^q_E(X,A)$ of closed and bounded pairs $(X,A)$.
\end{definition}
The $E$-cohomology groups are \textit{middle-dimensional}: Let $V$ be a subspace of $\mH$ such that the $E$-dimension of $V$
$$p = \dime V := \dim (V \cap E^+) - \codim (V + E^+)$$
is finite and let $X = S(V)$ be a unit sphere in $V$. Direct computations lead to
$$H^{q}_E(X) = \left\{
  \begin{array}{ll}
    \coef & \hbox{if $q = p - 1$;} \\
    0 & \hbox{otherwise.}
  \end{array}
\right.$$

Having defined $E$-cohomology groups, let us recall the definitions of the $E$-morphisms.
\begin{definition}
Let $(X,A)$ and $(Y,B)$ be closed and bounded pairs.
A continuous map $\Psi:(X,A) \to (Y,B)$ is an $E$-morphism  if it has the form 
$$\Psi(x) = Mx + U(x)$$
where $U:X \to \mH$ maps bounded sets into precompact sets and $M$ is a linear automorphism of $\mH$ such that $ME^- = E^-$.
\end{definition}

\begin{remark}\label{remark:extend}
In fact, the above class of morphisms can be extended to the class of continuous maps $\Psi\:(X,A)\to (Y,B)$ of the form $\Psi(x)=M(x) x+U(x)$, where $U$ is as above, $M:X\rightarrow GL(\mH)$ has compact image and $M(x)E^-=E^-$.

\end{remark}

As usual, an $E$-homotopy is a homotopy in the $E$-category. The $E$-cohomology satisfies generalized Eilenberg-Steenrod axioms. To be more precise, it satisfies the axioms with the set of morphisms restricted to $E$-morphisms and with the dimension axiom stating that the unit sphere in $E^-$ has non-trivial cohomology exactly in dimension $q = -1$. Directly by the homotopy invariance of the $E$-cohomology we have the following two lemmas. The maps are sometimes referred to as \textit{deformation retracts in the weak sense}.

\begin{lemma} \label{lemma:deformation_retract_2}
  Let $A\subset Y\subset X$ and let $\Phi\:[0,1]\times(X,A)\to(X,A)$ be an $E$-homotopy, such that $\Phi(0,\,\cdot\,)=\mathrm{id}_{(X,A)}$, $\Phi(1,X)\subset Y$ and $\Phi(\lambda,Y)\subset Y$ for all $\lambda\in[0,1]$ . Then the inclusion $(Y,A)\hookrightarrow(X,A)$ induces an isomorphism $\HE{*}{X}{A}\xrightarrow{\cong}\HE{*}{Y}{A}$.
\end{lemma}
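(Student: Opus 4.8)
The plan is to realize the inclusion $i\:(Y,A)\hookrightarrow(X,A)$ as an $E$-homotopy equivalence and then invoke the homotopy invariance axiom for $E$-cohomology recalled in Appendix~\ref{app:ecoh}. The homotopy inverse will be the weak retraction $r:=\Phi(1,\,\cdot\,)$.

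First I would check that $r$ is a genuine $E$-morphism of pairs $(X,A)\to(Y,A)$. Since $\Phi$ is an $E$-homotopy $[0,1]\times(X,A)\to(X,A)$, its time-$1$ slice $\Phi(1,\,\cdot\,)$ is an $E$-morphism $(X,A)\to(X,A)$; the hypothesis $\Phi(1,X)\subset Y$ together with $\Phi(1,A)\subset A\subset Y$ (the latter because $\Phi$ is a homotopy of pairs) shows that $r$ corestricts to an $E$-morphism $(X,A)\to(Y,A)$. Now $i\circ r$ is just $\Phi(1,\,\cdot\,)$ regarded as a self-map of $(X,A)$, hence is $E$-homotopic to $\mathrm{id}_{(X,A)}$ via $\Phi$ itself, because $\Phi(0,\,\cdot\,)=\mathrm{id}_{(X,A)}$. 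For the other composite I would restrict $\Phi$ to $[0,1]\times Y$: the conditions $\Phi(\lambda,Y)\subset Y$ and $\Phi(\lambda,A)\subset A$, together with $A\subset Y$, guarantee that this restriction is an $E$-homotopy of pairs $(Y,A)\to(Y,A)$ --- precomposing the nonlinear part with the subset inclusion $Y\hookrightarrow X$ preserves both the $E$-morphism form and the property of sending bounded sets to precompact sets --- running from $\mathrm{id}_{(Y,A)}$ at time $0$ to $r\circ i$ at time $1$.

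With these two $E$-homotopies available, homotopy invariance of $H^*_E$ yields $H^*_E(i)\circ H^*_E(r)=H^*_E(r\circ i)=\mathrm{id}$ on $\HE{*}{Y}{A}$ and $H^*_E(r)\circ H^*_E(i)=H^*_E(i\circ r)=\mathrm{id}$ on $\HE{*}{X}{A}$, so the induced map $H^*_E(i)\:\HE{*}{X}{A}\to\HE{*}{Y}{A}$ is an isomorphism with inverse $H^*_E(r)$. I do not expect any real obstacle: the only delicate bookkeeping is distinguishing maps of \emph{pairs} from maps of spaces and noting that the restricted homotopy still lands in $(Y,A)$, which is exactly what the three standing hypotheses on $\Phi$ provide.
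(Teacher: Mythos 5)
Your proof is correct and is essentially the argument the paper intends: the paper states this lemma without proof, remarking only that it follows "directly by the homotopy invariance of the $E$-cohomology," and your write-up is precisely the standard weak-deformation-retract argument (homotopy inverse $r=\Phi(1,\cdot)$, with the two composites controlled by $\Phi$ and by its restriction to $[0,1]\times Y$). The points you flag — that corestricting to $Y$ and restricting the homotopy to $Y$ preserve the $E$-morphism form — are exactly the right things to check, and they hold.
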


\begin{lemma} \label{lemma:deformation_retract}
  Let $A\subset Y\subset X$ and $\Phi\:[0,1]\times(X,Y)\to(X,Y)$ be an $E$-homotopy, such that $\Phi(0,\,\cdot\,)=\mathrm{id}_{(X,Y)}$, $\Phi(1,Y)\subset A$ and  $\Phi(\lambda,A)\subset A$ for all $\lambda\in[0,1]$. Then the inclusion $(X,A)\hookrightarrow(X,Y)$ induces an isomorphism $\HE{*}{X}{Y}\xrightarrow{\cong}\HE{*}{X}{A}$.
\end{lemma}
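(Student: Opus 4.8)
The plan is to exhibit an explicit $E$-homotopy inverse of the map induced by the inclusion $i\colon(X,A)\hookrightarrow(X,Y)$, built from the time-one slice of $\Phi$, and then to appeal to the homotopy invariance of $E$-cohomology inside the $E$-category recorded in Appendix~\ref{app:ecoh}. Since everything in sight is formal once the hypotheses are unpacked, the proof will be short.

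Concretely, because $\Phi(1,X)\subset X$ and $\Phi(1,Y)\subset A$, the assignment $r(x):=\Phi(1,x)$ defines a map of pairs $r\colon(X,Y)\to(X,A)$; being a time-slice of the $E$-homotopy $\Phi$ it has the form $Mx+U(x)$ demanded of an $E$-morphism, so $r$ is an $E$-morphism. The inclusion $i$ is trivially an $E$-morphism, with linear part the identity. I would then verify two homotopy relations. The composite $i\circ r\colon(X,Y)\to(X,Y)$ is the map $x\mapsto\Phi(1,x)$, and $\Phi$ itself --- which by hypothesis is an $E$-homotopy $[0,1]\times(X,Y)\to(X,Y)$ with $\Phi(0,\cdot)=\mathrm{id}$ --- is an $E$-homotopy from $\mathrm{id}_{(X,Y)}$ to $i\circ r$. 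For the composite $r\circ i\colon(X,A)\to(X,A)$, which is again the map $x\mapsto\Phi(1,x)$ but now read on the subpair $(X,A)$, I restrict $\Phi$ to $[0,1]\times(X,A)$: this is well defined as a map into $(X,A)$ precisely because $\Phi(\lambda,A)\subset A$ for all $\lambda$ (and $\Phi(\lambda,X)\subset X$), it inherits the $E$-homotopy structure, and it realizes $\mathrm{id}_{(X,A)}\simeq_E r\circ i$.

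By homotopy invariance and functoriality of $E$-cohomology, these two relations give $H^*_E(r)\circ H^*_E(i)=\mathrm{id}$ on $\HE{*}{X}{Y}$ and $H^*_E(i)\circ H^*_E(r)=\mathrm{id}$ on $\HE{*}{X}{A}$. Hence $H^*_E(i)\colon\HE{*}{X}{Y}\to\HE{*}{X}{A}$, the map induced by the inclusion, is an isomorphism with inverse $H^*_E(r)$, which is the assertion.

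The only delicate point --- and it is bookkeeping rather than a genuine obstacle --- is checking that corestricting $\Phi(1,\cdot)$ to the codomain $(X,A)$ and restricting $\Phi$ to the subpair $(X,A)$ keep everything inside the $E$-category, i.e.\ that the maps and homotopy produced are bona fide $E$-morphisms and $E$-homotopies of pairs. This is exactly what the hypotheses $\Phi(1,Y)\subset A$ and $\Phi(\lambda,A)\subset A$ guarantee, and no analysis beyond the formal Eilenberg--Steenrod-type properties of $E$-cohomology is required.
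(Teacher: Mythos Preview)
Your argument is correct and is exactly the approach the paper intends: the paper does not give a separate proof of this lemma but simply states that it follows ``directly by the homotopy invariance of the $E$-cohomology,'' and your construction of $r=\Phi(1,\cdot)$ together with the two $E$-homotopies $i\circ r\simeq_E\mathrm{id}_{(X,Y)}$ and $r\circ i\simeq_E\mathrm{id}_{(X,A)}$ is precisely the standard unpacking of that remark.
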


The $E$-cohomology satisfies also the following continuity property.

\begin{lemma}[Continuity property]\label{lemma:continuity}
Let $(X,A)$ be closed and bounded and let $\{U^m,V^m\}$ be a sequence of closed and bounded pairs such that
\begin{enumerate}
\item $U^m \subset U^n$ and $V^m \subset V^n$ if $n \leq m$;
\item $\bigcap_{m \in \mN}U^m = X$ and $\bigcap_{m \in \mN}V^m = A$.
\end{enumerate}
Then the direct limit 
$$\varinjlim_{m \in \mN} H^*_E(i^m): \varinjlim_{m \in \mN} \{H^*_E(U^m,V^m);H^*_E(j^{m,n})\} \to H^*_E(X,A)$$
is an isomorphism.
\end{lemma}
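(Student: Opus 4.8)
The plan is to deduce this from a continuity property of the underlying Alexander--Spanier cohomology $H^*$ by unravelling the definition of $E$-cohomology and interchanging two direct limits. Recall from Definition~\ref{def:HqE} that for a closed bounded pair $(Y,B)$ one has $H^*_E(Y,B)=\varinjlim_{V\subset E^-,\ \dim V<\infty}\{H^{*+\dim V}(Y_V,B_V);\Delta_{V,W}\}$, where $Y_V=Y\cap(E^+\oplus V)$ and the structure maps $\Delta_{V,W}$ are the Mayer--Vietoris connecting homomorphisms of the triads $(Y_W,Y_W^+,Y_W^-)$. So the group $H^*_E(U^m,V^m)$ is itself a direct limit over $V$ of Alexander--Spanier cohomology groups, and the idea is to carry out the convergence $m\to\infty$ inside that direct limit.

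First I would fix a finite-dimensional subspace $V\subset E^-$. Since intersecting with the fixed closed subspace $E^+\oplus V$ commutes with forming intersections, the pairs $\bigl(U^m\cap(E^+\oplus V),\,V^m\cap(E^+\oplus V)\bigr)$ form a nested decreasing sequence of closed bounded pairs in the metrizable (hence paracompact) space $E^+\oplus V$ whose intersection is $(X_V,A_V)$. I would then invoke the standard continuity property of Alexander--Spanier cohomology for nested sequences of closed subsets of a paracompact space, applied to the absolute groups, and promote it to pairs by applying the five lemma to the comparison of long exact sequences (using exactness of $\varinjlim$), concluding that the restriction maps induce isomorphisms
\[
\varinjlim_{m}H^{*+\dim V}\bigl(U^m\cap(E^+\oplus V),\,V^m\cap(E^+\oplus V)\bigr)\ \xrightarrow{\ \cong\ }\ H^{*+\dim V}(X_V,A_V).
\]

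Next I would check the naturality needed to promote this levelwise isomorphism to $E$-cohomology. The inclusions $i^m\colon(X,A)\hookrightarrow(U^m,V^m)$ and $j^{m,n}\colon(U^m,V^m)\hookrightarrow(U^n,V^n)$ for $n\le m$ have the form $x\mapsto\mathrm{Id}\cdot x+0$, so they are $E$-morphisms, and they restrict to maps of the relevant pairs and Mayer--Vietoris triads $\bigl(Y_W,Y_W^+,Y_W^-\bigr)$ (the defining inequality $\langle\cdot,u_0\rangle\ge 0$ is preserved). Naturality of the Mayer--Vietoris connecting homomorphism then shows that the restriction maps above commute with the structure maps $\Delta_{V,W}$. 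Hence the groups $H^{*+\dim V}\bigl(U^m\cap(E^+\oplus V),\,V^m\cap(E^+\oplus V)\bigr)$ assemble into a single direct system indexed by pairs $(V,m)$, the two iterated direct limits agree, and
\begin{align*}
\varinjlim_{m}H^*_E(U^m,V^m)
&=\varinjlim_{m}\varinjlim_{V\subset E^-}H^{*+\dim V}\bigl(U^m\cap(E^+\oplus V),\,V^m\cap(E^+\oplus V)\bigr)\\
&=\varinjlim_{V\subset E^-}\varinjlim_{m}H^{*+\dim V}\bigl(U^m\cap(E^+\oplus V),\,V^m\cap(E^+\oplus V)\bigr)\\
&=\varinjlim_{V\subset E^-}H^{*+\dim V}(X_V,A_V)=H^*_E(X,A),
\end{align*}
and unwinding the identifications shows the composite is exactly $\varinjlim_m H^*_E(i^m)$.

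The hard part will be the continuity input for the underlying Alexander--Spanier theory: it is needed for nested sequences of closed --- but in general non-compact --- bounded subsets of the infinite-dimensional space $E^+\oplus V$, which is delicate and must be quoted carefully (this is precisely the feature distinguishing Alexander--Spanier, equivalently \v{C}ech, cohomology from, say, singular cohomology, and it is why Abbondandolo's framework in~\cite{abbondandolo1997new} uses it). It is worth noting that the outer direct limit over $V\subset E^-$ tends to absorb the possible pathologies of the underlying theory in the strongly indefinite setting, which may be used to shore up this step if needed. Once Step~1 is granted, the interchange of direct limits is formal, the only remaining bookkeeping being the degree shifts $\dim V$ and the trivial verification that the inclusion maps are $E$-morphisms and triad maps.
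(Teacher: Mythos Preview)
The paper does not actually prove Lemma~\ref{lemma:continuity}: it is stated in the appendix as a property of $E$-cohomology, without proof, presumably to be read off from Abbondandolo's construction in~\cite{abbondandolo1997new} together with the standard continuity/tautness properties of Alexander--Spanier cohomology. So there is nothing to compare your argument against.

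That said, your approach is the natural one and is essentially how such a statement would be proved from the definition: unwrap $H^*_E$ as a direct limit over finite-dimensional $V\subset E^-$, invoke continuity of Alexander--Spanier cohomology at each level, and interchange the two filtered colimits. Your bookkeeping on $E$-morphisms and the naturality of the Mayer--Vietoris connecting maps is fine. The one point you correctly flag as delicate really is the crux: the continuity of Alexander--Spanier cohomology you need is for a \emph{decreasing sequence of closed, bounded but in general non-compact} subsets of the infinite-dimensional space $E^+\oplus V$, not the more familiar tautness statement for neighborhoods of a closed set in a paracompact space or the continuity for nested compacta. You should either locate this precisely in~\cite{abbondandolo1997new} (where the notion of $E$-local compactness is introduced to handle exactly this kind of issue), or reduce to tautness by showing that each $U^n\cap(E^+\oplus V)$ contains a neighborhood of $X_V$ in $U^1\cap(E^+\oplus V)$ so that the two direct systems are cofinal. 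Without that, the ``Step~1'' input is asserted rather than established.
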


\bibliographystyle{alpha}
\bibliography{ref}

\end{sloppypar}
\end{document}